%% file: main.tex
\documentclass[10pt]{article}
\usepackage[english]{babel}

\usepackage[margin=1in]{geometry}

\usepackage{mathtools}
\usepackage{amsmath}
\allowdisplaybreaks  
\usepackage{subcaption}

\newcommand{\Value}[2]{V_{#1}^{#2}} 

\usepackage[nocompress]{cite}

\usepackage{amsmath}
\usepackage{graphicx}
\allowdisplaybreaks
\usepackage[colorlinks=true, allcolors=blue]{hyperref}
\input{style}
\usepackage{smile}

\title{\vspace{-0.5in}Robust Markov Decision Processes on Continuous State Spaces}
\author{
Mengmeng Li \\ 
\small EPFL \\
\small \texttt{mengmeng.li@epfl.ch}
\and
Yifan Hu \\
\small Rutgers University \\
\small \texttt{yifan.hu@rutgers.edu}
\and
Daniel Kuhn \\
\small EPFL \\
\small \texttt{daniel.kuhn@epfl.ch}
\and
Yan Li\footnote{Correspondence to: Yan Li \texttt{<yan.li@tamu.edu>}.} \\
\small Texas A\&M University \\
\small \texttt{yan.li@tamu.edu}
}
\date{}

\begin{document}
\maketitle
\vspace{-0.2in}
\begin{abstract}

We study infinite-horizon robust Markov decision processes (MDPs) on continuous state spaces with structured rectangular ambiguity set. 
The proposed ambiguity set falls within the convex hull of unknown generating kernels. 
We utilize the dynamic formulation of the corresponding robust MDPs, and subsequently introduce a stochastic first-order method for robust policy evaluation.
We establish its high probability convergence to the robust value function, which in turn leads to an  $\tilde{\cO}(1/\epsilon^2)$ sample complexity.
This high probability accuracy certificate is then used in an approximate policy iteration method that finds an $\epsilon$-optimal policy with $\tilde{\cO}(1/\epsilon^2)$ samples. 
The obtained sample complexities for both robust policy evaluation and optimization appear to be new for robust MDPs with continuous state~spaces. 
Of independent interest, the proposed method is also directly applicable to zero-sum Markov games, which seems to strictly improve the existing sample complexities for continuous state~spaces. 
\end{abstract}

\input{intro}
\input{robust_mdp}
\input{robust_eval_outer}
\input{td_nature}
\input{robust_eval_meta}
\input{robust_opt}
\input{outro}

\bibliographystyle{plain}
\bibliography{references}
\input{appendix}

\end{document}

%% file: style.tex
\usepackage{algorithm}
\usepackage{algorithmic}
\usepackage{hyperref}
\usepackage{xcolor}

\usepackage[numbers]{natbib}
\setcitestyle{number}

\newcommand{\yan}[1]{{\color{black} #1}}

\newcommand{\revise}[1]{{\color{black} #1}}

\definecolor{violet}{RGB}{197, 27, 138}

\newcommand{\revm}[1]{{\color{black} #1}}
\usepackage[bottom]{footmisc}
\usepackage{dsfont} 
\usepackage{bbm} 
\usepackage{footnote}
\providecommand{\Call}[2]{\textsc{#1}(#2)} 

\usepackage{comment}
\usepackage{enumerate}

\usepackage{multicol}

\usepackage{tcolorbox}
\usepackage{amsthm}
\usepackage{amsmath}
\usepackage{amssymb}
\usepackage{algorithm}
\newtheorem{theorem}{Theorem}[section]
\newtheorem{lemma}[theorem]{Lemma}
\newtheorem{proposition}[theorem]{Proposition}
\newtheorem{remark}{Remark}
\newtheorem{definition}{Definition}

\newtheorem{assumption}{Assumption}
\newtheorem{condition}{Condition}
\newtheorem{example}{Example}

\DeclareMathOperator*{\argmin}{argmin}
\DeclareMathOperator*{\argmax}{argmax}

\newcommand{\R}{\mathbb{R}}
\newcommand{\mac}{\mathcal}

\newcommand{\diff}{\mathrm{d}}
\newcommand{\opt}{^\star}

\newcommand{\mb}{\mathbb}

\newcommand{\EE}{\mathbb{E}}
\newcommand{\PP}{\mathbb{P}}

\newcommand{\rw}{\mathrm{w}}

\usepackage{enumitem}
\newcommand{\Qsvp}[4]{(#4(\cdot|#2))^{\!\top} Q_{#1}(#2)\,#3(\cdot|#2)}
\newcommand{\hatQsvp}[4]{(#4(\cdot|#2))^{\!\top} \hat Q_{#1}(#2)\,#3(\cdot|#2)}
\newcommand{\Qwsvp}[5]{(#5(\cdot|#3))^{\!\top} Q^{#1}_{#2}(#3)\,#4(\cdot|#3)}

\graphicspath{{figures/}}

\allowdisplaybreaks

%% file: intro.tex

\section{Introduction}

We consider an infinite-horizon robust Markov decision process (MDP), denoted by  $(\mathcal{S}, \mathcal{A}, \cP, r, \gamma)$, comprising a \revm{closed} Borel measurable state space $\mathcal{S}\subseteq \R^S$,
a finite action space~$\cA$, a continuous reward-per-stage function $r: \mathcal{S} \times \mathcal{A} \rightarrow \mathbb{R}$, and an ambiguity set $\cP\subseteq\cP_0$ of transition kernels $P: \mathcal{S} \times \mathcal{A} \rightarrow \Delta_{\cS}$, and a discount factor $\gamma\in(0,1)$.
\revm{Here, $\cP_0$ is the set of all transition kernels $P$ such that  $P(\cB|s,a)$ is a Borel-measurable function of $s $  for every $a\in\cA$ and every Borel set $\cB\subseteq\cS$, and $\Delta_{\cS}$ denotes the set of probability measures over $\cS$.}
We assume that 
$r(s,a) \in [0,1]$ for all $(s,a) \in \cS \times \cA$.
Let~$\Pi$ denote the set of all randomized stationary Borel measurable policies $\pi:\cS\to\Delta_{\cA}$. 
For any $\pi \in \Pi$ and $P \in \cP$,
the value function $V_\pi^P: \mathcal{S} \rightarrow \mathbb{R}$ is defined through
\begin{align}\label{def_nonrobust_value}
    \Value{\pi}{P}(s)=\mb E^P_\pi\left[\sum_{t=0}^{\infty} \gamma^t r(S_t, A_t) \mid S_0=s\right],
\end{align} 
where $\cbr{(S_t, A_t)}_{t \geq 0}$ is the Markov process with its law induced by $\pi$ and $P$ given $S_0 = s$ (\emph{cf}.\ Ionescu Tulcea Theorem \cite{tulcea1949mesures}),
and $\EE^P_\pi$ denotes the corresponding expectation. 
The robust value function  is then defined~as
\begin{align}\label{def_robust_value}
    V_\pi(s) = 
    \inf_{P \in \cP }  \Value{\pi}{P}(s).
\end{align}
We are interested in finding the optimal policy $\pi\opt$ of 
\begin{align}\label{def_rmdp_general}
    \sup_{\pi \in \Pi} V_\pi(s),
\end{align}
for a given initial state $s \in \cS$.\footnote{
The existence of $P\opt$ for \eqref{def_robust_value} and $\pi\opt$ for \eqref{def_rmdp_general} will be discussed in Appendix~\ref{sec_appendix}.
Consequently we will replace $\inf$ (resp.\ $\sup$) by $\min$ (resp.\ $\max$) going forward.}
Robust MDP problem~\eqref{def_rmdp_general} allows the controller to acknowledge potentially imprecise knowledge or possible deviation of the transition dynamics in MDPs, and instead seeks a robust policy that would perform reasonably well for any kernel $P \in \cP$.
The ambiguity set is typically constructed to include the set of plausible transition dynamics that conform closely with historical data or expert knowledge. 
For instance, it can be constructed from pre-collected real-world trajectories \cite{li2025towards, wiesemann2013robust}, or by varying key simulation parameters of digital simulators \cite{todorov2012mujoco, coumans2016pybullet, hubbs2020or}.
Of course, \eqref{def_rmdp_general} reduces to the {classical} non-robust MDP  when $\cP$ is a singleton.

\revm{Central to solving~\eqref{def_rmdp_general} is the robust policy evaluation subproblem~\eqref{def_robust_value}, for which the prevailing approach is to solve the following dynamic programming equation
\begin{equation}\label{robust-bellman-evaluation}
v_\pi(s)=\inf_{P\in\cP}\sum_{a\in\cA} \pi(a|s) \Bigg(r(s,a) + \gamma \int_{\cS} v_\pi(s')\diff P(s'|s,a)\Bigg).
\end{equation}
\yan{
The operator defined by the right hand side of above \eqref{robust-bellman-evaluation} is often referred to as the robust Bellman operator. Under standard continuity conditions on $r$ and $P \in \cP$, the robust Bellman operator is a contraction and hence has a unique fixed point that can be computed by fixed-point (value) iterations. 
}
It is worth noting that for a general ambiguity set $\cP$, the fixed point $v_\pi$ of~\eqref{robust-bellman-evaluation} does not coincide with the robust value function~$V_\pi$.
These two functions coincide under additional structural assumptions on $\cP$ commonly referred to as rectangularity~\cite{nilim2005robust, iyengar2005robust, wiesemann2013robust, le2007robust, goyal2018robust, li2023rectangularity}. In particular, $\mathrm{s}$-rectangularity assumes that ambiguity set factorizes across states:
\begin{equation}
\label{def_k_mixture_ambiguity}
   \mac P = \{ P\in\cP_0 \mid P(\cdot | s, \cdot) \in\mac P_s ~ \forall s\in\mac S\},
\end{equation}
where $\cP_s = \cbr{P(\cdot|s, \cdot): P \in \cP}$ is referred to as the statewise marginal.
Here $\mathrm{s}$-rectangularity is of particular interest to us, as  any robust MDP  with $v_\pi = V_\pi$ for all $\pi$ can be reformulated into an equivalent one with a $\mathrm{s}$-rectangular ambiguity set \cite{li2023rectangularity}.
Rectangularity certifies time consistency and the existence of optimal stationary Markovian policies, and gives efficient computational schemes for convex $\cP$ based on value iterations.
The importance of {rectangularity}  is further emphasized in \cite{wiesemann2013robust}, 
which shows that evaluating~$V_\pi$  for a fixed policy $\pi$ becomes NP-hard for non-$\mathrm{s}$-rectangular ambiguity set even for finite state spaces \revm{and when~$\cP$ is a polyhedron}.}

\yan{We briefly review the existing solution methods for solving \eqref{def_rmdp_general}.}
\revm{When~$\cP$, or a close approximation thereof, is described by algebraic equations whose parameters can be stored efficiently in memory}, then \eqref{def_rmdp_general} can be addressed with methods from robust dynamic programming,
including (approximate) policy and value iterations \cite{ho2021partial, pmlr-v151-panaganti22a, pmlr-v206-xu23h, ramesh2024distributionally, shi2023curious, nilim2005robust, iyengar2005robust, wiesemann2013robust,goyal2018robust, pmlr-v235-kumar24b}.
We refer to such solution techniques as model-based methods. 
On the other hand, when $\cP$ \revm{cannot be represented explicitly within the available memory budget,} one must resort to a model-free approach.
\revm{Such methods do not require storing the transition kernel and instead approximate expectations using samples.}
This, for instance, includes Q-learning-based method using dual representation of distributionally robust counterpart of Bellman operators \cite{pmlr-v206-wang23b, pmlr-v162-liu22a, panaganti2022robust}, 
or policy gradient methods \cite{wang2022policy, wang2023policy, li2022first, zhou2023natural, kumar2023policy} equipped with robust temporal difference learning for robust policy evaluation.
Complementary to the above lines of development,  solution methods for \eqref{def_rmdp_general} without relying on dynamic programming equations have also been recently studied in \cite{li2023policy, lin2024single, kumar2025dual}.

Despite the recent progress on robust MDPs, it should be noted that the aforementioned methods come with 
finite-time global convergence guarantees only for finite state spaces.
This can be largely attributed to two main reasons.
First, the ambiguity set $\cP$ becomes infinite dimensional for continuous state space.
\yan{In particular, as each state is associated with its transition probability distribution upon taking an action,  
saving~$\cP$ requires saving the transition distributions associated with uncountably many states, thus rendering the direct application of model-based methods impractical.} 
Second, existing model-free methods such as $Q$-learning~\citep{pmlr-v206-wang23b, pmlr-v162-liu22a, panaganti2022robust} or policy gradient \cite{wang2022policy, wang2023policy, li2022first, zhou2023natural, kumar2023policy} often require evaluation of the robust Bellman operator for every state, which is infeasible for continuous state spaces.
\yan{
A common way to handle continuous state spaces in non-robust MDPs is to approximate the value function with a parameterized function class, and perform the approximate (robust) Bellman update within this restricted space.}
However, such commonly used approximate computation schemes, namely, regression-based methods and fixed-point iterations in a parameterized function space, can easily break down for robust MDPs \cite{li2023first,tamar2014scaling}. 
In particular, regression-based methods minimize the robust Bellman residual of \eqref{robust-bellman-evaluation}, and hence need to consider a non-convex objective due to nonlinearity of the robust Bellman operator \cite[Section~3.2]{tamar2014scaling}.
On the other hand, fixed-point iteration methods for continuous state spaces lose the contraction property associated with the Bellman operator in the robust setting, \yan{
as the fixed-point iteration therein is defined by the composition of the robust Bellman operator (contractive in $\cL_\infty$-norm) and the   projection operator  onto the function space in  $\cL^2$-norm.
}\footnote{Unless there is no approximation error for the value function. This is often referred to as the Bellman completeness condition (see, \emph{e.g.}, \cite{JMLR:v9:munos08a}).}
{
\yan{Consequently methods based on fixed-point iterations, in particular, temporal-difference learning that is widely used for non-robust MDPs, can exhibit divergence behavior even with linear approximation, unless with restrictive assumptions on $\cP$ and the discount factor.}
In particular,~\cite{tamar2014scaling} requires the existence of a nominal kernel $P_{\mathrm{nom}} \in \cP$ such that $\sup_{s \in \cS, a \in \cA, P \in \cP} \norm{P(\cdot|s,a)/P_{\mathrm{nom}}(\cdot|s,a)}_\infty < 1/\gamma$, and a divergence example of fixed-point iterations with linear approximations is constructed therein in the absence of this condition. 
 For a given $\cP$, the condition implicitly requires a small enough discount factor, and it remains unclear how this condition can be efficiently verified. 
Indeed, \cite{zhou2023natural} establishes that this condition can even fail to hold for  $\phi$-divergence-based ambiguity sets regardless of the discount factor. Subsequently, \cite{zhou2023natural} proposes two alternative classes of structured ambiguity sets, but both require $\cP$ to have a small diameter of $\cO(1-\gamma)$ to ensure the contraction property of the corresponding fixed-point operator.}
To the best of our knowledge, 
there seems to be no existing method that can achieve global convergence for robust MDPs on continuous state spaces, without {making restrictive assumptions on the discount factor or the diameter of~$\cP$.}

In this paper, we show that by exploiting structural properties of $\cP$, one can design globally convergent computational schemes for solving \eqref{def_rmdp_general}, even if the state space $\cS$ is continuous.
\revm{
The considered structural properties build upon the observation that  
for any ambiguity set $\cP$, whenever the solution $v_\pi$ of dynamic programming equation \eqref{robust-bellman-evaluation} coincides with $V_\pi$ defined in~\eqref{def_rmdp_general}, then there must exist a counterpart of~\eqref{def_rmdp_general} that has the same value function and optimal policy,  with an ambiguity set that is the  convex and $\mathrm{s}$-rectangular hull of $\cP$~\citep{li2023rectangularity}.
\yan{Consequently any solution method for robust MDPs that utilizes dynamic programming equations \eqref{robust-bellman-evaluation} can be viewed as equivalently solving \eqref{def_rmdp_general} with a $\mathrm{s}$-rectangular ambiguity set, where the statewise marginal $\cP_s$ is the convex hull of its extreme points.
}
Motivated by this observation, we focus our attention to ambiguity sets that are $\mathrm{s}$-rectangular and further instantiate each $\cP_s$ as a convex mixture of $K$ generating kernels $P_k(\cdot|s,\cdot)$, \emph{i.e.,} $\cP_s=\{\sum_{k\in\cK} \rw_k  P_k(\cdot | s,\cdot) \mid \rw  \in W \subseteq \Delta_{[K]}\}$. 

Our mixture ambiguity sets also admit a dynamic game interpretation.
Specifically, we consider a two-player dynamic game in which the controller adopts a randomized Borel measurable policy $\pi:\cS\to\Delta_{\cA}$, while nature adopts a randomized Borel measurable policy $\omega:\cS\to W\subseteq\Delta_{[K]}$.
Let us denote by $\Omega$ the set of all feasible policies $\omega$ of the nature.
At any state $s \in \cS$, the controller chooses $a \sim \pi(\cdot|s)$ and nature chooses $k \sim \omega(\cdot|s)$ simultaneously and without observing each other's action.
The next state~$s'$ follows the distribution given by 
$P_k(\cdot | s,a),$
and the immediate reward (resp.\ cost) for the controller (resp.\ nature) is given by $r(s,a)$.
For any given nature's policy $\omega \in \Omega$, consider 
$P^\omega \in \cP$ defined as 
\begin{align*}
    P^\omega(\cdot|s, \cdot) = \sum_{k\in\cK} \omega(k|s) P_k(\cdot|s, \cdot)\quad \forall s \in \cS.
\end{align*}
The dynamic game formulation then seeks a solution to the associated max–min problem
\begin{align}\label{def_dynamic_game}
    \max_{\pi \in \Pi} \min_{\omega \in \Omega} V^\omega_\pi (s) .
\end{align}
It is also shown in~\citep{li2023rectangularity} that  \eqref{def_rmdp_general} with all existing rectangular ambiguity sets is indeed equivalent to the dynamic form~\eqref{def_dynamic_game}. 

The proposed mixture ambiguity set naturally arises from a variety of physics-based (stochastic) differential equations, and digital simulators used in supply chain management, portfolio optimization, and robotics \cite{li2021reinforcement, todorov2012mujoco, coumans2016pybullet, hubbs2020or, da2014stochastic}. The mixture ambiguity set naturally extends group distributionally robust optimization~\citep{hu2018does} to dynamic decision-making settings, while also providing a convenient model for the stochastic optimal control problems.
We will discuss these relevant examples in detail in Section~\ref{sec_ambiguity_set}.
}

For the considered ambiguity set $\cP$, we propose a model-free method that operates in continuous state spaces while using only $\tilde{\cO}(1/\epsilon^2)$ samples to solve both robust policy evaluation~\eqref{def_robust_value} and policy optimization \eqref{def_rmdp_general} to~$\epsilon$ accuracy.
\revm{Since~\eqref{def_rmdp_general} reduces to a non-robust MDP when all generating kernels coincide, the information-theoretic lower bound $\Omega(1/\epsilon^2)$ for non-robust MDPs~\citep[Theorem~5.5]{sidford2018near} applies, implying that our $\tilde{O}(1/\epsilon^2)$ dependence is optimal up to logarithmic factors.}
Notably, the proposed method only requires sample access to generating kernels,  without requiring knowing or saving~$\cP$ a priori, 
and can be implemented with a computational/memory budget that does not depend on the size of the state space.

\vspace{0.05in} 
Our contributions can be summarized as follows.

First,  for robust policy evaluation, we propose a stochastic first-order algorithm that can estimate the robust value function \eqref{def_robust_value} up to $\epsilon$ accuracy  with   $\cO(1/\epsilon^2)$ number of samples drawn from the generating kernels. 
In contrast to existing approaches for continuous-state robust MDPs, the proposed method does not require any additional artificial assumptions on the diameter of $\cP$ or the discount factor \cite{tamar2014scaling, zhou2023natural}, and converges globally in the presence of linear approximation, up to the function approximation error. 
Notably the accuracy certificate is stated in a high probability sense. 
\textit{En route}, we also establish the high probability convergence of temporal difference learning over unbounded domains.
\yan{
The techniques we develop here (see Proposition \ref{prop-bounded-TD}) can  be straightforwardly generalized to establish boundedness of gradient-based stochastic approximation methods over unbounded domain, which could be of independent interest. 
}

Second, for robust policy optimization, we adopt the dynamic viewpoint of formulation \eqref{def_rmdp_general} (see, \emph{e.g.}, \cite{li2023rectangularity}), and present an approximate version of policy iteration method for zero-sum Markov game over continuous state spaces.
Notably the presented policy iteration method takes the previously discussed robust policy evaluation method as a subroutine, and can be implemented efficiently for continuous state spaces.
In particular, the high probability accuracy certificate of robust policy evaluation appears to be essential for the global convergence for a class of policy iteration-based methods.
We further establish an $\tilde{\cO}(1/\epsilon^2)$ sample complexity for the approximate policy iteration method for finding an $\epsilon$-optimal policy of \eqref{def_rmdp_general}.

The obtained sample complexities of $\tilde{O}(1/\epsilon^2)$ for policy evaluation and optimization are order-wise optimal \cite{sidford2018near}. 
To the best of our knowledge, this seems to be the first computationally feasible method for solving continuous state space robust MDPs, with optimal global performance guarantees. 
The proposed framework also directly applies to infinite-horizon zero-sum Markov game, 
\revm{and to our knowledge, our sample complexity results of $\tilde{\cO}(1/\epsilon^2)$ strictly improve on the best known guarantees 
in Markov games with continuous state space~\cite{zhao2022provably}.}

The rest of the paper is organized as follows. 
{Section~\ref{sec_ambiguity_set} introduces the mixture ambiguity set considered in this manuscript and establishes its key structural properties. Section~\ref{sec:RPE} presents an algorithmic framework for robust policy evaluation and analyzes its iteration and sample complexities. Section~\ref{sec:RPO} then presents a method for robust policy optimization. Finally, we conclude in Section~\ref{sec:outro} and  discuss possible future~directions.}

\paragraph{Notation.} 
For any Borel measurable set $X\subseteq\R^d$, we use $\Delta_X$ to denote the set of probability measures defined over~$X$,
and
use $\cL^2(X, \nu)$ to denote the space of all Borel functions $f:X\to\mathbb R$ with $\mathbb E_{X\sim\nu}[ f(X)^2]<\infty$.
When set $X$ is finite, we use $\Delta_X$ to denote the probability simplex defined over $X$.
\revm{We denote the Kullback-Leibler divergence by
$\mathsf{D}(\rw',\rw)= \sum_{k \in \cK} \rw'_k \log \rbr{\rw'_k/\rw_k} $ for all $
\rw,\rw'\in\Delta_{\cK}.$}
For any $n > 0$,   $e_i$ denotes the $i$-th standard basis vector in $\RR^n$.
Finally, we denote $[k] = \cbr{1, \ldots, k}$ for any $k > 0$.

%% file: robust_mdp.tex

\section{Robust MDP with Mixture Ambiguity Set}\label{sec_ambiguity_set}

Throughout the rest of our discussions, we focus on the following structured ambiguity set $\cP$, which we term the mixture ambiguity set. 

\begin{definition}[Mixture Ambiguity Set]\label{def_mix_amb_set}
\revm{Consider a given finite set of transition kernels $\{P_k\}_{k\in\cK}$ where $\cK=[K]$ and $P_k(\cdot|s,a)$ is continuous in the weak topology with respect to $s$ for any $a\in\cA$. In addition, consider as well a given closed set $W \subseteq \Delta_{\cK}$.}
The mixture ambiguity set $\cP$  is the $\mathrm{s}$-rectangular set where $\cP_s$ is defined as
\begin{equation}~\label{def:simplex-Ps}
    \mac P_{s} = \left\{\sum_{k\in\cK} \rw_k  P_k(\cdot | s,\cdot) \mid \rw  \in W 
    \right\}.
\end{equation}
We refer to $\cbr{P_k}_{k\in\cK}$ as the generating kernels of $\cP$.
\end{definition}

Definition \ref{def_mix_amb_set} states that the transition probability of any kernel $ P \in \cP$ from any state $s$ falls within the convex hull of the transition probabilities corresponding to generating kernels.
\revm{If $W=\Delta_{\cK}$, then the ambiguity set is the smallest $\mathrm s$-rectangular and convex set that contains the $K$ generating kernels, where taking the convex hull and the $\mathrm s$-rectangular hull are interchangeable operations. Allowing \(W\) to be a strict subset of \(\Delta_{\cK}\) provides additional modeling flexibility. {In addition, Lemma~\ref{lemma-exist-opt-policy} shows that} the optimal policy~$\pi^\star$ of \eqref{def_rmdp_general} exists and does not depend on the initial state $s \in \cS$.
The ambiguity set~\eqref{def_mix_amb_set} can be viewed as a special case of the more general construction
\begin{equation*}
    \mac P_{s} = \left\{ \left(\sum_{k\in\cK} \rw_{k,a}  P_k(\cdot | s,a) \right)_{a\in\cA} \mid \rw  \in W \right\}
\end{equation*}
where the mixture weights are allowed to depend on the action~$a$. If $W \subseteq \Delta_{\cK}$, with the same weight vector shared across all actions, then this construction reduces to~\eqref{def:simplex-Ps}. On the other hand, if $W = (\Delta_{\cK})^A$, the generalized set coincides with the smallest $(s,a)$-rectangular convex ambiguity set containing the $K$ generating kernels. Intermediate choices $W \subseteq (\Delta_{\cK})^A$ provide additional modeling flexibility by allowing partial coupling of the mixture weights across actions. For clarity of exposition, we restrict attention to the simpler formulation above, although extending our results to this more general class of ambiguity sets is straightforward.}

Mixture ambiguity sets can be convenient for modeling continuous-state MDPs, where the transition kernel, despite being infinite dimensional, corresponds to physical systems that are determined by finite-dimensional parameters.
Notably, this subsumes
physics-based stochastic (partial) differential equations that are typically governed by a finite set of parameters with physical meaning (\emph{e.g.}, diffusion and damping coefficients \cite{da2014stochastic}).
{For instance, the motion of a robotic arm is often described by a second-order ordinary differential equation derived from Newtonian mechanics, where the unknown parameters include joint friction, actuator gains, and link masses that determine how applied torques translate into motion~\citep{todorov2012mujoco,siciliano2009robotics}.
These continuous-time differential equations are typically discretized (for instance, via Euler or Runge–Kutta schemes) before being modeled as a discrete-time MDP.} 
It also includes a variety of digital simulators used in robotics \cite{todorov2012mujoco, coumans2016pybullet},  portfolio optimization \cite{hubbs2020or, boyd2017multi}, and supply chain management \cite{hubbs2020or}.
For instance, in multi-echelon inventory control \cite{hubbs2020or}, the key simulation parameters governing the transition dynamics only involve the Poisson rate of exogenous random demand.

\revm{

We discuss here some potential constructions of $\cP$ for various applications. 
To facilitate our discussion, let us denote parameter space $\Theta \subseteq \RR^d$, and let $P_\theta$ be the transition kernel corresponding to parameter $\theta \in \Theta$.
\begin{example}[{Multi-echelon inventory control~\citep{hubbs2020or}}]\label{inventory-example}
The state $s=(x,y)\in\cS\subseteq\RR^{2E}$ consists of 
on-hand inventories (\emph{i.e.,} stock available) $x=(x_1,\ldots,x_E)$ and pipeline inventories (\emph{i.e.,} outstanding orders) $y=(y_1,\ldots,y_E)$, and the action
$a=(a_1,\ldots,a_E)\in\cA$ represents the ordering quantities. 
Exogenous customer demand $d\in\mathbb R_+^E$ is generated from a distribution $\PP_\theta$ parameterized by $\theta\in\Theta\subseteq\RR$, \emph{e.g.,} $\theta$ may represent the arrival rate of a single Poisson process.
Given $(s,a,d)$, the next state is given by $s' = F(s,a,d),$ where $F=(F_x,F_y)$ is defined through
\begin{align*}
(F_x(s,a,d))_i=x_i+ y_{i-1}- y_i- d_i, \quad (F_y(s,a,d))_i&=a_i \quad \forall i=1,\dots,E,
\end{align*}
with the convention $y_0 = 0$. This induces a transition kernel
\[P_\theta(\cB | s,a)=\PP_\theta(F(s,a,d)\in \cB)\]
for any Borel set $\cB\subseteq\cS.$
The one-period reward function $r(s,a)$ encodes holding, backlog, and ordering costs.
Because the parameter space $\Theta$ is low dimensional, one may consider discretizing the parameter space into $\hat{\Theta} = \cbr{\theta_k}_{k\in\cK}$, that is, consider a finite set of plausible demand intensities (\emph{e.g.}, low, normal, and peak seasons), and correspondingly let 
 $P_k = P_{\theta_k}$ and $W = \Delta_{\cK}$.
This robust MDP problem can be in some sense viewed as seeking the policy that is robust against every possible transition kernel within $\cbr{P_\theta: \theta \in \Theta}$.
\end{example}

\begin{example}
[Drone navigation under dynamic wind conditions~\citep{sydney2013dynamic}]\label{drone-example}
Consider a drone navigating in three-dimensional space.
The state $s = (p, v) \in \cS \subseteq \RR^{6}$ consists of the drone's position $p \in \RR^3$ and velocity $v \in \RR^3$, and the action $a \in \cA$ represents a high-level control input that specifies the desired translational acceleration. The per-step reward $r(s,a)$ encodes trajectory-tracking accuracy and energy consumption. Given the current state $s=(p,v)$ and action $a$, the next state $s'=(p',v')$ evolves according to
\begin{equation*}
    p' = p + v\,\Delta t, \quad
    v' = v + \bigl(a/m + g + w(\xi)\bigr)\Delta t + \sigma\zeta,
\end{equation*}
where $m$ is the drone mass, $g$ is the gravitational acceleration vector, \(w:\RR^d\to\RR^3\) maps an environmental parameter \(\xi\in\RR^d\) to a wind-induced acceleration disturbance, $\sigma > 0$ is a noise scale, and~$\zeta$ is an independent standard Gaussian noise vector. The parameter $\xi$ encodes characteristics such as wind speed, direction, turbulence intensity, and gust frequency and is sampled from a time-varying distribution $\rho_t$, reflecting the fact that wind conditions shift unpredictably during flight.
Conditioning on a realization~$\xi$, the transition from $(s,a)$ to $s'$ is governed by the Gaussian noise~$\zeta$, yielding a generating kernel $P_\xi(\cdot | s,a)$. Taking the expectation over $\xi \sim \rho_t$ produces the effective transition $\EE_{\xi \sim \rho_t}\bigl[P_\xi(\cdot | s, a)\bigr]$,
which is a mixture of the generating kernels $\{P_\xi\}_{\xi \in \RR^d}$.
In drone navigation, a nominal distribution \(\rho_{\mathrm{nom}}\) for the wind parameter \(\xi\) is often available from historical weather records, forecast models, or prior knowledge of the flight region. Yet actual wind conditions may evolve significantly during flight because of local turbulence, gusts, or other environmental effects, so that the true distribution \(\rho_t\) deviates from \(\rho_{\mathrm{nom}}\). To account for this uncertainty, we model nature as adversarially selecting \(\rho_t\) from a divergence ball around the nominal distribution:
\[
    \rho_t \in \cD
    = \bigl\{ \rho \in \Delta_{\RR^d} : \mathsf{D}_\phi(\rho, \rho_{\mathrm{nom}}) \leq \tau \bigr\},
\]
where $\mathsf{D}_\phi$ is a $\phi$-divergence and $\tau > 0$ controls the degree of robustness.
This induces a mixture ambiguity set
\begin{equation}\label{eq:soc-ambiguity-set}
    \cP_s^0
    = \Bigl\{
        \EE_{\xi \sim \rho}\bigl[P_\xi(\cdot \mid s, \cdot)\bigr]
        : \rho \in \cD
    \Bigr\}.
\end{equation}
\end{example}
}

\revm{ Example \ref{drone-example} above in fact is not specific to drone navigation. We next formalize it in a general stochastic optimal control framework and show that it naturally induces a mixture ambiguity set over transition kernels.
\begin{example}[Stochastic optimal control]\label{soc-example}
The stochastic optimal control model considers state transition according to
\begin{equation}\label{eq:soc-dynamics}
    s_{t+1} = f(s_t, a_t, \xi_t, \zeta_t),
\end{equation}
where $s_t \in \cS \subseteq \RR^S$ denotes the state,
$a_t \in \cA$ denotes the control action,
$\xi_t \in \RR^d$ denotes random physical parameter following potentially time-varying distribution $\rho_t$,
and $\zeta_t$ denotes the purely exogenous noise following distribution $\nu$.
For instance, in Example \ref{drone-example}, $\xi_t$ denotes the corresponding wind speed, while $\zeta_t$ corresponds to the additive Gaussian noise. 
Conditioning on a realization $\xi_t = \xi$,
the transition from $(s_t, a_t)$ to $s_{t+1}$
can be described by a kernel
\begin{equation}\label{eq:soc-generating-kernel}
    P_\xi(\cB \mid s, a)
    = \PP_{\zeta \sim \nu}\bigl(f(s, a, \xi, \zeta) \in \cB\bigr)
\end{equation}
for any Borel set $\cB\subseteq\cS.$
Taking the expectation over $\xi_t \sim \rho_t$,
the effective one-step transition kernel becomes the averaged kernel $\EE_{\xi \sim \rho_t}[P_\xi(\cdot \mid s, a)]$. Hence, to hedge against potential distribution shift of physical data, one can consider selecting $\rho_t$ from an ambiguity set  $\cD \subseteq \Delta_{\RR^d} $, and the corresponding family of admissible transition kernels takes the form
\begin{align*}
   \cP_s^0= \Bigl\{
        \EE_{\xi \sim \rho}\bigl[P_\xi(\cdot \mid s,a)\bigr]
        : \rho \in \cD
    \Bigr\},
\end{align*}
which is a mixture ambiguity set generated by the family \(\{P_\xi\}_{\xi \in \RR^d}\).
\end{example}
\begin{remark}
If the generating kernel family $\{P_\xi\}_{\xi \in \RR^d}$ is
indexed by a potentially continuous $\xi$,
the mixture ambiguity set \eqref{eq:soc-ambiguity-set} differs from Definition \ref{def_mix_amb_set} in that the latter assumes a finite number of generating kernels.
Nevertheless we can proceed as follows. 
Let $\xi^{(1)}, \ldots, \xi^{(K)}$ be i.i.d.\ samples following distribution $\rho_{\mathrm{nom}}$,
and consider
\begin{equation}\label{eq:soc-sampled-ambiguity}
    \cP_s
    = \biggl\{
        \sum_{k \in \cK} w_k \, P_{\xi^{(k)}}(\cdot \mid s, \cdot)
        : \mathsf{D}_\phi(w, \hat{\rho}_K) \leq \tau,
        \ w \in \Delta_\cK
    \biggr\},
\end{equation}
where $\hat{\rho}_K$ is the empirical distribution over $\{\xi^{(k)}\}_{k \in \cK}$.
Under Lipschitz continuity of the generating kernels $P_\xi(\cdot|s, \cdot)$ with respect to the state and by sample average approximation results for distributionally robust stochastic programming with $\phi$-divergences~\cite{li2026sample,shapiro2017distributionally},
solving the robust MDP~\eqref{def_rmdp_general} with
$\cP = \prod_{s \in \cS} \cP_s$
approximates the true problem~\eqref{def_rmdp_general} with ambiguity set
$\cP^0 = \prod_{s \in \cS} \cP_s^0$
up to an error of order $\cO(\mathrm{poly}(d)/\sqrt{K})$.
In particular, the required number of generating kernels $K$
depends polynomially on~$d$ and is \emph{independent} of the state space.
\end{remark}
The mixture ambiguity set~\eqref{def:simplex-Ps} offers several modeling advantages for robust MDPs.
A first advantage is its interpretability: the size and shape of the ambiguity set are directly determined by the dispersion among the generating kernels.
When the generating kernels are similar to one another, the resulting ambiguity set is tight and the worst-case policy is only mildly conservative; when they are more distinct, the ambiguity set expands accordingly and the robust policy becomes more conservative.
This interpretation is natural in applications such as multi-echelon inventory control~\citep{hubbs2020or} (\emph{cf.} Example~\ref{inventory-example}), where uncertainty can be represented by a small number of demand regimes, such as different demand intensity levels or seasonal patterns.
Each regime~$k$ specifies a demand model for the exogenous demand process $d$, which in turn induces a transition kernel $P_k(\cdot\mid s,a)$.
If these demand regimes generate similar kernels, then the decision maker faces limited model uncertainty; if they generate markedly different kernels, then the ambiguity set appropriately reflects a higher level of uncertainty.
\yan{This contrasts with commonly used approaches for constructing ambiguity sets in robust MDPs, which consider a probability ball centered  at some nominal kernel $\hat P$, with a radius that depends on whether uncertainty arises from estimation error or from deployment-time model misspecification.
}
If~$\hat P$ is estimated from transaction logs and deployment matches the logging environment, the radius can be calibrated to statistical error -- for example, of order $\tilde O(\sqrt{\log(1/\delta)/N})$ to obtain out-of-sample guarantees from $N$ observed transitions at confidence level $1-\delta$~\citep{wiesemann2013robust}.
If deployment involves regime shifts (\emph{e.g.}, season changes, promotions, supplier disruptions, or sim-to-real mismatch~\citep{tobin2017domain,peng2018sim}), then the radius must represent structural misspecification rather than sampling noise, and it is not identified by $N$ alone.
For instance, even if one knows an upper bound on the difference in the Poisson rate of exogenous demand between simulation and true demand distribution, it is unclear how to translate this knowledge into an appropriate radius of the corresponding Kullback-Leibler divergence ball centered at the data-generating kernel. This is because the same absolute difference in Poisson parameter can result in an orders-of-magnitude difference in Kullback-Leibler divergence.  

Second, it is commonly acknowledged that a major modeling benefit of $\mathrm{s}$-rectangular sets is that they allow one to capture dependence among the action-conditional transition kernels $\{P(\cdot\mid s,a)\}_{a\in\cA}$ through a proper construction of the marginal ambiguity set $\cP_s$, defined in \eqref{def:simplex-Ps}.
However, while such flexibility is desirable, it is often unclear how this cross-action coupling should be specified in a principled manner unless substantial prior knowledge of the underlying dynamics is available.
In particular, one should take caution not to impose artificially hard-coded coupling across actions when such dependence is not directly justified by the data-generating mechanism.
In applications such as multi-echelon inventory control, the coupling among $\{P(\cdot\mid s,a)\}_{a\in\cA}$ should be induced by the common latent demand mechanism rather than introduced manually.
A prior approach toward obtaining such coupling is to construct the marginal ambiguity set $\cP_s$ using statistical procedures, for example through super-level sets of the maximum likelihood estimator~\citep{wiesemann2013robust}.
Unfortunately, this approach requires the transition kernel to be sufficiently simple so that the analytical form of $P(\cdot\mid s,a)$ is available.
Recent approaches instead enforce coupling through aggregated constraints such as $\sum_{a\in\cA} \mathsf D(P(\cdot\mid s,a),\hat P(\cdot\mid s,a))\le \tau$~\citep{li2025near}, yet the induced cross-action coupling depends on the particular choice of aggregation and is not directly implied by the underlying demand mechanism.
In contrast, the mixture ambiguity set couples actions through the generating kernels $\{P_k\}_{k\in\cK}$, so the coupling is inherited from the modeled demand regimes rather than manually hard-coded, and it remains applicable even when the kernels have no analytical form available.}

We proceed to introduce some structural properties of nature's cost-minimizing MDP, which will prove useful in our ensuing algorithmic development. 
These properties arise naturally from the structure of the proposed mixture ambiguity set.

\begin{definition}
\label{def_nature_state_action_func}
    For any joint policy $(\pi, \omega) \in \Pi \times \Omega$, define the joint action-value function through
    \begin{align*}
        Q^\omega_\pi(s, a, k) = r(s,a) +  \gamma \int_{\cS} V^\omega_\pi(s') P_k(\diff s' | s, a) 
        \quad \forall (s,a,k) \in \cS \times \cA \times \cK.
    \end{align*}
Nature's action-value function associated with its policy $\omega\in\Omega$ is then defined by
\begin{align*}
    {H^\omega_\pi(s, k) = 
        \sum_{a \in \cA} \pi(a|s) Q^\omega_\pi(s,a,k)
    \quad \forall (s,k) \in \cS \times \cK.}
\end{align*}
\end{definition}

Let $\PP_\pi^\omega (\cdot|S_0=s_0)$ denote the probability law of the Markov process $\cbr{(S_t, A_t)}_{t \geq 0}$ induced by $\pi$ and $P^\omega$ given an initial state $S_0 = s_0$ (Ionescu Tulcea Theorem~\cite{tulcea1949mesures}).

\begin{definition}\label{def_discounted_visitation}
We define the discounted state visitation measure induced by joint policy $(\pi, \omega) \in  \Pi \times \Omega$ through 
\begin{align*}
d^\omega_{\pi}(\cB|s_0)=(1-\gamma) \sum_{t=0}^{\infty} \gamma^t \mb P^\omega_\pi\left(S_t \in \cB| S_0=s_0\right), 
\end{align*}
for any Borel set $\cB \subseteq \cS$.
\end{definition}
The following lemma then characterizes the difference of nature's value functions for any pair of its policies. 
\revm{It reduces the global value gap to the discounted expectation of local one-step Bellman update differences under the comparison occupancy measure.
}

\begin{lemma}\label{lemma:pdl}

For any nature's policies $\omega,\omega'\in\Omega$, and any $s\in\mac S$, we have
\begin{align}\label{eq_perf_dff_Q}
V^{\omega'}_\pi(s)-V^{\omega}_\pi(s)&=\frac{1}{1-\gamma} \int_{\cS} \!\left[
\sum_{k\in\cK} (\omega'(k|s') - \omega(k|s') )H^\omega_\pi(s',k)
\right]d^{\omega'}_\pi(\diff s'|s).
\end{align}
In addition, let 
\begin{align}\label{eq_def_psi}
\psi_\pi^\omega(s,\rw)= \gamma \int { V^\omega_\pi(s') (P^{\rw }_\pi- P^{\omega(s) }_\pi)( \diff s'|s) } \quad \forall \rw \in W.
\end{align}
We then have
\begin{align}\label{def_perf_diff_psi}
V^{\omega'}_\pi(s)-V^{\omega}_\pi(s) =\frac{1}{1-\gamma} \int_{\cS} \psi^{\omega}_\pi (s', \omega'(\cdot|s'))d^{\omega'}_\pi(\diff s'|s).
\end{align}
\end{lemma}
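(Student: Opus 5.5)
This is the performance difference lemma for nature's MDP, and I will prove it by the usual telescoping argument, treating nature as the sole decision maker with the controller's policy $\pi$ held fixed. Define the one-step kernel $P^{\rw}_\pi(\cdot|s) = \sum_{a}\pi(a|s)\sum_k \rw_k P_k(\cdot|s,a)$ for $\rw\in W$. Write $r_\pi(s)=\sum_a \pi(a|s) r(s,a)$.

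First I will record the Bellman equation $V^\omega_\pi(s) = r_\pi(s) + \gamma\int V^\omega_\pi(s')P^{\omega(s)}_\pi(\diff s'|s)$. It follows from the Markov property under the Ionescu Tulcea law $\PP^\omega_\pi$, and boundedness ($0\le V\le 1/(1-\gamma)$ since $r\in[0,1]$) guarantees that all integrals are finite. Directly from Definition~\ref{def_nature_state_action_func}, linearity of the mixture in $k$ gives
\[
\sum_k \rw_k H^\omega_\pi(s,k) = r_\pi(s) + \gamma\int V^\omega_\pi(s')P^{\rw}_\pi(\diff s'|s).
\]
In particular $\sum_k \omega(k|s)H^\omega_\pi(s,k)=V^\omega_\pi(s)$. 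Hence
\[
\sum_k(\rw_k-\omega(k|s))H^\omega_\pi(s,k)=\psi^\omega_\pi(s,\rw).
\]
This identity shows that \eqref{eq_perf_dff_Q} and \eqref{def_perf_diff_psi} are equivalent, so it suffices to prove one of them.

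For the main step, let $\{S_t\}$ follow $\PP^{\omega'}_\pi(\cdot|S_0=s)$, and write $\EE'$ for expectation under this law. I will write
\[
V^{\omega'}_\pi(s) = \EE'\sum_t \gamma^t r_\pi(S_t)
\]
and telescope with $V^\omega_\pi$:
\[
V^{\omega'}_\pi(s)-V^\omega_\pi(s)=\EE'\sum_{t\ge0}\gamma^t\big[r_\pi(S_t)+\gamma V^\omega_\pi(S_{t+1})-V^\omega_\pi(S_t)\big].
\]
This is valid because $\gamma^t V^\omega_\pi(S_t)\to0$ uniformly by boundedness. Conditioning on $S_t$ via the tower property, the distribution of $S_{t+1}$ given $S_t$ is $P^{\omega'(S_t)}_\pi(\cdot|S_t)$. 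So the bracket's conditional expectation is
\[
r_\pi(S_t)+\gamma\int V^\omega_\pi \,\diff P^{\omega'(S_t)}_\pi(\cdot|S_t)-V^\omega_\pi(S_t)=\psi^\omega_\pi(S_t,\omega'(\cdot|S_t)),
\]
using the Bellman equation for $V^\omega_\pi$.

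Finally, I will exchange sum and expectation. This is justified by dominated or monotone convergence, since $|\psi|\le 2\gamma/(1-\gamma)$ and the series is geometric. It gives
\[
\sum_t\gamma^t\EE'[\psi(S_t,\cdot)]=\frac{1}{1-\gamma}\int\psi^\omega_\pi(s',\omega'(\cdot|s'))\,d^{\omega'}_\pi(\diff s'|s),
\]
by Definition~\ref{def_discounted_visitation}. This proves \eqref{def_perf_diff_psi}, and \eqref{eq_perf_dff_Q} follows from the identity above.

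The only delicate point is measure-theoretic. I need Borel measurability of $s\mapsto V^\omega_\pi(s)$ and of $s\mapsto\psi^\omega_\pi(s,\omega'(\cdot|s))$, so that the conditional expectations and the integral against $d^{\omega'}_\pi$ are well defined. I will get this from measurability of $\pi,\omega,\omega'$ and of the kernels, together with the standard result that integrals of bounded measurable functions against measurable kernels are measurable.
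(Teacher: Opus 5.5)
Your proposal is correct and is essentially the paper's argument. The paper cites the performance difference lemma of \cite{kakade2002approximately} for nature's MDP to get \eqref{eq_perf_dff_Q}, then passes to \eqref{def_perf_diff_psi} via the identity $\sum_{k}(\rw_k-\omega(k|s))H^\omega_\pi(s,k)=\psi^\omega_\pi(s,\rw)$; you instead write out the telescoping proof of that lemma explicitly and run the same conversion in the opposite direction.
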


\begin{proof}
The proof of \eqref{eq_perf_dff_Q} follows directly from the performance difference lemma~\citep{kakade2002approximately} applied to the nature's cost minimizing MDP and Definition \ref{def_nature_state_action_func}.
In addition, \eqref{def_perf_diff_psi} follows from \eqref{eq_perf_dff_Q}, together with the definitions of $\psi^\omega_\pi$ in~\eqref{eq_def_psi} and $H^\omega_\pi$ in Definition~\ref{def_nature_state_action_func}.
\end{proof}

In the following sections, we proceed to introduce efficient solution methods for the robust policy evaluation~\eqref{def_robust_value} and optimization \eqref{def_rmdp_general} problems. 
Notably the proposed methods only require sampling access to the generating kernels $\cbr{P_k}_{k\in\cK}$.
This can be particularly convenient for implementation purposes, as simulating a random process  can be considerably easier than constructing or storing its distribution explicitly.

%% file: robust_eval_outer.tex


\section{Robust Policy Evaluation}\label{sec:RPE}

In this section, we turn our attention to the robust policy evaluation problem \eqref{def_robust_value}.
Given any to-be-evaluated policy $\pi$, as discussed in Section~\ref{sec_ambiguity_set}, 
it suffices to compute the optimal value function of nature's cost-minimizing MDP. 
With this in mind, we present in Algorithm~\ref{alg:SPDA} the details of the 
proposed stochastic method for robust policy evaluation.

\begin{algorithm}[b!]
\caption{Stochastic dual averaging for robust policy evaluation}
\label{alg:SPDA}
\begin{algorithmic}[1]
  \REQUIRE Controller's policy $\pi\in\Pi$, initial nature's policy $\omega^{(0)}\in\Omega$, number of iterations $M$, stepsizes $\cbr{\alpha_m}_{m =0}^M$ and regularization parameters $\cbr{\lambda_m}_{m=0}^M$.
  \FOR{$m = 0,1,2,\dots, M$}
      \STATE \label{line:td-oracle}
      Compute $\hat Q_{m} \approx Q^{\omega^{(m)}}_\pi$ 
      \revise{
      with $(\omega^{(m)}, \pi)$ and sample access to generating kernels $\cbr{P_k}_{k \in \cK}$ 
      } 
      \STATE Set 
\begin{align}\label{eq_estimate_V_H}
      \hat{V}_m (\cdot) = \sum_{a \in \cA} \sum_{k \in \cK} \omega^{(m)}(k|\cdot) \pi(a|\cdot)  \hat{Q}_m(\cdot,a, k), ~~ \text{and} ~~
          \hat{H}_m(\cdot, k) =  \sum_{a \in \cA} \pi(a|\cdot)  \hat{Q}_m(\cdot,a, k)
      \end{align}
    \STATE 
    Update $\omega^{(m+1)}(\cdot|s)$ through~\eqref{line:ftrl}
  \ENDFOR
  \RETURN
  $
     {\hat V_{\pi}}=\sum_{m=1}^{M} {\vartheta_m}\,\hat V_{{m}}$ and 
     $
     {\hat Q_{\pi}} = \sum_{m=1}^{M} {\vartheta_m}\,\hat Q_{{m}}$, where 
  $\vartheta_m=\alpha_m/(\sum_{j=1}^M \alpha_j)$ for $m=1,2,\ldots,M$
\end{algorithmic}
\end{algorithm}

\revise{
At any given iteration $m$, Algorithm \ref{alg:SPDA} operates by first estimating the nature's action-value function $H^{\omega^{(m)}}_\pi$. 
The estimation follows a two-step procedure, 
defined by first estimating the joint action-value function $Q^{\omega^{(m)}}_\pi$ given the joint policy $(\pi, \omega^{(m)})$, 
followed by estimating $H^{\omega^{(m)}}_\pi$ in view of Definition~\ref{def_nature_state_action_func}.
We will introduce a subroutine (Algorithm \ref{alg:TD}) in Section \ref{ssec:TD}  that provides an efficient estimation $\hat Q_m$ of $Q^\omega_\pi$ for any given $(\pi, \omega) \in \Pi \times \Omega$, by using samples drawn from the generating kernels $\cbr{P_k}_{k\in\cK}$.
To facilitate our ensuing discussion, we denote by~$\cD_m$ as the samples from generating kernels at iteration~$m$, and let $\cF_{m-1}$ denote the $\sigma$-algebra generated by $(\cD_0,\ldots,\cD_{m-1})$.
With the estimation of $H^{\omega^{(m)}}_\pi$ in place, Algorithm \ref{alg:SPDA} then updates the nature's policy using the dual-averaging-type update \cite{nesterov2009primal,ju2022policy},
\begin{align}\label{line:ftrl}
    \omega^{(m+1)}(\cdot|s) \gets \argmin_{\rw\in W}   \sum_{i=0}^{m}\alpha_{i} 
    \sum_{k\in\cK} \rw_k \hat{H}_{i}(s,k) 
+ \lambda_{m}h(\rw) ,
\end{align}
where the distance-generating function $h(\rw) = \sum_{k \in \cK} \rw_k \log(\rw_k)$ corresponds to the negative entropy function,
and $\hat{H}_i$ denotes the estimation of $H^{\omega^{(i)}}_\pi$, where $\alpha_m\ge 0$ are the stepsizes and $\lambda_m\ge0 $ are regularization parameters.

It is worth noting that from an implementation perspective, the policy update in~\eqref{line:ftrl} is an implicit, query-based representation of~$\omega^{(m+1)}$, rather than a full update over the entire state space.  In particular, when the algorithm needs to access~$\omega^{(m+1)}$ at a state~$s$, it suffices to compute the local distribution~$\omega^{(m+1)}(\cdot | s)$ on demand. There is therefore no need to explicitly compute or store~$\omega^{(m+1)}(\cdot | s)$ for every state~$s \in \cS$.  As a result, the state-space cardinality does not appear explicitly in the per-iteration computational complexity of Algorithm~\ref{alg:SPDA}. This will be further discussed within Section \ref{ssec:TD}.
}

\revise{
\begin{remark}
It should be noted here that a similar update has also been studied in~\cite{ju2022policy} for solving non-robust MDPs. 
Hence it is worth discussing some important differences between the development herein and \cite{ju2022policy}, detailed~below.
The method in \cite{ju2022policy} considers solving non-robust MDPs and focuses on controlling the bias of estimating the optimal value function.
In our context of robust policy evaluation \eqref{def_robust_value} and dynamic game introduced in Section \ref{sec_ambiguity_set}, this could only control the expected difference between the estimated robust value function $\hat{V}_\pi$ and the true robust value function $V_\pi$, \emph{i.e.},
$ \|\EE [\hat{V}_\pi] - V_\pi\|_\infty$.
On the other hand, for the purpose of policy evaluation, one would ideally seek an estimation accuracy in a high probability sense instead of only controlling the bias.
Indeed,  we will proceed to establish an $\tilde{\cO}(1/\varepsilon^2)$ sample complexity for obtaining
$ \|\hat{V}_\pi - V_\pi\|_\infty \leq \varepsilon$ with high probability.
Note the convergence guarantee on the bias $ \|\EE [\hat{V}_\pi] - V_\pi\|_\infty$ is even weaker than on the mean-squared error typically performed for non-robust policy evaluation \cite{kotsalis2022simple, bhandari2018finite},
which in turn is weaker than a high probability bound.
It turns out that this seemingly simple goal of boosting the accuracy certificate from bias to high probability creates technical challenges that will become evident as we proceed.
In addition, such a conversion does not seem to be amenable to existing techniques originally developed from stochastic convex optimization with a strongly convex objective \cite{davis2021low, nemirovskij1983problem,hsu2016loss}.
Perhaps most importantly, we will demonstrate in Section \ref{sec:RPO} that the high probability control on robust policy evaluation appears to be essential for robust policy optimization (Remark \ref{remark_high_prob}).
Indeed, it remains unclear to us whether controlling the bias for $\hat{V}_\pi$ would be sufficient for any approximate policy iteration-based framework applied to the robust policy optimization~\eqref{def_rmdp_general}. 
\end{remark}
}

Our subsequent discussion on the convergence of Algorithm \ref{alg:SPDA} requires the following condition 
on the stochastic error of 
$\hat{Q}_m$.
\begin{subequations}
\begin{condition}\label{assu-approx-H-V}
\revise{For $\delta \in (0,1)$, the estimator $\hat{Q}_m$ in Algorithm \ref{alg:SPDA} satisfies }  
    \begin{align}\label{assump_w}
        \|\EE_\pi^{\omega^{(m)}}[ \hat Q_m | \mac F_{m-1}] - Q^{\omega^{(m)}}_\pi \|_\infty \le \underline{\varepsilon}, ~~~ \| \hat Q_m \|_\infty \,\le B, \quad \EE_\pi^{\omega^{(m)}}\big[ \| \hat Q_m - Q^{\omega^{(m)}}_\pi\|_\infty^2 | \mac F_{m-1}\big] \le J
    \end{align}
for some $B, J, \underline{\varepsilon} > 0$
    with probability $1-\delta/(4M)$ for every $m=1,\ldots,M$, where $\{\omega^{(m)}\}_{m=1}^M$ are the iterates generated by Algorithm~\ref{alg:SPDA}.
\end{condition}

\revise{
It should be noted that the ensuing convergence characterization for Algorithm \ref{alg:SPDA} holds for arbitrary estimation accuracy $(B, J, \underline{\varepsilon})$ as defined in Condition \ref{assu-approx-H-V}.
Of course, the quality of the obtained $\hat{V}_\pi$ by Algorithm~\ref{alg:SPDA} would in turn depend on $(B, J, \underline{\varepsilon})$.
For a given stochastic subroutine that estimates $\hat{Q}_m$, one can also in principle determine its associated estimation accuracy.  
We will proceed with this approach in Section \ref{ssec:TD}, by introducing a concrete stochastic estimation subroutine (Algorithm \ref{alg:TD}) to build $\hat Q_m$ and subsequently determine its $(B, J, \underline{\varepsilon})$.
This will in turn help us determine the total sample complexity of Algorithm~\ref{alg:SPDA} in Section \ref{ssec:putting-it-together}.
It turns out that in order to establish high probability convergence,  both certifying Condition~\ref{assu-approx-H-V} with non-vacuous $(B, J, \underline{\varepsilon})$ and utilizing it within the analysis of Algorithm \ref{alg:SPDA} are nontrivial. 
\textit{En route}, we develop some new probabilistic tools for the analysis of Algorithm~\ref{alg:SPDA} and \ref{alg:TD}, 
which could be of independent interest for  general stochastic approximation methods. 
In comparison, \cite{ju2022policy} directly assumes the access to oracles for estimating $\hat{Q}_m$ and focuses on the weaker notion of convergence for the value function.  
}

From Definition \ref{def_nature_state_action_func} and Condition~\ref{assu-approx-H-V}, together with the observation that $\omega^{(m)}$ is $\cF_{m-1}$-measurable, we have for every $m=1,\ldots,M$ with probability $1-\delta/(4M)$ that 
\begin{align}
   &  \|\EE_\pi^{\omega^{(m)}}[ \hat V_m | \mac F_{m-1}] - V^{\omega^{(m)}}_\pi \|_\infty \le \underline{\varepsilon}, ~~~ \| \hat V_m \|_\infty \,\le B,  \quad \EE_\pi^{\omega^{(m)}}\big[ \| \hat V_m - V^{\omega^{(m)}}_\pi\|_\infty^2 | \mac F_{m-1}\big] \le J, \label{ineq_norm_bias_V}
   \end{align}
and
   \begin{align}
   &  \|\EE_\pi^{\omega^{(m)}}[ \hat H_m | \mac F_{m-1}] - H^{\omega^{(m)}}_\pi \|_\infty \le \underline{\varepsilon}, ~~~ \| \hat H_m \|_\infty \,\le B, \quad \EE_\pi^{\omega^{(m)}}\big[ \| \hat H_m - H^{\omega^{(m)}}_\pi\|_\infty^2 | \mac F_{m-1}\big] \le J. \label{ineq_norm_bias_H}
\end{align}
\end{subequations}
To facilitate our discussion, let us
denote 
\begin{align*}
    & \psi_m (s, \rw) = \sum_{k\in\cK} (\rw_k- \omega^{(m)}(k|s)) H^{\omega^{(m)}}_\pi(s,k), 
    \\
    & \hat{\psi}_m(s,\rw) = \sum_{k\in\cK} (\rw_k- \omega^{(m)}(k|s))  \hat{H}_m(s,k), \\
    & \hat{\Psi}_m(s,\rw)  = \sum_{i=0}^m \alpha_{i} \hat{\psi}_{i} (s, \rw). 
\end{align*}
\revise{One can readily verify that $\psi_m(s,\rw)$ represents the difference in nature's true action-value function at state~$s$ between its current policy at iteration $m$ and any alternative state-wise policy $\rw \in W$, while $\hat{\psi}_m(s,\rw)$ denotes its sample approximation. As will be clarified later,  $\hat{\Psi}_m(s,\rw)$ can be understood as a proxy for the cumulative progress of Algorithm \ref{alg:SPDA}.
}
Note that from Definition~\ref{def_nature_state_action_func} and~\eqref{eq_def_psi}, it is clear that 
$\psi_m = \psi^{\omega^{(m)}}_\pi$.
In addition, let us define the stochastic error
\begin{align*}
    \delta_m  = \hat{\psi}_m  - \psi_m  .
\end{align*}
We now discuss an immediate consequence of Condition \ref{assu-approx-H-V}. 
In particular, in the ensuing Lemma \ref{lemma:weighted-value-errors}, we show that the accumulated stochastic errors in Algorithm~\ref{alg:SPDA} with $M$ total iterations grow at the order of 
$\cO(\sqrt{M})$ with high probability.
Its key ingredient, Lemma \ref{lemma-fixed-prop34} (Appendix \ref{sec_appendix}),  generalizes the Azuma-Hoeffding inequality in the sense that it does not require boundedness of stochastic errors almost surely.
\revise{
It appears to us that this technique might also be applied in the general context of stochastic approximation algorithms with potentially unbounded stochastic updates. }

\begin{lemma}
\label{lemma:weighted-value-errors}
Suppose that Condition~\ref{assu-approx-H-V} holds, and set $\alpha_m=\sqrt{m}$ for $m\le M$. In addition, let $\bar B=B+\frac{1}{1-\gamma}$ and denote by $\omega^\star_\pi$ the optimal policy for the inner minimization problem in \eqref{def_dynamic_game}. For any $\varepsilon\ge \underline{\varepsilon}$, the following hold with probability at least $1-\delta/2$ for all $(s,a,k)\in\cS\times\cA\times\cK$.
\begin{enumerate}[label = (\roman*)]
\item \label{item-vartheta-Vhat}
$\displaystyle\left|\sum_{m=1}^M \vartheta_m(\hat V_m(s)-V_\pi^{\omega^{(m)}}(s))\right| 
\leq \varepsilon + \frac{3 \bar B}{\sqrt{M}} \sqrt{\log \left(\frac{8 M}{\delta}\right)} + \sqrt{\frac{\revise{J}\delta}{M}}   $;
\item \label{item-vartheta-Hhat}
$\displaystyle\sum_{m=1}^M \!\frac{\vartheta_m}{1-\gamma} \int_{\cS}\delta_m(s^{\prime}, \omega^\star_\pi(\cdot|s^{\prime}))d_{\pi}^{\,\omega^\star_\pi}(\diff s'|s)\le \frac{2}{1-\gamma}\left( \varepsilon + \frac{3 \bar B}{\sqrt{M}}\sqrt{\log\left(\frac{8 M}{\delta}\right)}+ \sqrt{\frac{\revise{J}\delta}{M}}\right)  $;
\item \label{item-vartheta-Qhat}
$\displaystyle\left|\sum_{m=1}^M \vartheta_m(\hat Q_m(s,a,k)-Q_\pi^{\omega^{(m)}}(s,a,k))\right| 
\leq \varepsilon + \frac{3 \bar B}{\sqrt{M}} \sqrt{\log \left(\frac{8 M}{\delta}\right)} + \sqrt{\frac{\revise{J}\delta}{M}}  $.
\end{enumerate}
\end{lemma}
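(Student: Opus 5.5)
The plan is to split each weighted error into a bias part and a martingale-difference part, and then use the generalized Azuma--Hoeffding inequality of Lemma~\ref{lemma-fixed-prop34} (Appendix~\ref{sec_appendix}) on the martingale part.

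Fix $(s,a,k)$ and treat item~\ref{item-vartheta-Qhat} first; items~\ref{item-vartheta-Vhat} and~\ref{item-vartheta-Hhat} follow the same pattern using \eqref{ineq_norm_bias_V} and \eqref{ineq_norm_bias_H}. Write
\[
\hat Q_m - Q^{\omega^{(m)}}_\pi = \big(\EE[\hat Q_m|\cF_{m-1}] - Q^{\omega^{(m)}}_\pi\big) + \xi_m, \qquad \xi_m = \hat Q_m - \EE[\hat Q_m|\cF_{m-1}].
\]
Since $\omega^{(m)}$ is $\cF_{m-1}$-measurable, $\{\xi_m(s,a,k)\}$ is a martingale difference sequence with respect to $\{\cF_m\}$.

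\textbf{Bias part.} On the event of Condition~\ref{assu-approx-H-V}, the first term is bounded by $\underline\varepsilon\le\varepsilon$ in sup-norm at every $m$. Because $\sum_m\vartheta_m=1$, its weighted sum is at most $\varepsilon$. The bad events have probability at most $\delta/(4M)$ each, so a union bound over $m$ costs at most $\delta/4$ in total.

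\textbf{Martingale part.} We need to bound $\sum_m\vartheta_m\xi_m$. The increments satisfy $|\xi_m|\le \|\hat Q_m\|_\infty+\|Q^{\omega^{(m)}}_\pi\|_\infty+\underline\varepsilon$, which is of order $\bar B$ with $\bar B=B+1/(1-\gamma)$. However, this holds only with high probability, not almost surely, which is why we cannot use plain Azuma. Lemma~\ref{lemma-fixed-prop34} handles exactly this case. The steps are:
\begin{itemize}
\item Truncate the increments to the good event.
\item Apply an Azuma bound to the truncated sequence.
\item Control the discrepancy between truncated and untruncated sums through the conditional second moment $J$. Via a Cauchy--Schwarz/Chebyshev step on the low-probability event, this discrepancy contributes the $\sqrt{J\delta/M}$ term.
\end{itemize}

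For the weights, $\alpha_m=\sqrt m$ gives $\sum_{j=1}^M\alpha_j\ge \tfrac23 M^{3/2}$. Hence
\[
\sum_m\vartheta_m^2=\frac{\sum_m m}{\big(\sum_j\sqrt j\big)^2}\le \frac{9}{8M},
\]
so the Azuma deviation becomes $\bar B\sqrt{2\cdot\tfrac98\log(8M/\delta)/M}\le 3\bar B\sqrt{\log(8M/\delta)/M}$. The remaining probability budget, $\delta/4$, goes to this concentration step. The total failure probability is then $\delta/2$.

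\textbf{Uniformity over states.} The statement holds for all $(s,a,k)$ simultaneously, but a union bound over a continuum of states is impossible. Instead, I would apply the concentration to $\sup$-norm quantities: the bias and second-moment bounds are already in $\|\cdot\|_\infty$. Alternatively, I would check that Lemma~\ref{lemma-fixed-prop34} is stated for the relevant scalar sequence once the good event is defined uniformly. I expect this, together with exactly reproducing the constants in the unbounded-increment concentration, to be the main obstacle.

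\textbf{Item~\ref{item-vartheta-Hhat}.} By linearity in $\rw$,
\[
\delta_m(s',\omega^\star_\pi(\cdot|s'))=\sum_k\big(\omega^\star_\pi(k|s')-\omega^{(m)}(k|s')\big)\big(\hat H_m-H^{\omega^{(m)}}_\pi\big)(s',k),
\]
and the coefficient vector has $\ell_1$-norm at most $2$. Integrating against the probability measure $d^{\omega^\star_\pi}_\pi(\cdot|s)$ and exchanging the sum and the integral reduces this item to the sup bound on the weighted $\hat H$ errors from \eqref{ineq_norm_bias_H}. That bound is obtained exactly as for $\hat Q$, and it yields the factor $2/(1-\gamma)$.
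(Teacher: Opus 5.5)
Your route is the paper's own. Its proof does three things:
\begin{itemize}
\item it bounds $\sum_m\vartheta_m^2$;
\item it applies Lemma~\ref{lemma-fixed-prop34} with $X_m=V^{\omega^{(m)}}_\pi(s)\in[0,1/(1-\gamma)]$, which is where $\bar B=B+1/(1-\gamma)$ comes from;
\item it obtains the factor $2$ in \ref{item-vartheta-Hhat} from the two-term split of $\delta_m$.
\end{itemize}
Your bias/martingale/truncation sketch simply unpacks that appendix lemma, and for each \emph{fixed} $(s,a,k)$ it is complete.

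Two small points. First, $\sum_j\sqrt j\ge\frac23M^{3/2}$ gives $\sum_m\vartheta_m^2\le 9(M+1)/(8M^2)\le 9/(4M)$, not the $9/(8M)$ you state; either bound yields the constant $3$. Second, recentering the truncated increments via Cauchy--Schwarz needs the bad event to have small probability conditionally on $\cF_{m-1}$. Condition~\ref{assu-approx-H-V} only controls it unconditionally. The appendix lemma has the same slip: it bounds $\EE[Y'_m]$, whereas Azuma needs $\EE[Y'_m|\cF_{m-1}]$. This can be repaired in the TD instantiation, because Proposition~\ref{prop-bounded-TD} holds conditionally on $\cF_{m-1}$.

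The uniformity problem you flag is a genuine gap, and the paper's proof leaves it open as well. The paper shows each bound ``for every fixed $s$'' with probability $1-\delta/2$. The statement, and its uses in Lemma~\ref{lemma:pda-general}, Theorem~\ref{thm-V-spda} and Algorithm~\ref{alg:api}, require a single event on which the bounds hold for all $(s,a,k)$.

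Neither of your remedies can close this, because the uniform claim does not follow from Condition~\ref{assu-approx-H-V} at all. Here is a counterexample:
\begin{itemize}
\item take $\cS=[0,1]$ and i.i.d.\ Rademacher signs $\sigma_m$ drawn at iteration $m$;
\item let $b_m(s)=\lfloor 2^m s\rfloor \bmod 2$;
\item set $\hat Q_m(s,a,k)=Q^{\omega^{(m)}}_\pi(s,a,k)+\sigma_m(2b_m(s)-1)$.
\end{itemize}
Condition~\ref{assu-approx-H-V} then holds almost surely with zero bias, $B=1+1/(1-\gamma)$ and $J=1$, and its sup-norm requirements are already uniform. Yet at the random state $s^\ast=\sum_{m\le M}2^{-m}(1+\sigma_m)/2$ one has $2b_m(s^\ast)-1=\sigma_m$ for every $m\le M$. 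Hence the weighted errors of both $\hat Q_m$ and $\hat V_m$ at $s^\ast$ equal $\sum_m\vartheta_m=1$. Meanwhile the right-hand sides of \ref{item-vartheta-Vhat} and \ref{item-vartheta-Qhat} tend to $\varepsilon$ as $M\to\infty$, and $\varepsilon$ can be any positive number here.

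A correct proof must therefore use the structure of the actual estimator: $\hat Q_m=\phi^\top\theta^{(m)}$ with $\|\phi\|_2\le 1$ and $\theta^{(m)}$ bounded with high probability.
\begin{itemize}
\item For \ref{item-vartheta-Qhat}, the martingale part is $\phi(z)^\top\sum_m\vartheta_m(\theta^{(m)}-\EE[\theta^{(m)}|\cF_{m-1}])$. A Hilbert-space Azuma--Hoeffding bound on this $\mathbb{R}^d$-valued sum then gives one event valid for all $z$.
\item For \ref{item-vartheta-Vhat} and \ref{item-vartheta-Hhat}, first bound the integral against $d^{\omega^\star_\pi}_\pi(\cdot|s)$ by a supremum over $s'$. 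The increments still carry predictable, state-dependent weights $\omega^{(m)}(\cdot|s)$. You then additionally need a covering argument over the matrix with columns $\sum_a\pi(a|s)\phi(s,a,k)$, $k\in\cK$. The weights $\omega^{(m)}(\cdot|s)$ depend Lipschitz-continuously on this matrix, by strong convexity of the entropic regularizer. This costs extra factors depending on $d|\cK|$.
\end{itemize}
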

\begin{proof}
Observe first that the choice of $\alpha_m$ gives
$\sum_{m=1}^M \alpha_m= \sum_{m=1}^M \sqrt m \;\ge\; \int_{0}^{M} \!\!\sqrt x\,dx
=\frac{2}{3}M^{3/2}$. Hence, we have
\begin{equation}\label{ineq:vartheta-bound}
    \sum_{m=1}^M \vartheta_m^2=\frac{\sum_{m=1}^M \alpha_m^2}{(\sum_{j=1}^M \alpha_j)^2}\le\frac{M(M+1)/2}{(\frac{2}{3}M^{3/2})^2}\le\frac{9}{4M}.
\end{equation}
For Assertion~\ref{item-vartheta-Vhat}, since \eqref{ineq_norm_bias_V} holds with probability $1-\delta / (4M)$, we have for every fixed $s\in\cS$ that
\begin{align*}
\left|\sum_{m=1}^M \vartheta_m(\hat V_m(s)-V_\pi^{\omega^{(m)}}(s))\right| 
\leq \varepsilon + \bar B \sqrt{2 \sum_{m=1}^M \vartheta_m^2 \log \left(\frac{8 M}{\delta}\right)} + \sqrt{\frac{\revise{J}\delta}{M}} \leq \varepsilon + \frac{3 \bar B}{\sqrt{M}} \sqrt{\log \left(\frac{8 M}{\delta}\right)} + \sqrt{\frac{\revise{J}\delta}{M}}
\end{align*}
holds with probability at least $1-\delta / 2$. The first inequality in the above expression follows from Lemma~\ref{lemma-fixed-prop34}, and the second inequality holds because of~\eqref{ineq:vartheta-bound}.
This observation concludes the proof for Assertion~\ref{item-vartheta-Vhat}.
Assertion~\ref{item-vartheta-Hhat} (resp.\ Assertion \ref{item-vartheta-Qhat}) follows from a similar argument where~\eqref{ineq_norm_bias_H} (resp.\ Condition~\ref{assu-approx-H-V}) is invoked instead of~\eqref{ineq_norm_bias_V}.
\revise{Note that the additional factor of $2$ on the right-hand side of Assertion~\ref{item-vartheta-Hhat} arises from the observation that
\[\int_{\cS}\delta_m(s^{\prime}, \omega^\star_\pi(\cdot|s^{\prime}))d_{\pi}^{\,\omega^\star_\pi}(\diff s'|s)= \sum_{k\in\cK} \omega^\star_\pi(k|s) (\hat{H}_m(s,k) -  H^{\omega^{(m)}}_\pi(s,k)) + \sum_{k\in\cK}\omega^{(m)}(k|s)( H^{\omega^{(m)}}_\pi(s, k) - \hat{H}_m(s, k)) .\] 
Thus, the claim follows.}
\end{proof}

We proceed by presenting the following basic property for each update \eqref{line:ftrl} of Algorithm~\ref{alg:SPDA} \revise{that allows us to later analyze the overall progress in Lemma~\ref{lemma:pda-general}.} 
Note that the Bregman divergence induced by the distance-generating function $h$ corresponds to the Kullback-Leibler divergence $\mathsf{D}$, that is, we have $\mathsf{D}(\rw',\rw)=h(\rw') - h(\rw) - \inner{\nabla h(\rw)}{\rw' - \rw} $ for all $\rw,\rw'\in\Delta_{\cK}$.

\begin{lemma}
\label{lemma:three-point}
Define $\hat{\Psi}_{-1}=0$ and $\lambda_{-1}=0$,
and suppose $\lambda_m \ge \lambda_{m-1}$ for every $m\ge 1$. Then, for all $m\ge 0 $ and $s\in\mac S$, the iterates $\{\omega^{(m)}\}_{m=1}^M$ generated by Algorithm~\ref{alg:SPDA} satisfy
\begin{enumerate}[label = (\roman*)]
    \item \label{item-3point} $\hat{\Psi}_m(s, \omega^{(m+1)}(\cdot|s))+\lambda_m \mathsf{D}(
\rw,\omega^{(m+1)}(\cdot|s)) + \lambda_m h(\omega^{(m+1)}(\cdot|s)) - \lambda_m h(\rw) \leq \hat{\Psi}_m(s, \rw)$ for every $\rw\in W$,
    \item \label{eq-descent-pda}
    $\alpha_m \hat{\psi}_m(s, \omega^{(m+1)}(\cdot|s)) \leq \hat{\Psi}_m(s, \omega^{(m+1)}(\cdot|s))-\hat{\Psi}_{m-1}(s, \omega^{(m)}(\cdot|s))-\lambda_{m-1} \mathsf{D}(\omega^{(m+1)}(\cdot|s), \omega^{(m)}(\cdot|s))-\lambda_{m-1} h(\omega^{(m)}(\cdot|s)) + \lambda_{m-1} h( \omega^{(m+1)}(\cdot|s)).$
\end{enumerate}
\end{lemma}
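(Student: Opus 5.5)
Fix $s\in\cS$ and write $\omega^{(m+1)}=\omega^{(m+1)}(\cdot|s)$. Since $\hat{\Psi}_m(s,\cdot)$ is affine in $\rw$, we have
\[
F_m(\rw):=\hat{\Psi}_m(s,\rw)+\lambda_m h(\rw),
\]
and by \eqref{line:ftrl}, $\omega^{(m+1)}$ minimizes $F_m$ over $W$. The constant offsets $-\sum_i\alpha_i\sum_k\omega^{(i)}(k|s)\hat H_i(s,k)$ inside $\hat\Psi_m$ do not depend on $\rw$, so they do not change the minimizer. Both parts then follow from the standard three-point lemma for Bregman-regularized linear minimization, with the Bregman divergence of $h$ being $\mathsf{D}$.

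\emph{Part \ref{item-3point}.} The first-order optimality condition for $\omega^{(m+1)}$ over the closed set $W$ gives
\[
\inner{g_m+\lambda_m\nabla h(\omega^{(m+1)})}{\rw-\omega^{(m+1)}}\ge 0 \quad \text{for all } \rw\in W,
\]
where $g_m$ is the gradient of the affine map $\hat\Psi_m(s,\cdot)$. This uses convexity of $W$; I will assume $W$ is convex, as the setting implicitly requires. For $\lambda_m>0$ the entropy forces the minimizer into the relative interior of the simplex, so $\nabla h$ is well defined there. Since $\hat\Psi_m$ is affine,
\[
\hat\Psi_m(s,\rw)-\hat\Psi_m(s,\omega^{(m+1)})=\inner{g_m}{\rw-\omega^{(m+1)}}.
\]
Combining this with the identity
\[
\mathsf{D}(\rw,\omega^{(m+1)})=h(\rw)-h(\omega^{(m+1)})-\inner{\nabla h(\omega^{(m+1)})}{\rw-\omega^{(m+1)}}
\]
turns the optimality condition into exactly \ref{item-3point}. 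The degenerate case $\lambda_m=0$ (e.g.\ $m=0$ with $\lambda_0=0$) is handled by plain optimality of $\omega^{(m+1)}$ for the linear objective, with the $\lambda_m$ terms vanishing.

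\emph{Part \ref{eq-descent-pda}.} Write
\[
\hat\Psi_m(s,\rw)=\hat\Psi_{m-1}(s,\rw)+\alpha_m\hat\psi_m(s,\rw),
\]
evaluate it at $\rw=\omega^{(m+1)}$, and rearrange:
\[
\alpha_m\hat\psi_m(s,\omega^{(m+1)})=\hat\Psi_m(s,\omega^{(m+1)})-\hat\Psi_{m-1}(s,\omega^{(m+1)}).
\]
Next, apply \ref{item-3point} at index $m-1$ with comparison point $\rw=\omega^{(m+1)}\in W$:
\[
\hat\Psi_{m-1}(s,\omega^{(m+1)})\ge \hat\Psi_{m-1}(s,\omega^{(m)})+\lambda_{m-1}\mathsf{D}(\omega^{(m+1)},\omega^{(m)})+\lambda_{m-1}h(\omega^{(m)})-\lambda_{m-1}h(\omega^{(m+1)}).
\]
Substituting this lower bound into the previous identity yields \ref{eq-descent-pda} directly. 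For $m=0$ the convention $\hat\Psi_{-1}=0$, $\lambda_{-1}=0$ makes the claim an equality.

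The hypothesis $\lambda_m\ge\lambda_{m-1}$ is not actually needed for these two one-step statements; it only matters later, when the steps are telescoped in Lemma~\ref{lemma:pda-general}. The main technical care lies in the first-order optimality step of \ref{item-3point}: justifying differentiability of $h$ at a possibly boundary minimizer, and the convexity of $W$. I would handle the boundary issue through the interior argument above, or, alternatively, through a subgradient formulation of the optimality condition.
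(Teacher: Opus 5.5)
Your proof is correct and is essentially the paper's: (i) follows from the first-order optimality condition of \eqref{line:ftrl} combined with the Bregman identity for $\mathsf{D}$, using that $\hat\Psi_m(s,\cdot)$ differs from the objective in \eqref{line:ftrl} only by a constant; (ii) follows from $\hat\Psi_m=\hat\Psi_{m-1}+\alpha_m\hat\psi_m$ together with (i) at index $m-1$ and $\rw=\omega^{(m+1)}(\cdot|s)$. Your side remarks are accurate refinements that the paper leaves implicit: convexity of $W$ is needed, the cases $m=0$ and $\lambda_m=0$ require separate treatment, and $\lambda_m\ge\lambda_{m-1}$ is not used here. One small caveat: the minimizer lies in the relative interior of $\Delta_{\cK}$ only when $W$ meets that interior, so in general you should run the same argument on the smallest face of the simplex containing $W$.
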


\begin{proof}
    For Assertion~\ref{item-3point}, note that the first-order optimality condition of \eqref{line:ftrl} in Algorithm~\ref{alg:SPDA} implies
    \begin{align*}
    \sum_{i=0}^{m}\alpha_{i}\! \sum_{k\in\cK}\rw_k \hat{H}_{i}(s,k)  - 
        \sum_{i=0}^{m}\alpha_{i}\! \sum_{k\in\cK} \omega^{(m+1)}(k|s)\hat{H}_{i}(s,k) + \lambda_{m}  \langle \nabla h(\omega^{(m+1)}(\cdot|s)), \rw-\omega^{(m+1)}(\cdot|s)\rangle \ge 0 \revise{\;\; \forall \rw\in W.}
    \end{align*}
The claim then follows from the definition of Bregman divergence $\mathsf{D}(\cdot, \cdot)$ and the definition of $\hat\Psi_m$.
    For Assertion~\ref{eq-descent-pda}, we have
    \begin{equation*}
        \begin{split}
            \alpha_m \hat{\psi}_m(s, \omega^{(m+1)}(\cdot|s)) 
            & =\hat{\Psi}_m(s, \omega^{(m+1)}(\cdot|s))-\hat{\Psi}_{m-1}(s, \omega^{(m+1)}(\cdot|s)) 
         \\&\leq \hat{\Psi}_m(s, \omega^{(m+1)}(\cdot|s))-\hat{\Psi}_{m-1}(s, \omega^{(m)}(\cdot|s))-\lambda_{m-1} \mathsf{D}(\omega^{(m+1)}(\cdot|s),\omega^{(m)}(\cdot|s))
         \\&\quad -\lambda_{m-1} h(\omega^{(m)}(\cdot|s)) + \lambda_{m-1} h( \omega^{(m+1)}(\cdot|s)),
        \end{split}
    \end{equation*}
where the equality holds by the construction of $\hat\Psi_m$, and the inequality \revise{uses Assertion~\ref{item-3point} with $m\leftarrow m-1$ and $\rw \leftarrow \omega^{(m+1)}(\cdot|s)$}. 
\end{proof}

Building on Lemmas~\ref{lemma:weighted-value-errors} and~\ref{lemma:three-point}, we next establish \revise{a general high-probability characterization of Algorithm~\ref{alg:SPDA} with $\alpha_m = \sqrt{m}$ and any specification of parameters $\{\lambda_m\}_{m\in[M]}$.} 
This will in turn be used later to show that the output $\hat{V}_\pi$ of Algorithm~\ref{alg:SPDA} approaches the robust value function~$V_\pi$ with a high probability.

\begin{lemma}
\label{lemma:pda-general}
Suppose that Condition~\ref{assu-approx-H-V} holds\revise{, and define $\hat{\Psi}_{-1}=0$ and $\lambda_{-1}=0$. In addition, set $\alpha_m=\sqrt{m}$ for $m\le M$}. Then, for any $\varepsilon \ge \underline{\varepsilon}$, with probability at least $1-\delta/2$, we have
\begin{equation*}
\begin{split}
0 \le   \sum_{m=1}^M \vartheta_m V_\pi^{\omega^{(m)}}(s) - V_\pi(s)
&\leq  \frac{2\varepsilon}{1-\gamma}+\left(\sum_{m=1}^M \sqrt{m}\right)^{\!\!\!-1}\!\!\left(\sum_{m=1}^M \frac{m B^2}{2(1-\gamma) \lambda_{m-1}} + \frac{2 \lambda_M \log(|\cK|)}{1-\gamma}\!\right) 
\\&\qquad\qquad\qquad\qquad\qquad+ \frac{6  \bar B}{\sqrt{M}(1-\gamma)} \sqrt{\log\! \left(\frac{8 M}{\delta}\right)} + \frac{2}{1-\gamma}\sqrt{\frac{\revise{J}\delta}{M}}  \qquad \forall s\in\mac S.
\end{split}
\end{equation*} 
\end{lemma}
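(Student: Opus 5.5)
The lower bound $0\le \sum_m\vartheta_m V^{\omega^{(m)}}_\pi(s)-V_\pi(s)$ is immediate: $V_\pi(s)=\min_{\omega\in\Omega}V^\omega_\pi(s)=V^{\omega^\star_\pi}_\pi(s)$ by Lemma~\ref{lemma-exist-opt-policy}, each $V^{\omega^{(m)}}_\pi(s)\ge V_\pi(s)$, and the $\vartheta_m$ are convex weights. For the upper bound we apply Lemma~\ref{lemma:pdl} with $\omega'=\omega^\star_\pi$ and $\omega=\omega^{(m)}$. Recalling $\psi_m=\psi^{\omega^{(m)}}_\pi$, this gives
\begin{align*}
V^{\omega^{(m)}}_\pi(s)-V_\pi(s)=-\frac{1}{1-\gamma}\int_{\cS}\psi_m(s',\omega^\star_\pi(\cdot|s'))\,d^{\omega^\star_\pi}_\pi(\diff s'|s).
\end{align*}
We write $\psi_m=\hat\psi_m-\delta_m$, multiply by $\vartheta_m$, and sum over $m$. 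The $\delta_m$ part is exactly the quantity controlled by Lemma~\ref{lemma:weighted-value-errors}\ref{item-vartheta-Hhat}. On that event of probability $1-\delta/2$ it contributes $\frac{2}{1-\gamma}(\varepsilon+\frac{3\bar B}{\sqrt M}\sqrt{\log(8M/\delta)}+\sqrt{J\delta/M})$, which is precisely the last three terms of the claimed bound. It then remains to bound the deterministic-in-form regret term $-\sum_m\alpha_m\hat\psi_m(s',\omega^\star_\pi(\cdot|s'))$ pointwise in $s'$ by $\sum_m \frac{mB^2}{2\lambda_{m-1}}+2\lambda_M\log|\cK|$. Once this is done, integrating against the probability measure $d^{\omega^\star_\pi}_\pi(\cdot|s)$ and dividing by $(1-\gamma)\sum_m\sqrt m$ finishes the proof.

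The pointwise regret bound is a standard dual-averaging (FTRL) argument built on Lemma~\ref{lemma:three-point}. We sum Assertion~\ref{eq-descent-pda} over $m=0,\dots,M$; the $\hat\Psi$ terms telescope to $\hat\Psi_M(s',\omega^{(M+1)})$. We then use Assertion~\ref{item-3point} at $m=M$ with $\rw=\omega^\star_\pi(\cdot|s')$, which bounds $\hat\Psi_M(s',\omega^{(M+1)})\le\hat\Psi_M(s',\rw)+\lambda_M(h(\rw)-h(\omega^{(M+1)}))$, and the latter is at most $\hat\Psi_M(s',\rw)+\lambda_M\log|\cK|$. Since $\hat\psi_m(s',\omega^{(m)}(\cdot|s'))=0$, the quantity $\hat\Psi_M(s',\rw)$ is exactly $\sum_m\alpha_m\hat\psi_m(s',\rw)$. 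Rearranging gives
\begin{align*}
-\sum_m\alpha_m\hat\psi_m(s',\rw)\le\lambda_M\log|\cK|+\sum_m\Big[\alpha_m\big(\hat\psi_m(s',\omega^{(m)})-\hat\psi_m(s',\omega^{(m+1)})\big)-\lambda_{m-1}\mathsf D(\omega^{(m+1)},\omega^{(m)})\Big]+\text{(entropy telescoping terms)}.
\end{align*}
Each bracket is at most $\alpha_m B\|\omega^{(m)}-\omega^{(m+1)}\|_1-\frac{\lambda_{m-1}}{2}\|\omega^{(m+1)}-\omega^{(m)}\|_1^2$, by $\|\hat H_m\|_\infty\le B$ and Pinsker's inequality. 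This in turn is at most $\frac{\alpha_m^2B^2}{2\lambda_{m-1}}=\frac{mB^2}{2\lambda_{m-1}}$. The term $m=0$ is handled separately: $\alpha_0=0$, so it vanishes.

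The main obstacle is bookkeeping the entropy terms $-\lambda_{m-1}h(\omega^{(m)})+\lambda_{m-1}h(\omega^{(m+1)})$ under a changing regularization $\lambda_m$. Summing by parts gives $\sum_m(\lambda_{m-1}-\lambda_m)h(\omega^{(m+1)})$ plus boundary terms. Since $\lambda_m$ is nondecreasing and $-\log|\cK|\le h\le 0$, these boundary terms contribute at most another $\lambda_M\log|\cK|$. This explains the factor $2\lambda_M\log|\cK|$. A second subtlety is that the bound $\|\hat H_m\|_\infty\le B$ is only guaranteed on the event of Condition~\ref{assu-approx-H-V}. I would intersect with the same event already used in Lemma~\ref{lemma:weighted-value-errors}, which keeps the total failure probability at $\delta/2$.
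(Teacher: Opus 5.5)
Your proposal is correct and follows essentially the same route as the paper: the performance difference lemma with $\omega'=\omega^\star_\pi$, the split $\psi_m=\hat\psi_m-\delta_m$ with Lemma~\ref{lemma:weighted-value-errors}\ref{item-vartheta-Hhat} handling the error part, and the dual-averaging regret bound. That bound comes from summing Lemma~\ref{lemma:three-point}\ref{eq-descent-pda}, applying Lemma~\ref{lemma:three-point}\ref{item-3point} at $m=M$ with $\rw=\omega^\star_\pi(\cdot|s')$, and then using Pinsker, H\"older and $bx-ax^2\le b^2/(4a)$. Your summation-by-parts treatment of the entropy terms, which relies on $\lambda_m$ being nondecreasing (already required by Lemma~\ref{lemma:three-point}), is a cleaner account of the $2\lambda_M\log(|\cK|)$ term than the paper's compressed bookkeeping.
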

\begin{proof}
\revise{The lower bound follows trivially from the observation that $V_\pi^{\omega^{(m)}}(s) - V_\pi(s)\ge 0$ for all $m=0,\ldots,M$.
For the upper bound, we first sum} Lemma~\ref{lemma:three-point}\ref{eq-descent-pda} from $m=0$ to $M$. \revise{Observing that $\hat\Psi_{-1}=0$, $\lambda_{-1}=0$ and $h(\rw)\le 0$ for all $\rw\in W$}, we then obtain 
\begin{align*}
\alpha_0 \hat{\psi}_0(s, \omega^{(1)}(\cdot|s)) 
&\leq  \hat{\Psi}_M(s, \omega^{(M+1)}(\cdot|s))-\sum_{m=1}^M \lambda_{m-1} \mathsf{D}(\omega^{(m+1)}(\cdot|s),\omega^{(m)}(\cdot|s))-\sum_{m=1}^M \alpha_m \hat{\psi}_m (s,\omega^{(m+1)}(\cdot|s)) \\
&\quad - \lambda_{M-1} h( \omega^{(M)}(\cdot|s)) \\
& \le \hat{\Psi}_M(s, \omega^{(M+1)}(\cdot|s))-\frac{1}{2}\sum_{m=1}^M \lambda_{m-1}\|\omega^{(m+1)}(\cdot|s)\!-\!\omega^{(m)}(\cdot|s)\|_1^2 
\\&\quad -  \!\sum_{m=1}^M \!\alpha_m  \sum_{k\in\cK} (\omega^{(m+1)}(k|s)-\omega^{(m)}(k|s))\hat H_m(s, k) - \lambda_{M-1} h( \omega^{(M)}(\cdot|s))\\
&\le \hat{\Psi}_M(s, \omega^{(M+1)}(\cdot|s))-\frac{1}{2}\sum_{m=1}^M \lambda_{m-1}\|\omega^{(m+1)}(\cdot|s)\!-\!\omega^{(m)}(\cdot|s)\|_1^2  
\\&\quad-\sum_{m=1}^M \alpha_m \cbr{\max_{k\in\cK}\|\hat H_m(s,k)\|_2}\|\omega^{(m+1)}(\cdot|s)-\omega^{(m)}(\cdot|s)\|_1  - \lambda_{M-1} h( \omega^{(M)}(\cdot|s))\\
&\le \hat{\Psi}_M(s, \omega^{(M+1)}(\cdot|s))+\sum_{m=1}^M \frac{\alpha_m^2  \cbr{\max_{k\in\cK}\|\hat H_m(s,k)\|_2}^2}{2 \lambda_{m-1}}- \lambda_{M-1} h( \omega^{(M)}(\cdot|s))
\\& \leq \hat{\Psi}_M(s, \omega^\star_\pi(s))+\!\sum_{m=1}^M \frac{\alpha_m^2 \cbr{\max_{k\in\cK}\|\hat H_m(s,k)\|_2}^2}{2 \lambda_{m-1}}  + 2\lambda_{M} \log(|\cK|),
\end{align*}
where the second inequality holds by Pinsker's inequality \revise{(Lemma~\ref{lemma:pinsker})} and the definition of $\hat{\psi}$, the third inequality uses Hölder's inequality, and the fourth inequality follows \revise{from the simple fact that  
$ bx - a x^2  \leq  b^2 /(4 a)$ for any $a>0$ and $x, b \in \RR$.
}
Finally, the last inequality follows from Lemma~\ref{lemma:three-point}\ref{item-3point} with \revise{$\rw=\omega^\star_\pi(\cdot|s)$ and the observations that $\mathsf{D}(
\rw,\omega^{(m+1)}(\cdot|s)) - h(\rw) \ge 0$ and $|h(\rw)|\le \log(|\cK|) $ for all $\rw\in W$}.
Since $\alpha_0=0$, integrating both sides 
of the above inequality with respect to $d_{\pi}^{\,\omega^\star_\pi}(\cdot|s)$ and using the definition of $\hat\Psi_M$ and~\eqref{def_perf_diff_psi} gives
\begin{align*}
   0&\leq \sum_{m=1}^M \alpha_m \int_{\cS} \psi_m(s,  \omega^\star_\pi(\cdot|s))d_{\pi}^{\,\omega^\star_\pi}(\diff s'|s)
   \\&\quad+\sum_{m=1}^M \frac{\alpha_m^2 \cbr{\max_{k\in\cK}\|\hat H_m(s,k)\|_2}^2}{2 \lambda_{m-1}} +\sum_{m=1}^M \alpha_m \int_{\cS} \delta_m(s,  \omega^\star_\pi(\cdot|s))d_{\pi}^{\,\omega^\star_\pi}(\diff s'|s) +  2 \lambda_{M} \log(|\cK|)
   \\&=\sum_{m=1}^M \alpha_m(1-\gamma)(V_\pi^{\omega^\star_\pi}(s)-V_\pi^{\omega^{(m)}}(s))
\\&\quad+\sum_{m=1}^M \frac{\alpha_m^2 \cbr{\max_{k\in\cK}\|\hat H_m(s,k)\|_2}^2}{2 \lambda_{m-1}}+\sum_{m=1}^M \alpha_m \int_{\cS} \delta_m(s^{\prime}, \omega^\star_\pi(\cdot|s^{\prime})) d_{\pi}^{\,\omega^\star_\pi}(\diff s'|s)+  2 \lambda_{M} \log(|\cK|)
\\&\revise{=\left(\sum_{j=1}^{M}\alpha_{j}\right)\sum_{m=1}^M \vartheta_m(1-\gamma)(V_\pi(s)-V_\pi^{\omega^{(m)}}(s))}
\\&\quad \revise{+\left(\sum_{j=1}^{M}\alpha_{j}\right)^2\sum_{m=1}^M \frac{\vartheta_m^2 B^2}{2 \lambda_{m-1}} + 2  \left(\sum_{j=1}^{M}\alpha_{j}\right)\sum_{m=1}^M \vartheta_m \left( \varepsilon + \frac{3 \bar B}{\sqrt{M}}\sqrt{\log\left(\frac{8 M}{\delta}\right)} + \sqrt{\frac{\revise{J}\delta}{M}}\right) + 2 \lambda_{M} \log(|\cK|),}
\end{align*}
where the first equality follows from Lemma~\ref{lemma:pdl}, \revise{and the second equality uses the observations that $V^{\omega^\star_\pi}_\pi = V_\pi$, 
$\vartheta_m=\alpha_m/(\sum_{j=1}^{M}\alpha_{j})$, together with Lemma~\ref{lemma:weighted-value-errors}\ref{item-vartheta-Hhat}.}
\revise{The upper bound then follows by the choice of $\alpha_m=\sqrt{m}$. This observation concludes the proof.}
\end{proof}

With Lemma \ref{lemma:weighted-value-errors} and \ref{lemma:pda-general} in place,
we are now ready to specify the concrete  choice of 
$\cbr{\lambda_m}_{m \in [M]}$, and correspondingly establish the high probability convergence of $\hat V_\pi$ and $\hat Q_\pi$ towards
both the robust value function~$V_\pi$ and the robust joint action-value function $Q_\pi$ defined through
\begin{align}\label{eq_def_opt_joint_q_give_pi}
        Q_\pi(s,a,k) = \min_{\omega \in \Omega} Q^\omega_\pi(s,a,k) \quad \forall (s,a,k) \in \cS \times \cA \times \cK.
    \end{align}

\begin{theorem}
\label{thm-V-spda}
Suppose that Condition~\ref{assu-approx-H-V} holds.  
Set  
$$
\alpha_m=\sqrt{m}, \quad \lambda_m=\frac{(m+1) B}{2 \sqrt{\log(|\cK|)}}, \quad \forall m \leq M
$$
in Algorithm~\ref{alg:SPDA}.
Then, for any $\varepsilon\ge 4\underline{\varepsilon}/(1-\gamma)$, 
the total number of iterations required by Algorithm~\ref{alg:SPDA} to output
\begin{align*}
   -\varepsilon\le \hat V_\pi(s)-V_\pi(s) \le \varepsilon,\quad 
   -\varepsilon\le \hat Q_\pi(s,a,k)-Q_\pi(s,a,k) \le \varepsilon \quad \forall (s,a,k)\in\cS\times\cA\times\cK
\end{align*}
with probability at least $1-\delta$ is bounded by
\begin{align}\label{iter_comp_eval}
M=\cO\left(
\frac{\bar B^{2}\log(|\cK|)}{(1-\gamma)^{2} \varepsilon^2}\log\rbr{\frac{\bar B^2}{\delta(1-\gamma)^2\varepsilon^2}}+\frac{J\delta }{\varepsilon^2}\right).
\end{align}
\end{theorem}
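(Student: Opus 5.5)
The proof combines Lemma~\ref{lemma:pda-general}, which controls $\sum_m \vartheta_m V_\pi^{\omega^{(m)}} - V_\pi$, with Lemma~\ref{lemma:weighted-value-errors}, which controls the weighted estimation errors, through a union bound. Each event holds with probability at least $1-\delta/2$, so their intersection holds with probability at least $1-\delta$. On that event, the triangle inequality gives
\[
\hat V_\pi(s)-V_\pi(s)=\sum_{m=1}^M\vartheta_m\big(\hat V_m(s)-V^{\omega^{(m)}}_\pi(s)\big)+\Big(\sum_{m=1}^M\vartheta_m V^{\omega^{(m)}}_\pi(s)-V_\pi(s)\Big).
\]
The first term is bounded in absolute value by Lemma~\ref{lemma:weighted-value-errors}\ref{item-vartheta-Vhat}. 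The second term is nonnegative and bounded above by Lemma~\ref{lemma:pda-general}.

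\textbf{Step 1: plugging in $\lambda_m$.} Substituting $\lambda_{m-1}=mB/(2\sqrt{\log|\cK|})$ gives
\[
\sum_{m=1}^M \frac{mB^2}{2\lambda_{m-1}} = MB\sqrt{\log|\cK|},
\qquad
2\lambda_M\log|\cK| = (M+1)B\sqrt{\log|\cK|}.
\]
Dividing by $\sum_{m}\sqrt m\ge \tfrac23 M^{3/2}$ shows the middle term of Lemma~\ref{lemma:pda-general} is $\cO\big(B\sqrt{\log|\cK|}/((1-\gamma)\sqrt M)\big)$. Now choose the free parameter in both lemmas as $\varepsilon' = (1-\gamma)\varepsilon/4 \ge \underline\varepsilon$; this is admissible precisely because $\varepsilon\ge 4\underline\varepsilon/(1-\gamma)$. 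The total error is then at most
\[
\frac{2\varepsilon'}{1-\gamma}+\varepsilon' \le \frac{3\varepsilon}{4}\cdot\frac{\ldots}{\ldots}\ \text{(a constant fraction of }\varepsilon\text{)},
\]
plus terms of order
\[
\frac{\bar B\sqrt{\log|\cK|}}{(1-\gamma)\sqrt M}\sqrt{\log(8M/\delta)}+\frac{1}{1-\gamma}\sqrt{\frac{J\delta}{M}}.
\]
Some care with constants is needed, since $2\varepsilon'/(1-\gamma)=\varepsilon/2$ and $\varepsilon'\le\varepsilon/4$, leaving a budget of $\varepsilon/4$ for the $M$-dependent terms.

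\textbf{Step 2: solving for $M$.} We need $M$ large enough that these $M$-dependent terms are at most $\varepsilon/4$. For the first term this requires
\[
M\gtrsim \frac{\bar B^2\log|\cK|}{(1-\gamma)^2\varepsilon^2}\,\log(8M/\delta).
\]
The inequality $M\ge c\log(M/\delta)$ is implicit in $M$; it is resolved by the standard fact that $M\ge 2c\log(2c/\delta)$ suffices. This yields the logarithmic factor in \eqref{iter_comp_eval}, with $\bar B^2/(\delta(1-\gamma)^2\varepsilon^2)$ inside the log after absorbing $\log|\cK|$. The $J$-term requires $M\gtrsim J\delta/\varepsilon^2$, possibly with extra $(1-\gamma)^{-2}$ factors that the stated bound absorbs or that I would track and accept. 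This balancing step, together with the constant bookkeeping, is where I expect the main care to be needed.

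\textbf{Step 3: the $Q$ bound.} Write
\[
\hat Q_\pi - Q_\pi=\sum_m\vartheta_m\big(\hat Q_m-Q^{\omega^{(m)}}_\pi\big)+\Big(\sum_m\vartheta_m Q^{\omega^{(m)}}_\pi - Q_\pi\Big).
\]
The first term is handled by Lemma~\ref{lemma:weighted-value-errors}\ref{item-vartheta-Qhat}. For the second, note that $Q_\pi(s,a,k)=r(s,a)+\gamma\int V_\pi\,dP_k$, since $V_\pi$ is the pointwise minimum over $\omega$. Hence
\[
Q^{\omega^{(m)}}_\pi-Q_\pi=\gamma\int\big(V^{\omega^{(m)}}_\pi-V_\pi\big)\,dP_k .
\]
Averaging over $m$ and using the uniform-in-$s$ bound from Lemma~\ref{lemma:pda-general} gives $0\le\cdot\le\gamma\times$ the same bound. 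The $Q$ error therefore obeys the same estimate, and the same $M$ suffices.
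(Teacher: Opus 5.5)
Your proposal is correct and follows essentially the same route as the paper. You split $\hat V_\pi-V_\pi$ into the weighted estimation error from Lemma~\ref{lemma:weighted-value-errors} and the nonnegative optimization gap from Lemma~\ref{lemma:pda-general}. You do the same for $\hat Q_\pi-Q_\pi$, via $Q^{\omega^{(m)}}_\pi-Q_\pi=\gamma\int(V^{\omega^{(m)}}_\pi-V_\pi)\,\mathrm{d}P_k\ge 0$. You set the free accuracy parameter to $(1-\gamma)\varepsilon/4$, substitute $\lambda_m$, and choose $M$ so that the $M$-dependent terms total at most $\varepsilon/4$.

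Your caveat on the $J$ term is justified and should not be waved away. The term $\frac{2}{1-\gamma}\sqrt{J\delta/M}$ coming from Lemma~\ref{lemma:pda-general} forces $M$ of order $J\delta/((1-\gamma)^2\varepsilon^2)$. The paper's stated bound also omits this $(1-\gamma)^{-2}$ factor, so you should keep it explicitly rather than expect it to be absorbed.
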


\begin{proof}
Adding $\sum_{m=1}^M \vartheta_m (\hat V_m(s)-V_\pi^{\omega^{(m)}}(s))$ to both sides in the stated inequality of Lemma~\ref{lemma:pda-general} and observing that $(1-\gamma)\varepsilon/4\ge \underline{\varepsilon}$, we obtain, with probability at least $1-\delta/2$,
\begin{equation}\label{pda-general-hat-V}
\begin{split}
\sum_{m=1}^M \vartheta_m {(\hat V_m(s)-V_\pi^{\omega^{(m)}}(s))}\le \sum_{m=1}^M \vartheta_m { \hat V_m(s)-V_\pi(s) }\leq \sum_{m=1}^M \vartheta_m {( \hat V_m(s)-V_\pi^{\omega^{(m)}}(s))}+ \frac{\varepsilon}{2}+C(M,\delta),
\end{split}
\end{equation}
where 
\begin{equation*}
    C(M,\delta)=\left(\sum_{m=1}^M \sqrt{m}\!\right)^{\!\!\!-1}\!\!\!\left(\sum_{m=1}^M \frac{m B^2}{2(1-\gamma) \lambda_{m-1}}+\frac{2 \lambda_M \log(|\cK|)}{1-\gamma}\!\right) + \frac{6 \bar B}{\sqrt{M}(1-\gamma)} \sqrt{\log\! \left(\frac{8 M}{\delta}\right)} +\frac{2}{1-\gamma} \sqrt{\frac{\revise{J}\delta}{M}}.
\end{equation*}
Noting $(1-\gamma)\varepsilon/4\ge \underline{\varepsilon}$, we then apply Lemma~\ref{lemma:weighted-value-errors}\ref{item-vartheta-Vhat} to the above inequality, which yields
\begin{equation}\label{ineq-V-m-dependent}
\begin{split}
-\frac{\varepsilon}{4} - \frac{3 \bar B}{\sqrt{M}} \sqrt{\log \left(\frac{8 M}{\delta}\right)} - \sqrt{\frac{\revise{J}\delta}{M}}
\le   \sum_{m=1}^M \vartheta_m \hat V_m(s)-V_{\pi}(s)
\leq  \frac{3\varepsilon}{4} + C(M,\delta) + \frac{3 \bar B}{\sqrt{M}} \sqrt{\log\! \left(\frac{8 M}{\delta}\right)}+ \sqrt{\frac{\revise{J}\delta}{M}}
\end{split}
\end{equation}
with probability $1-\delta$. 
On the other hand, in view of Definition~\ref{def_nature_state_action_func} and Lemma~\ref{lemma:pda-general}, we have
\begin{equation*}
\begin{split}
0 \le \sum_{m=1}^M \vartheta_m Q_\pi^{\omega^{(m)}}(s,a,k) - Q_\pi(s,a,k)
\le \frac{\varepsilon}{2}+C(M,\delta)
\end{split}
\end{equation*}
with probability $1-\delta/2$.
Adding $\sum_{m=1}^M \vartheta_m (\hat Q_m(s,a,k)-Q_\pi^{\omega^{(m)}}(s,a,k))$ to 
both sides
of the above inequality and applying Lemma~\ref{lemma:weighted-value-errors}\ref{item-vartheta-Qhat} with the observation $(1-\gamma)\varepsilon/4\ge \underline{\varepsilon}$, we obtain with probability $1-\delta$ that
\begin{equation}\label{ineq-Q-m-depedent}
\begin{split}
-\frac{\varepsilon}{4} - \frac{3 \bar B}{\sqrt{M}} \sqrt{\log \left(\frac{8 M}{\delta}\right)}- \sqrt{\frac{\revise{J}\delta}{M}}
&\le \sum_{m=1}^M \vartheta_m \hat Q_m(s,a,k)- Q_\pi(s,a,k)
\\&\leq  \frac{3\varepsilon}{4} + C(M,\delta) + \frac{3 \bar B}{\sqrt{M}} \sqrt{\log\! \left(\frac{8 M}{\delta}\right)}+ \sqrt{\frac{\revise{J}\delta}{M}}.
\end{split}
\end{equation}
Finally, by choosing
$\lambda_m=\frac{(m+1) B }{2 \sqrt{\log(|\cK|)}}$, we have 
$\big(\sum_{m=1}^{M}\sqrt{m}\big)^{-1}\le \big(\int_0^M \sqrt{x}\,\diff x\big)^{-1}=\frac{3}{2} M^{-3/2}$, and $ m/\lambda_{m-1}=2\sqrt{\log(|\cK|)}/B$.
Hence,
choosing
\begin{align*}
M=\cO\left(
\frac{\bar B^{2}\log(|\cK|)}{(1-\gamma)^{2} \varepsilon^2}\log\rbr{\frac{\bar B^2}{\delta(1-\gamma)^2\varepsilon^2}}+\frac{J\delta }{\varepsilon^2}\right)
\end{align*}
ensures that 
\begin{align*}
    C(M,\delta)+\frac{3 \bar B}{\sqrt{M}} \sqrt{\log\! \left(\frac{8 M}{\delta}\right)} + \sqrt{\frac{\revise{J}\delta}{M}} \leq \frac{\varepsilon}{4}.
\end{align*}
Applying the previous observation to~\eqref{ineq-V-m-dependent} and~\eqref{ineq-Q-m-depedent} together with the construction of $\hat V_\pi$ and $\hat Q_\pi$ concludes the proof.
\end{proof}

\revise{
A few remarks are in order before we conclude our discussion in this section. 
First, the requirement on the estimation accuracy $\varepsilon = \Omega(\underline{\varepsilon})$ is natural,
as Algorithm \ref{alg:SPDA} works with an inaccurate estimation of $\hat{Q}_m$ as stated in Condition \ref{assu-approx-H-V}.
}
In addition,  \revise{the first term in the iteration complexity \eqref{iter_comp_eval} obtained in Theorem~\ref{thm-V-spda} scales logarithmically with respect to $1/\delta$, while the second term scales  linearly with respect to $\delta$.} 
As the required confidence level $1-\delta$ approaches $1$, \revise{the resulting complexity is clearly dominated by the first term.}
Consequently, in view of Theorem~\ref{thm-V-spda}, it suffices for Algorithm~\ref{alg:SPDA} to take $\tilde{\cO}(1/\varepsilon^2)$ iterations to output an $\varepsilon$-accurate estimation of the robust value function $V_\pi$ associated with the mixture ambiguity set in Definition~\ref{def_mix_amb_set}. 
Of course, 
this iteration complexity hinges upon Condition \ref{assu-approx-H-V}, which requires accurate estimation of the value function $V^\omega_\pi $ and the action-value function $H^\omega_\pi$ of nature. 
This in turn will be fulfilled by Algorithm \ref{alg:TD}, \revise{which we discuss in detail in Section \ref{ssec:TD} and correspondingly determine its associated values of $(B,J, \underline{\varepsilon})$.}
In particular, we will determine the corresponding sample complexity required to certify Condition~\ref{assu-approx-H-V}.
Finally, we will establish in Section~\ref{ssec:putting-it-together}  the total sample complexity of Algorithm \ref{alg:SPDA}
for estimating the robust value function $V_\pi$.

%% file: td_nature.tex

\subsection{Estimating the Joint Action-value Function} \label{ssec:TD}

We now proceed to introduce a temporal difference (TD) learning-based method \cite{sutton1988learning} for estimating the joint action-value function $Q^\omega_\pi$ associated with any $(\pi, \omega) \in \Pi \times \Omega$.
In particular, we will establish that the proposed method constitutes a valid subroutine in Algorithm \ref{alg:SPDA} that certifies Condition \ref{assu-approx-H-V}.

Consider the Bellman operator
$T_\pi^\omega$ defined through 
\begin{align}\label{eq_bellman_v}
\!\!\!\!(T_\pi^\omega Q)(s, a, k)= r(s,a) + \gamma \int_{\cS}  \sum_{a' \in \cA}\sum_{k' \in \cK} \pi(a'|s')  \omega(k'|s') Q(s', a', k') P_k(\diff s'|s,a)  \;\;\; \forall (s,a,k)\in\mac S\times\cA\times\cK,
\end{align}
for any $Q: \cS \times \cA \times \cK \to \R$. Going forward we also write $\cZ = \cS \times \cA \times \cK$, together with $z \in \cZ$ in short for $z = (s, a, k)$ for some $s \in \cS, a\in \cA, k\in \cK$.
It is clear that $T_\pi^\omega$ maps the space of bounded continuous functions onto itself, and constitutes a contraction operator in $\norm{\cdot}_\infty$ norm. In addition, by dynamic programming equations, it can readily be seen that the unique fixed point of $T^\omega_\pi$ is given by $Q^\omega_\pi$ (Banach fixed point theorem).

\begin{figure}[b] 
\vspace{-0.2in}
\centering
\begin{minipage}[b]{0.53\textwidth}
\vspace{0pt} 
\begin{algorithm}[H]
\caption{Temporal difference learning}\label{alg:TD}
\begin{algorithmic}[1]
\REQUIRE $(\pi, \omega) \in \Pi \times \Omega$, $\eta$, $T$, $\tau$, $S_0 = s \in \cS$
\STATE  $K_0 \sim \omega(\cdot|S_0)$, $A_0 \sim \pi(\cdot|S_0)$, $S_1 \sim P_{K_0}(\cdot|S_0, A_0)$, 
$K_1 \sim \omega(S_1)$, $A_1 \sim \pi(\cdot|S_1)$.
Set 
$\xi_{-1}^\tau = (Z_0, Z_1)$.
\FOR{$t = 0,\ldots,T$}
  \STATE $\xi_{t}^\tau \gets$ \Call{MarkovSampler}{$\xi^\tau_{t-1},\omega, \pi,\tau$}
  \STATE \label{step:TD} $\theta_{t+1} = \theta_t - \eta \hat{F}(\theta_t, \xi_{t}^\tau)$
\ENDFOR
\RETURN { $\hat{Q} (\cdot) =  \phi(\cdot)^\top \theta_{T+1}$ }
\end{algorithmic}
\end{algorithm}
\end{minipage}%
\hfill
\begin{minipage}[b]{0.45\textwidth}
\vspace{0pt}
\makeatletter
\renewcommand{\ALG@name}{Procedure}
\makeatother
\begin{algorithm}[H]
\caption{\textsc{MarkovSampler}($\xi,\omega,\pi,\tau$)}\label{procedure_markov_sampler}
\begin{algorithmic}[1]
\REQUIRE $(\pi, \omega) \in \Pi \times \Omega$, $\tau$, $\xi \in \cZ \times \cZ$
\STATE $\texttt{Z}_0 = \xi(0), \texttt{Z}_1 = \xi(1)$
\FOR{$i=0,\ldots,\tau$}
  \STATE Sample $\texttt{K}_i \sim \omega(\cdot|\texttt{S}_i)$, $\texttt{A}_i \sim \pi(\cdot|\texttt{S}_i)$
  \STATE Sample $\texttt{S}_{i+1}\sim P_{\texttt{K}_i }(\cdot|\texttt{S}_i, \texttt{A}_i)$
  \STATE Set $\texttt{Z}_i = (\texttt{S}_i, \texttt{A}_i, \texttt{K}_i)$  
\ENDFOR
\RETURN $(\texttt{Z}_{\tau-1}, \texttt{Z}_\tau)$ 
\end{algorithmic}
\end{algorithm}
\end{minipage}
\end{figure}

It should be noted that as the set $\cZ$ is continuous, it becomes computationally prohibitive even to represent $Q^\omega_\pi$ with a reasonable computing budget. 
Instead, we consider finding the best possible approximation of $Q^\omega_\pi$ within the linear space spanned by some pre-determined basis functions.
In particular, let $\phi_i: \cZ \to \RR$ denote the $i$-th basis function for $i = 1, \ldots ,d$,
and define the feature mapping $\phi(\cdot) = [\phi_1(\cdot), \ldots, \phi_d(\cdot)]$. 
 Without loss of generality, we assume that $\norm{\phi(\cdot)}_2 \leq 1$.
The constructions of basis functions $\cbr{\phi_i}_{i=1}^d$ and the choice of dimension $d$ have been extensively discussed in the prior literature, for instance, by using random feature mappings that can uniformly approximate a function class with bounded norms in a reproducing kernel Hilbert space \cite{rahimi2007random}. 

We proceed as follows. Let us denote by $\cQ = \cbr{Q_{\theta}(\cdot) \coloneqq \theta^\top \phi(\cdot):  \theta \in \RR^d}$  the linear span of the basis functions. 
One can view $\cQ$ as a proper subspace of $\cL^2(\cZ, \nu^\omega_\pi)$,
where $\nu^\omega_\pi$ corresponds to the stationary measure of Markov process $\cbr{Z_t \coloneqq (S_t,A_t, K_t)}_{t \geq 0}$ with transition law defined by
$A_t \sim \pi(\cdot|S_t), K_t \sim \omega(S_t), S_{t+1} \sim P_{K_t}(\cdot|S_t, A_t)$.
We are interested in finding  $\theta^\omega_\pi \in \RR^d$ such that $Q_{\theta^\omega_\pi}(\cdot) $  provides, in some sense, a reasonable approximation of~$Q^\omega_\pi$.
In particular, from \eqref{eq_bellman_v} and  $T^\omega_\pi Q^\omega_\pi = Q^\omega_\pi$, it is natural to consider finding $\theta^\omega_\pi$ such that 
\begin{align}\label{eq_projected_bellman}
   \Pi_{\cL^2(\cZ, \nu^\omega_\pi)} T^\omega_\pi Q_{\theta^\omega_\pi} = Q_{\theta^\omega_\pi},
\end{align}
where $\Pi_{\cL^2(\cZ, \nu^\omega_\pi)}$ denotes the orthogonal projection onto $\cQ$.\footnote{Clearly, the projection $\Pi_{\cL^2(\cZ, \nu^\omega_\pi)}$ is necessary, otherwise a solution might not exist for \eqref{eq_projected_bellman}.
It should be also noted that projection on $\cZ$ in $\norm{\cdot}_\infty$ norm, although conceptually appealing and naturally leading to value iteration, is in general not implementable as the space $\cZ$ is uncountable.
}
From the optimality condition of the projection $\Pi_{\cL^2(\cZ, \nu^\omega_\pi)}$, it can be readily verified that \eqref{eq_projected_bellman} is equivalent to  
 $F(\theta^\omega_\pi) = 0$, where 
\begin{align}\label{eq_td_op_exact}
F(\theta)= & \int_{\cZ} \phi(z) \bigg[
\phi(z)^\top\theta \;-\; r(s, a)   - \gamma  \int_{ \cS} \sum_{a' \in \cA} \sum_{k' \in \cK } \pi(a'|s') \omega(k'|s') \phi(s', a', k')^\top \theta  P_k(\diff s' | s,a)
\bigg]\nu^\omega_\pi(\diff z) .
\end{align}

We present the details of the proposed TD learning method in Algorithm \ref{alg:TD}.
In a nutshell, the method operates by drawing samples from the generating kernels $\cbr{P_k}_{k\in\cK}$ to construct a stochastic estimation $\hat F$ of the operator $F(\cdot)$ defined in \eqref{eq_td_op_exact}.
Note that the trajectory $\cbr{Z_t}_{t \ge 0}$ is sampled from the generating kernels $\cbr{P_k}_{k \in \cK}$ in a Markovian manner.
In view of the Markovian noise,  we take a similar strategy as in \cite{kotsalis2022simple, yu1994rates} that periodically skips $\tau$ samples 
within the above construction of $\hat{F}$. 
In particular, at the $t$-th iteration of Algorithm~\ref{alg:TD}, the algorithm evaluates the stochastic operator $\hat{F}$ through
\begin{align}\label{eq_hat_F}
\hat{F}(\theta_t, \xi_t^{\tau})=(\langle\phi(Z_t^\tau), \theta_t\rangle-r(S_{t}^\tau,A_t^{\tau})-\gamma\langle\phi((Z_{t}^\tau)' ), \theta_t\rangle) \phi(Z_t^\tau),
\end{align}
where
$\xi_t^\tau = (Z_t^\tau, (Z_t^\tau)')$, generated by Procedure \ref{procedure_markov_sampler},   corresponds to the $t$-th transition pair that is separated from $(t-1)$-th pair by $\tau$ timesteps.  
We will establish later that $\hat{F}$ constructed above serves as an approximately unbiased estimation of $F$ in \eqref{eq_td_op_exact}.

Before we proceed, it is worth mentioning here that we will establish high probability convergence of Algorithm \ref{alg:TD} over an unbounded domain, which should be contrasted with existing convergence analysis in expectation \cite{kotsalis2022simple,bhandari2018finite}.
As will be clarified later, the interdependence of the noise associated with $\hat{F}$ and the norm of the iterates makes the corresponding analysis for high probability results considerably more involved compared to expectation bounds.
In particular, the approach we develop for establishing boundedness of iterates with high probability seems to be new for TD-type methods, and hence may be of independent interest.

For any to-be-evaluated joint policy $(\pi, \omega) \in \Pi \times \Omega$, 
we make the following standard assumption on the Markov process $\cbr{Z_t}_{t\ge0}$
and the feature mapping $\phi$, see~\citep{tsitsiklis1996analysis, kotsalis2022simple}. \revise{The first part of the assumption requires the state process to mix at a geometric rate under $(\pi,\omega)$, whereas the positive-definiteness assumption on $\Sigma$ rules out degenerate feature representations under the stationary distribution.}

\begin{assumption}\label{assu:geometric-ergodicity}
There exist
constants $C>0$ and $\rho\in(0,1)$ such that for every Borel set
$\mac B\subseteq\mac S$,
\begin{align}
\big\lvert
     \PP^\omega_\pi(S_{t+\tau}\in \mac B \mid \mac F_{t-1})-\nu^\omega_\pi(\mac B)
\big\rvert \le C \rho^{\tau}
   \quad\forall\,t\in\mb Z_{++},\ \tau\in\mb Z_+ ,
\end{align}
where $\cF_{t-1}$ denotes the filtration up to iteration $t-1$ of Algorithm~\ref{alg:TD}.
In addition, let  $ \Sigma=\int_{\cZ }\phi(z)\,\phi(z)^\top\nu^\omega_\pi(\diff z)$. 
We assume that $\Sigma \succ 0$.
\end{assumption}

To proceed, let us denote 
\begin{align*}
    \varepsilon_{\mathrm{approx}} =\sup_{\pi \in \Pi, \omega\in\Omega}\|Q_{\theta^\omega_\pi}-Q^\omega_\pi\|_\infty
\end{align*}
as the approximation error of approximating $Q^\omega_\pi$ with linearly parameterized $Q_{\theta^\omega_\pi}$. 
In addition, let 
\begin{align*}
    R=\sup_{\pi \in \Pi, \omega\in\Omega}\|\theta^\omega_\pi\|_2, \ U=2R + 1. 
\end{align*} 
From  \eqref{eq_projected_bellman} and Assumption~\ref{assu:geometric-ergodicity}, it can be readily verified that $\theta^\omega_\pi$ is continuous with respect to $\omega$,
consequently $R < \infty$ since $\Omega$ is compact.

We now proceed to discuss some immediate implications of Assumption \ref{assu:geometric-ergodicity}.
\revise{In particular, Lemma \ref{lemma-property-F} establishes two basic properties of the operator $F(\cdot)$ defined in \eqref{eq_td_op_exact}, namely its Lipschitz continuity and strong monotonicity. These properties will serve as the basis for the analysis of Algorithm~\ref{alg:TD}.}

\begin{lemma}\label{lemma-property-F}
Suppose Assumption~\ref{assu:geometric-ergodicity} holds.
Denote $\mu =\lambda_{\min }(\Sigma)(1-\gamma)$ and $L=\lambda_{\max}(\Sigma)(1+\gamma)$ \revm{where $\lambda_{\min }(\Sigma)$ and $\lambda_{\max }(\Sigma)$ are the smallest and largest eigenvalues of the matrix $\Sigma$, respectively. }
Then, we have
\begin{enumerate}[label = (\roman*)]
    \item \label{item:F-ub}  $\|F(\theta)\|_2\le L\|\theta-\theta^\omega_\pi\|_2$ for any $\theta \in \RR^d$,
    \item \label{item:F:strong:monotone}  $\langle F(\theta), \theta-\theta^\omega_\pi\rangle \ge \mu  \|\theta-\theta^\omega_\pi\|_2^2 $ for any $\theta \in \RR^d$.
\end{enumerate}   
\end{lemma}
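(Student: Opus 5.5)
The plan is to exploit the affine structure of $F$ together with $F(\theta^\omega_\pi)=0$. Write
\[
(\mathcal{P} f)(z)=\int_{\cS}\sum_{a'\in\cA}\sum_{k'\in\cK}\pi(a'|s')\,\omega(k'|s')\,f(s',a',k')\,P_k(\diff s'|s,a)
\]
for the one-step transition operator of the chain $\cbr{Z_t}$. Then $F(\theta)=\Sigma\theta-b-\gamma A\theta$ with
\[
A=\int_{\cZ}\phi(z)\,(\mathcal P\phi)(z)^\top\nu^\omega_\pi(\diff z),\qquad b=\int_{\cZ}\phi(z)\,r(s,a)\,\nu^\omega_\pi(\diff z).
\]
Here $\mathcal P\phi$ is understood componentwise. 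Since $F(\theta^\omega_\pi)=0$, we get $F(\theta)=(\Sigma-\gamma A)\Delta$ with $\Delta=\theta-\theta^\omega_\pi$. For $x=\phi^\top\Delta$ we have $\Delta^\top\Sigma\Delta=\|x\|_{\nu}^2$, where $\|\cdot\|_\nu$ denotes the $\cL^2(\cZ,\nu^\omega_\pi)$ norm. Similarly, $u^\top A\Delta=\langle \phi^\top u,\mathcal P x\rangle_\nu$ for any $u\in\RR^d$.

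The key step is the nonexpansiveness $\|\mathcal P f\|_\nu\le\|f\|_\nu$ for every $f\in\cL^2(\cZ,\nu^\omega_\pi)$. First, Jensen's inequality gives $(\mathcal Pf)(z)^2\le(\mathcal P f^2)(z)$, since $\mathcal P$ is a Markov (probability) kernel. Second, $\int \mathcal P f^2\,\diff\nu^\omega_\pi=\int f^2\,\diff\nu^\omega_\pi$ by stationarity of $\nu^\omega_\pi$ for the $Z$-chain. The one point to check is that $\nu^\omega_\pi$, introduced as the stationary measure of $\cbr{Z_t}$ (with the state marginal being the limit in Assumption~\ref{assu:geometric-ergodicity}), is indeed invariant under $\mathcal P$. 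This is immediate from its definition as the stationary law of the chain with transition $z\mapsto z'$, where $s'\sim P_k(\cdot|s,a)$, $a'\sim\pi(\cdot|s')$ and $k'\sim\omega(\cdot|s')$. I expect this identification of invariance, together with the routine measurability and integrability of $\phi$ (bounded by $\|\phi\|_2\le1$), to be the only place requiring care.

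For \ref{item:F:strong:monotone}, I would compute
\[
\langle F(\theta),\Delta\rangle=\|x\|_\nu^2-\gamma\langle x,\mathcal P x\rangle_\nu\ge\|x\|_\nu^2-\gamma\|x\|_\nu\|\mathcal Px\|_\nu\ge(1-\gamma)\|x\|_\nu^2,
\]
using Cauchy--Schwarz and then nonexpansiveness. Then $\|x\|_\nu^2=\Delta^\top\Sigma\Delta\ge\lambda_{\min}(\Sigma)\|\Delta\|_2^2$, which gives the constant $\mu$.

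For \ref{item:F-ub}, I would use the dual norm: $\|F(\theta)\|_2=\sup_{\|u\|_2\le1}u^\top(\Sigma-\gamma A)\Delta$. For fixed $u$, set $y=\phi^\top u$. Then
\[
u^\top(\Sigma-\gamma A)\Delta=\langle y,x\rangle_\nu-\gamma\langle y,\mathcal Px\rangle_\nu\le(1+\gamma)\|y\|_\nu\|x\|_\nu,
\]
again by Cauchy--Schwarz and nonexpansiveness. Finally, $\|y\|_\nu\|x\|_\nu\le\lambda_{\max}(\Sigma)\|u\|_2\|\Delta\|_2$, which yields $\|F(\theta)\|_2\le L\|\Delta\|_2$.
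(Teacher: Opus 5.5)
Your proof is correct and complete. It takes a different route from the paper's, and the paper's route has a gap.

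The paper tries to compute the mean matrix exactly. It asserts $\int_{\cZ}\mathcal K(z)\,\nu^\omega_\pi(\diff z)=(1-\gamma)\Sigma$, replacing the cross-moment $\int\phi(z)(\mathcal P\phi)(z)^\top\nu^\omega_\pi(\diff z)$ (your matrix $A$) by $\Sigma$ "because $\nu^\omega_\pi$ is the steady-state distribution." It then reads off both claims from the symmetric identity $F(\theta)=(1-\gamma)\Sigma(\theta-\theta^\omega_\pi)$.

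Stationarity only says that $Z$ and its successor $Z'$ have the same marginal law. That gives $\EE[\phi(Z')\phi(Z')^\top]=\Sigma$, not $\EE[\phi(Z)\phi(Z')^\top]=\Sigma$. The latter already fails for a two-state chain (one action, one kernel) with transition matrix $\bigl(\begin{smallmatrix}0.1&0.9\\0.9&0.1\end{smallmatrix}\bigr)$ and tabular features $\phi(s_i)=e_i$. There $A=\tfrac12\bigl(\begin{smallmatrix}0.1&0.9\\0.9&0.1\end{smallmatrix}\bigr)\neq\Sigma=\tfrac12 I$, even though the chain is geometrically ergodic.

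Your argument never needs an exact formula for $A$. You only bound its quadratic and bilinear forms, using Cauchy--Schwarz and the $\cL^2(\cZ,\nu^\omega_\pi)$-nonexpansiveness of the transition operator (Jensen plus invariance); this is the standard Tsitsiklis--Van Roy argument. It yields exactly the stated constants $\mu=(1-\gamma)\lambda_{\min}(\Sigma)$ and $L=(1+\gamma)\lambda_{\max}(\Sigma)$. The $(1+\gamma)$ in $L$ would be superfluous if the paper's identity held, which is a hint that the identity is too strong.

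So the paper's shortcut would buy an exact symmetric representation of $F$, and with it the sharper Lipschitz constant $(1-\gamma)\lambda_{\max}(\Sigma)$, but it is not valid in general. Your proof establishes the lemma as stated. It uses only invariance of $\nu^\omega_\pi$ and $\|\phi\|_2\le1$; the geometric mixing part of Assumption~\ref{assu:geometric-ergodicity} is not needed here, and $\Sigma\succ0$ is needed only to make $\mu>0$.
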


To facilitate our discussion, let us define $\mathcal K:\cZ \to \mathbb R^{d\times d}$ through 
\begin{align*}
    \mathcal K(z)=\phi(z) \sbr{ \phi(z)^\top-\gamma \int_{\mathcal S} \sum_{a' \in \cA} \sum_{k' \in \cK} \pi(a'|s') \omega(k'|s') \phi(s',a',k')^\top P_k(\diff s' | s,a) } \quad \forall z\in\cZ.
\end{align*}

\begin{proof}[Proof of Lemma~\ref{lemma-property-F}]
For Assertion~\ref{item:F-ub}, observe first that 
\begin{equation}\label{reexpress-K}
\begin{split}    
\int_{\cZ} \mathcal K(z)\,\nu^\omega_\pi(\diff z)&= \Sigma - \gamma \int_{\cZ\times\cS} \sum_{a' \in \cA} \sum_{k' \in \cK} \pi(a'|s') \omega(k'|s') \phi(z)\phi(z')^\top P_k(\diff s' | s,a) \nu^\omega_\pi(\diff z)
\\&=\Sigma - \gamma \int_{\cZ}\phi(z')\phi(z')^{\!\top}\, \nu^\omega_\pi(\diff z')=(1-\gamma)\Sigma,
\end{split}
\end{equation}
where the first and last equalities follow from the definition of~$\Sigma$, and the second equality holds because $\nu^\omega_\pi$ is the steady-state distribution.
We then have 
\begin{equation}\label{reexpress-F}
    F(\theta)=F(\theta)-F(\theta^\omega_\pi)=\int_{\cZ}
\mathcal K(z)\,\nu^\omega_\pi(\diff z)(\theta-\theta^\omega_\pi)=(1-\gamma)\Sigma(\theta-\theta^\omega_\pi),
\end{equation}
where the first equality holds because $F(\theta^\omega_\pi)=0$, 
the second equality follows from the linearity of $F$ in $\theta$, 
and the third equality uses~\eqref{reexpress-K}.
Assertion~\ref{item:F-ub} then follows by the definition of~$L$.
For Assertion~\ref{item:F:strong:monotone}, we have 
\begin{equation*}
    \begin{split}
        \langle F(\theta), \theta-\theta^\omega_\pi\rangle = 
            (\theta-\theta^\omega_\pi)^\top (1-\gamma)\Sigma (\theta-\theta^\omega_\pi)
\ge \mu  \|\theta-\theta^\omega_\pi\|_2^2,
    \end{split}
\end{equation*}
where the first equality follows from~\eqref{reexpress-F}, and the inequality follows by the definition of~$\mu $.
\end{proof}

Let us define the stochastic error associated with $\hat{F}(\theta_t, \xi_t^\tau)$ in \eqref{eq_hat_F} through 
\begin{align}\label{def_td_err_zeta}
  \zeta_t= \hat{F}(\theta_t, \xi_t^\tau)- F(\theta_t).
\end{align}
Lemma~\ref{lemma:mixing} below establishes that the bias of the estimator $\hat{F}(\theta_t, \xi_t^\tau)$ decays exponentially fast in $\tau$ under Assumption~\ref{assu:geometric-ergodicity}.
\begin{lemma}
\label{lemma:mixing}
Suppose that Assumption~\ref{assu:geometric-ergodicity} holds. We then have
$$
\Big\|F(\theta_t)-\mb E^\omega_\pi\big[\hat{F}(\theta_t, \xi_{t}^{\tau}) \mid \mathcal{F}_{t-1}\big]\Big\|_2 \leq (1+\gamma) C\rho^\tau\|\theta_t-\theta^\omega_\pi\|_2 \quad \PP^\omega_\pi\text{-a.s.}.
$$
\end{lemma}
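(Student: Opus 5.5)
The plan is to linearize $\hat F$ around the fixed point $\theta^\omega_\pi$ and then transfer the geometric mixing of Assumption~\ref{assu:geometric-ergodicity} to the matrix-valued part of the estimator. Write $\xi_t^\tau=(Z_t^\tau,(Z_t^\tau)')$. Since $\hat F(\theta,\xi_t^\tau)$ in \eqref{eq_hat_F} is affine in $\theta$, I will decompose
\begin{align*}
\hat F(\theta_t,\xi_t^\tau)=\hat{\mathcal K}(\xi_t^\tau)(\theta_t-\theta^\omega_\pi)+\hat F(\theta^\omega_\pi,\xi_t^\tau),
\qquad
\hat{\mathcal K}(\xi)=\phi(Z)\bigl[\phi(Z)-\gamma\phi(Z')\bigr]^{\top}.
\end{align*}
Conditioning on $Z_t^\tau$ and integrating out the next pair $(S',A',K')$ gives $\EE[\hat{\mathcal K}(\xi_t^\tau)\mid Z_t^\tau]=\mathcal K(Z_t^\tau)$, with $\mathcal K$ as defined before the proof of Lemma~\ref{lemma-property-F}. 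On the other side, \eqref{reexpress-F} gives $F(\theta_t)=\int_{\cZ}\mathcal K(z)\,\nu^\omega_\pi(\diff z)\,(\theta_t-\theta^\omega_\pi)$. Because $\theta_t$ is $\cF_{t-1}$-measurable, the matrix part of the bias is therefore
\[
\Bigl(\int_{\cZ}\mathcal K(z)\,\nu^\omega_\pi(\diff z)-\EE^\omega_\pi[\mathcal K(Z_t^\tau)\mid\cF_{t-1}]\Bigr)(\theta_t-\theta^\omega_\pi).
\]

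The second step bounds this matrix difference. Since $\|\phi\|_2\le 1$ and $\pi(\cdot|s'),\omega(\cdot|s')$ are probability vectors, $\|\mathcal K(z)\|_2\le 1+\gamma$ for all $z$. The law of $Z_t^\tau$ given $\cF_{t-1}$ is obtained from the conditional law of $S_t^\tau$ by appending $A\sim\pi(\cdot|S)$ and $K\sim\omega(\cdot|S)$. The same kernel maps the state marginal of $\nu^\omega_\pi$ to $\nu^\omega_\pi$, so by Assumption~\ref{assu:geometric-ergodicity} the two laws differ setwise by at most $C\rho^\tau$. Integrating a function of operator norm at most $1+\gamma$ against this signed measure then gives the factor $(1+\gamma)C\rho^\tau$, where the setwise bound is used as the total-variation control. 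Multiplying by $\|\theta_t-\theta^\omega_\pi\|_2$ produces exactly the stated right-hand side.

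The remaining and main obstacle is the anchor term $\EE^\omega_\pi[\hat F(\theta^\omega_\pi,\xi_t^\tau)\mid\cF_{t-1}]$. Its stationary mean $F(\theta^\omega_\pi)$ vanishes, but it must also contribute nothing to the conditional bias if the bound is to remain purely multiplicative. My plan is to absorb it into the same linear structure: apply the identity behind \eqref{reexpress-F} to the conditional measure, writing the anchor term as the integral of $\mathcal K(\cdot)\theta^\omega_\pi-r\phi$ against the deviation measure. I would then use the defining relation \eqref{eq_projected_bellman} of $\theta^\omega_\pi$ to show that this integrand is orthogonal to the deviation of the conditional law from $\nu^\omega_\pi$. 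This is the delicate point, and I will check carefully whether the fixed-point relation suffices at the conditional level or only under $\nu^\omega_\pi$. If it holds only under $\nu^\omega_\pi$, the bound would have to carry an additional $C\rho^\tau$ term, since $\|\mathcal K(\cdot)\theta^\omega_\pi-r\phi\|_2\le(1+\gamma)R+1$.
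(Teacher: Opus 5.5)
Your treatment of the matrix part (linearizing $\hat F$ around $\theta^\omega_\pi$, using $\EE[\hat{\mathcal K}(\xi_t^\tau)\mid Z_t^\tau]=\mathcal K(Z_t^\tau)$ and $\|\mathcal K(z)\|_{\mathrm{op}}\le 1+\gamma$, and transferring the setwise mixing bound from $\cS$ to $\cZ$) matches the paper's argument. The anchor term you single out is the real issue, and your plan for it cannot work. Indeed, $\mathcal K(z)\theta^\omega_\pi-r(s,a)\phi(z)=\phi(z)\bigl[Q_{\theta^\omega_\pi}(z)-(T^\omega_\pi Q_{\theta^\omega_\pi})(z)\bigr]$. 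The relation \eqref{eq_projected_bellman}, i.e.\ $F(\theta^\omega_\pi)=0$, only says that this integrates to zero against $\nu^\omega_\pi$. It gives no information about its integral against the conditional law $\nu_{\tau|t}$ of $Z_t^\tau$ given $\cF_{t-1}$.

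The paper does not close this gap either. The first equality in its proof writes the bias as $\int_{\cZ}\mathcal K(z)\,(\nu^\omega_\pi-\nu_{\tau|t})(\diff z)\,(\theta_t-\theta^\omega_\pi)$, which amounts to silently setting $\EE^\omega_\pi[\hat F(\theta^\omega_\pi,\xi_t^\tau)\mid\cF_{t-1}]=0$. In fact the purely multiplicative bound fails in general. Take $\cS=\{1,2\}$, one action, $K=1$, $\phi\equiv 1$, $r(1,\cdot)=1$, $r(2,\cdot)=0$, and a kernel that keeps the state with probability $3/4$ and switches otherwise. Then $\nu^\omega_\pi$ is uniform, $\Sigma=1$, and Assumption~\ref{assu:geometric-ergodicity} holds with $\rho=1/2$. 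Moreover $\hat F(\theta,\xi)=(1-\gamma)\theta-r(S)$ and $F(\theta)=(1-\gamma)\theta-\tfrac12$, so
\[
F(\theta_t)-\EE^\omega_\pi\bigl[\hat F(\theta_t,\xi_t^\tau)\mid\cF_{t-1}\bigr]=\PP^\omega_\pi\bigl(S_t^\tau=1\mid\cF_{t-1}\bigr)-\tfrac12 .
\]
This does not depend on $\theta_t$, and it is nonzero whenever the conditional law of $S_t^\tau$ is not uniform (e.g.\ at $t=0$ with $S_0=1$). The claimed right-hand side, by contrast, vanishes at $\theta_t=\theta^\omega_\pi=1/(2(1-\gamma))$ (e.g.\ with $\theta_0=\theta^\omega_\pi$).

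So your fallback is the correct conclusion, and it can be sharpened. Since $T^\omega_\pi Q^\omega_\pi=Q^\omega_\pi$ and $T^\omega_\pi$ is a $\gamma$-contraction in $\|\cdot\|_\infty$, the residual obeys $\sup_{z}|Q_{\theta^\omega_\pi}(z)-(T^\omega_\pi Q_{\theta^\omega_\pi})(z)|\le(1+\gamma)\varepsilon_{\rm approx}$, which is better than your $(1+\gamma)R+1$. What one can actually prove is
\[
\Bigl\|F(\theta_t)-\EE^\omega_\pi\bigl[\hat F(\theta_t,\xi_t^\tau)\mid\cF_{t-1}\bigr]\Bigr\|_2\le 2(1+\gamma)C\rho^\tau\bigl(\|\theta_t-\theta^\omega_\pi\|_2+\varepsilon_{\rm approx}\bigr).
\]
This reduces to the stated form only under Bellman completeness (e.g.\ $\varepsilon_{\rm approx}=0$). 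Otherwise the additive term would have to be carried through Lemma~\ref{lemma-bias-conv} and Proposition~\ref{prop-bounded-TD}.

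The factor $2$ corrects a second, minor slip that your proposal shares with the paper. For a signed measure $\mu$ of total mass zero, $|\mu|(\cZ)=2\sup_{\cB}|\mu(\cB)|$. Hence the setwise bound in Assumption~\ref{assu:geometric-ergodicity} controls the total-variation norm needed in the H\"older step only up to a factor $2$.
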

\begin{proof}
Denote by $\nu_{\tau|t}\in\cP(\cZ)$ the conditional distribution of $Z_{t+\tau}$ given $\cF_{t-1}$, that is, for every Borel set $\mathcal{B}$, $\PP^\omega_\pi[Z_{t+\tau}\in \mac B\mid\mathcal F_{t-1}]=\int_{\mac B} \nu_{\tau|t}(\diff z)$.
We then have
\begin{equation*}
\begin{split}
    \left\|F(\theta_t)-\mb E^\omega_\pi \big[\hat{F}(\theta_t, \xi_{t}^{\tau}) \mid \mathcal{F}_{t-1}\big]\right\|_2
    &= \left\|\int_{\cZ}\!
\cK(z)\,(\nu^\omega_\pi(\diff z)- \nu_{\tau|t}(\diff z)) (\theta_t-\theta^\omega_\pi)\right\|_2
   \\&\le \left\|\int_{\cZ}\cK(z) \,(\nu^\omega_\pi(\diff z)-\nu_{\tau|t}(\diff z)) \right\|_{\mathrm{op}}\|\theta_t-\theta^\omega_\pi\|_2
\\&\le \sup_{z\in\cZ} \|\cK(z)\|_{\mathrm{op}} \, \|\nu^\omega_\pi-\nu_{\tau|t}\|_{\mathrm{TV}} \, \|\theta_t-\theta^\omega_\pi\|_2
\\&\le \sup_{z,z'\in\cZ}\|\phi(z)\|_2(\|\phi(z)\|_2+\gamma \|\phi(z')\|_2) \, \|\nu^\omega_\pi-\nu_{\tau|t}\|_{\mathrm{TV}} \, \|\theta_t-\theta^\omega_\pi\|_2
\\&\le (1+\gamma) 
\|\nu^\omega_\pi-\nu_{\tau|t}\|_{\mathrm{TV}} \, \|\theta_t-\theta^\omega_\pi\|_2
\le(1+\gamma)  C  \rho^{\tau}\|\theta_t-\theta^\omega_\pi\|_2,
\end{split}
\end{equation*}
where the first and the third inequalities follow from Cauchy-Schwarz inequality, the second inequality uses Hölder's inequality, and the fourth inequality holds because $\|\phi(z)\|_2\le 1$ for all $z\in\cZ$. Finally, the last inequality uses Assumption~\ref{assu:geometric-ergodicity}. Thus, the claim follows.
\end{proof}

We are ready to discuss the convergence behavior of Algorithm~\ref{alg:TD}, which will help us specify the concrete values of its parameters $(\eta, T)$ that in turn certify Condition \ref{assu-approx-H-V}.
We begin by first establishing that the bias of the estimate $\theta_t$ exhibits linear convergence.

\begin{lemma}\label{lemma-bias-conv}
Suppose that Assumption~\ref{assu:geometric-ergodicity} holds, set $\tau > (\log(\mu)-\log(C(1+\gamma))) /\log(\rho)$ and define $\tilde\mu  = \mu  - (1+\gamma) C\rho^\tau>0$. If
$
\eta \leq \tilde\mu /L^2,
$
then
$$
\|\mb E^\omega_\pi[\theta_t]-\theta^\omega_\pi\|_2^2 \leq(1-\eta \tilde\mu )^t\|\theta_0-\theta^\omega_\pi\|_2^2 .
$$
\end{lemma}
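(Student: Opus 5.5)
The plan is to track the deterministic recursion satisfied by the mean iterate. Write $\bar\theta_t = \EE^\omega_\pi[\theta_t]$ and $e_t = \bar\theta_t - \theta^\omega_\pi$. Taking expectations in the update of Algorithm~\ref{alg:TD} and using the tower property with $\cF_{t-1}$ gives
\[
\bar\theta_{t+1} = \bar\theta_t - \eta\,\EE^\omega_\pi\big[\EE^\omega_\pi[\hat F(\theta_t,\xi_t^\tau)\mid\cF_{t-1}]\big].
\]
Since $\theta_t$ is $\cF_{t-1}$-measurable and $\hat F$ is affine in $\theta$, I decompose
\[
\EE^\omega_\pi[\hat F(\theta_t,\xi_t^\tau)\mid\cF_{t-1}] = F(\theta_t) + b_t, \qquad b_t = \Big(\int_{\cZ}\cK(z)\,(\nu_{\tau|t}-\nu^\omega_\pi)(\diff z)\Big)(\theta_t-\theta^\omega_\pi),
\]
where $b_t$ is the Markovian bias appearing in the proof of Lemma~\ref{lemma:mixing}. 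That proof shows the random matrix $D_t=\int\cK\,(\nu_{\tau|t}-\nu^\omega_\pi)$ satisfies $\|D_t\|_{\mathrm{op}}\le(1+\gamma)C\rho^\tau$.

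By \eqref{reexpress-F}, $\EE[F(\theta_t)] = (1-\gamma)\Sigma e_t$. The recursion therefore becomes
\[
e_{t+1} = \big(I-\eta(1-\gamma)\Sigma\big)e_t - \eta\,\EE[D_t(\theta_t-\theta^\omega_\pi)].
\]
Expanding the square, I want to combine three ingredients:
\begin{itemize}
\item strong monotonicity from Lemma~\ref{lemma-property-F}\ref{item:F:strong:monotone}, which gives the cross term $-2\eta\mu\|e_t\|_2^2$;
\item the bias bound, which contributes at most $+2\eta(1+\gamma)C\rho^\tau\|e_t\|_2^2$ to the cross term;
\item the quadratic term, bounded by $\eta^2L^2\|e_t\|_2^2$ via Lemma~\ref{lemma-property-F}\ref{item:F-ub}.
\end{itemize}
This yields $\|e_{t+1}\|_2^2\le(1-2\eta\tilde\mu+\eta^2L^2)\|e_t\|_2^2\le(1-\eta\tilde\mu)\|e_t\|_2^2$, using $\eta\le\tilde\mu/L^2$. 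Unrolling from $t=0$, where $e_0=\theta_0-\theta^\omega_\pi$ is deterministic, gives the claim. The condition on $\tau$ is exactly what makes $\tilde\mu>0$.

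\textbf{Main obstacle.} The bias term $\EE[D_t(\theta_t-\theta^\omega_\pi)]$ involves the random matrix $D_t$ multiplying the random vector $\theta_t-\theta^\omega_\pi$. Its norm is controlled by $\EE\|\theta_t-\theta^\omega_\pi\|_2$, not by $\|e_t\|_2$, so the recursion does not close in the mean error alone.

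I would try two remedies, in order:
\begin{itemize}
\item First, check whether under Assumption~\ref{assu:geometric-ergodicity} one can argue that $\nu_{\tau|t}$ is effectively deterministic. The sampler restarts from the previous pair, so the conditional law depends on the past only through the last state. One could then treat $D_t$ as deterministic in the mean recursion, in the spirit of the paper's use of Lemma~\ref{lemma:mixing}.
\item Failing that, run the same contraction argument on $\EE\|\theta_t-\theta^\omega_\pi\|_2^2$, with the noise variance tracked separately, and bound $\|e_t\|_2^2$ via Jensen. This inflates the right-hand side by a variance contribution. The stated clean form would then hold only as an argument on the conditional-expectation level, i.e.\ applying the one-step contraction to $\EE[\theta_{t+1}\mid\cF_{t-1}]-\theta^\omega_\pi$ and invoking Jensen.
\end{itemize}
I expect the intended argument to be the first one: treat the conditional bias as a perturbation of size $(1+\gamma)C\rho^\tau$ of the strongly monotone linear map $(1-\gamma)\Sigma$, and absorb it into $\mu$ to obtain $\tilde\mu$.
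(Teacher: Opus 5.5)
There is a genuine gap, and it sits exactly where you placed your ``main obstacle''; neither remedy closes it. Your skeleton is the paper's: expand $\|\EE\theta_{t+1}-\theta^\omega_\pi\|_2^2$, use strong monotonicity of $F$ at $\EE\theta_t$, a bias cross term of size $(1+\gamma)C\rho^\tau$, the Lipschitz bound for the quadratic term, and $\eta\le\tilde\mu/L^2$. The bias step, however, needs $\|\EE[D_t(\theta_t-\theta^\omega_\pi)]\|_2\le(1+\gamma)C\rho^\tau\|e_t\|_2$. Lemma~\ref{lemma:mixing} only gives this with $\EE\|\theta_t-\theta^\omega_\pi\|_2$ in place of $\|e_t\|_2$, and Jensen runs the wrong way.

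Your first remedy conflates \emph{depends on the past only through the last state} with \emph{deterministic}. The law $\nu_{\tau|t}$ is a function of the terminal state of $\xi^\tau_{t-1}$. That state is random, and it was itself used, through $\hat F(\theta_{t-1},\xi^\tau_{t-1})$, to form $\theta_t$. So $D_t$ is correlated with $\theta_t-\theta^\omega_\pi$, and $\EE[D_t(\theta_t-\theta^\omega_\pi)]$ is not of the form $D e_t$ with $\|D\|_{\mathrm{op}}\le(1+\gamma)C\rho^\tau$. That would require each block to restart from a fixed state or from $\nu^\omega_\pi$, which Algorithm~\ref{alg:TD} does not do.

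Your second remedy and its conditional variant only control $\|e_{t+1}\|_2^2$ through $\EE\|\theta_t-\theta^\omega_\pi\|_2^2$. This does not unroll into a recursion in $\|e_t\|_2$. Moreover, the only available bound on that second moment (Lemma~\ref{lemma:TD-mse}) has a non-vanishing additive term, so no geometric decay to zero results.

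The paper's proof is, in effect, your first remedy applied without justification. It writes $\EE\theta_{t+1}=\EE\theta_t-\eta F(\EE\theta_t)-\eta\EE\zeta_t$ and then bounds $-2\eta\langle\EE\zeta_t,\EE\theta_t-\theta^\omega_\pi\rangle$ by $2\eta(1+\gamma)C\rho^\tau\|\EE\theta_t-\theta^\omega_\pi\|_2^2$, citing Lemma~\ref{lemma:mixing}. Like you, it also drops the $\eta^2$ contribution of $\EE\zeta_t$ to the quadratic term. So your diagnosis is sharper than the written argument.

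You also inherited a further problem. The decomposition $\EE[\hat F(\theta_t,\xi^\tau_t)\mid\cF_{t-1}]=F(\theta_t)+D_t(\theta_t-\theta^\omega_\pi)$ misses the additive term $\int\phi(z)\,\delta^\star(z)\,\nu_{\tau|t}(\diff z)$, where $\delta^\star=Q_{\theta^\omega_\pi}-T^\omega_\pi Q_{\theta^\omega_\pi}$. This term integrates to zero against $\nu^\omega_\pi$ (that is the projected Bellman equation) but not against $\nu_{\tau|t}$. Hence the statement cannot be proved in this generality. Take $d=1$, $\phi\equiv1$, one action, $K=1$, and a two-state chain with nonconstant $r$ started at a fixed state. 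Then $D_t\equiv0$, yet for $\theta_0=\theta^\omega_\pi$ one gets $\EE\theta_1-\theta^\omega_\pi=\eta\big(\EE r(S^\tau_0)-\int r\,\diff\nu^\omega_\pi\big)\neq0$, while the claimed bound is $0$.

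A correct version needs one of the following:
\begin{enumerate}
\item block starts drawn independently from $\nu^\omega_\pi$;
\item independent fixed-state restarts together with $\delta^\star\equiv0$;
\item otherwise, a weaker conclusion: geometric decay plus an additive $\mathcal O(\rho^\tau)$ floor, obtained from a recursion that carries $\EE\|\theta_t-\theta^\omega_\pi\|_2$.
\end{enumerate}
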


\begin{proof}
We have
$$
\begin{aligned}
&\|\mb E^\omega_\pi[\theta_{t+1}]-\theta^\omega_\pi\|_2^2 
\\ =  &\|\mb E^\omega_\pi[\theta_t]-\eta F(\mb E^\omega_\pi[\theta_t]) - \eta \mb E^\omega_\pi [\zeta_t] -\theta^\omega_\pi\|_2^2 \\
\leq &  \|\mb E^\omega_\pi[\theta_t]-\theta^\omega_\pi\|_2^2-2 \eta\langle F(\mb E^\omega_\pi[\theta_t]), \mb E^\omega_\pi[\theta_t]-\theta^\omega_\pi\rangle - 2 \eta \langle \mb E^\omega_\pi [\zeta_t], \mb E^\omega_\pi[\theta_t] - \theta^\omega_\pi\rangle +\eta^2\|F(\mb E^\omega_\pi[\theta_t])\|_2^2
\\ \le  & \|\mb E^\omega_\pi[\theta_t]-\theta^\omega_\pi\|_2^2-2 \mu  \eta\|\mb E^\omega_\pi[\theta_t]-\theta^\omega_\pi\|_2^2 + 2 \eta (1+\gamma) C\rho^\tau \|\mb E^\omega_\pi[\theta_t]-\theta^\omega_\pi\|_2^2 +L^2 \eta^2\|\mb E^\omega_\pi[\theta_t]-\theta^\omega_\pi\|_2^2
\\ \le  & (1-2\eta \tilde \mu  + L^2\eta^2) \|\mb E^\omega_\pi[\theta_t]-\theta^\omega_\pi\|_2^2,
\end{aligned}
$$
where the first equality follows by the update rule in Line~\ref{step:TD} of Algorithm~\ref{alg:TD}, and the second inequality holds because of Lemma~\ref{lemma-property-F}\ref{item:F:strong:monotone}, Lemma~\ref{lemma:mixing}, and Lemma~\ref{lemma-property-F}\ref{item:F-ub}. The claim then follows from the choice of~$\eta.$ 
\end{proof}
As will be clarified later in Theorem \ref{thm-TD-V}, Lemma \ref{lemma-bias-conv} will be useful in showing that the output of Algorithm \ref{alg:TD} satisfies the first inequality of Condition \ref{assu-approx-H-V}.
\revise{
To certify the remaining requirements in Condition~\ref{assu-approx-H-V},
we proceed to introduce the following Lemma \ref{lemma:TD-mse} establishing that the distance between the iterate $\theta_t$ generated by Algorithm \ref{alg:TD} and the optimal solution remains bounded in Algorithm \ref{alg:TD} in expectation. 
In particular, we will build upon Lemma~\ref{lemma:TD-mse} and later control the same distance in high probability. 
}

\begin{lemma}\label{lemma:TD-mse}
Suppose that Assumption~\ref{assu:geometric-ergodicity} holds, set $\tau > (\log(\mu)-\log(C(1+\gamma))) /\log(\rho)$ and define  $\tilde\mu  = \mu  - (1+\gamma) C\rho^\tau$. 
If $\eta\le \tilde \mu/8$, then
\begin{align}\label{td_norm_recursive}
  \left\|\theta_{t+1}-\theta^\omega_\pi\right\|_2^2 \leq\left\|\theta_t-\theta^\omega_\pi\right\|_2^2+2 \eta^2 U^2+2 \eta\left\langle\zeta_t, \theta_t-\theta^\omega_\pi\right\rangle  
\end{align}
and
$$
\mb E^\omega_\pi[\|\theta_t-\theta^\omega_\pi\|_2^2] \leq \|\theta_0-\theta^\omega_\pi\|_2^2+\frac{U^2\tilde \mu^2}{64}.
$$
\end{lemma}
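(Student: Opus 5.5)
The plan is to expand one step of the TD update. We then split the stochastic direction into its mean part $F(\theta_t)$ and the error $\zeta_t$ from \eqref{def_td_err_zeta}, and use Lemma~\ref{lemma-property-F} together with the step-size restriction $\eta\le\tilde\mu/8$ to absorb every term that is quadratic in $\|\theta_t-\theta^\omega_\pi\|_2$. Throughout we write $\theta^\star=\theta^\omega_\pi$ and $e_t=\|\theta_t-\theta^\star\|_2^2$. Two facts are used repeatedly: $\tilde\mu\le\mu\le\lambda_{\min}(\Sigma)\le 1$, which holds because $\operatorname{tr}\Sigma\le 1$ when $\|\phi\|_2\le1$; and $\eta\le\tilde\mu/8\le 1/8$.

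\emph{Step 1 (pathwise recursion \eqref{td_norm_recursive}).} From Line~\ref{step:TD} of Algorithm~\ref{alg:TD},
\begin{align*}
e_{t+1}=e_t-2\eta\langle F(\theta_t),\theta_t-\theta^\star\rangle-2\eta\langle\zeta_t,\theta_t-\theta^\star\rangle+\eta^2\|\hat F(\theta_t,\xi_t^\tau)\|_2^2 .
\end{align*}
The sign of the $\zeta_t$ term is immaterial for the later martingale arguments, since only its conditional mean and magnitude enter. To bound the last term, we use the fact that $\hat F(\cdot,\xi)$ is affine in $\theta$ and that $\|\phi\|_2\le1$, $r\in[0,1]$. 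This gives $\|\hat F(\theta^\star,\xi)\|_2\le(1+\gamma)R+1\le U$ and $\|\hat F(\theta,\xi)-\hat F(\theta^\star,\xi)\|_2\le(1+\gamma)\|\theta-\theta^\star\|_2\le 2\|\theta-\theta^\star\|_2$. Hence $\|\hat F(\theta_t,\xi_t^\tau)\|_2^2\le 2U^2+8e_t$. By Lemma~\ref{lemma-property-F}\ref{item:F:strong:monotone}, $-2\eta\langle F(\theta_t),\theta_t-\theta^\star\rangle\le-2\eta\mu e_t$. The leftover quadratic contribution is therefore $(-2\eta\mu+8\eta^2)e_t$, which is nonpositive once $\eta\le\mu/4$, and this is implied by $\eta\le\tilde\mu/8$. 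Dropping it gives \eqref{td_norm_recursive}.

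\emph{Step 2 (expectation recursion).} We keep the strong-monotonicity term instead of discarding it, and take conditional expectations given $\mathcal F_{t-1}$; note that $\theta_t$ is $\mathcal F_{t-1}$-measurable. Lemma~\ref{lemma:mixing} gives $|\mathbb E[\langle\zeta_t,\theta_t-\theta^\star\rangle\mid\mathcal F_{t-1}]|\le(1+\gamma)C\rho^\tau e_t$. This yields
\begin{align*}
\mathbb E[e_{t+1}]\le(1-2\eta\tilde\mu+8\eta^2)\,\mathbb E[e_t]+2\eta^2U^2\le(1-\eta\tilde\mu)\,\mathbb E[e_t]+2\eta^2U^2 ,
\end{align*}
where the last inequality uses $8\eta\le\tilde\mu$. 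Unrolling, or arguing by induction that $\mathbb E[e_t]\le e_0+c$ is preserved whenever $\eta\tilde\mu\, c\ge 2\eta^2U^2$, gives $\mathbb E[e_t]\le e_0+(\text{stationary level})$.

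\emph{Main obstacle (the constant).} The crude geometric sum gives a stationary level of $2\eta U^2/\tilde\mu$, which is only $\le U^2/4$ under $\eta\le\tilde\mu/8$. Reaching the stated $U^2\tilde\mu^2/64$ therefore requires extracting an additional factor of order $\tilde\mu^2$ from the noise term. I would first try to sharpen the second-moment bound in Step 1 by tracking constants through $\eta^2\|\hat F\|^2$. The idea is to use $\eta\le\tilde\mu/8$ directly on $\eta^2 U^2$, which gives at most $\eta U^2\tilde\mu/8$ per step. Combined with the contraction $1-\eta\tilde\mu$, this produces a level of order $U^2\tilde\mu^2/64$ provided the noise injection per step can be written as $\eta\tilde\mu\cdot(\eta U^2\tilde\mu/8)$. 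This accounting is the delicate point, since the per-step noise must carry an extra factor of $\tilde\mu$ relative to the estimate above. If it cannot be arranged, I would use the induction form of Step 2 with the target $c=U^2\tilde\mu^2/64$ and check exactly which inequality between $\eta$, $\tilde\mu$, and the noise constant is needed for the induction to close. I expect this constant-chasing, rather than the structure of the argument, to be where the real work lies.
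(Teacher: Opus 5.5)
Your Steps 1 and 2 are the paper's proof. The paper reaches the same pathwise bound $e_{t+1}\le(1-2\eta\mu+8\eta^2)e_t+2\eta^2U^2-2\eta\langle\zeta_t,\theta_t-\theta^\star\rangle$, via a Young-type estimate on $\eta\langle\hat F,\theta_{t+1}-\theta_t\rangle+\tfrac12\|\theta_{t+1}-\theta_t\|_2^2$ rather than your direct expansion. It reaches the same recursion $\mathbb{E}[e_{t+1}]\le(1-2\eta\tilde\mu+8\eta^2)\mathbb{E}[e_t]+2\eta^2U^2$, and then only says the second claim ``also follows''. Both you and the paper actually prove \eqref{td_norm_recursive} with $-2\eta\langle\zeta_t,\theta_t-\theta^\star\rangle$. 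The $+$ in the statement is a sign slip, harmless for Proposition~\ref{prop-bounded-TD} once the concentration argument is run on $-X_t$, as you say.

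The real gap is the final constant $U^2\tilde\mu^2/64$, and neither of your repairs can close it.
\begin{itemize}
\item Writing $2\eta^2U^2=\eta\tilde\mu\cdot(2\eta U^2/\tilde\mu)\le\eta\tilde\mu\cdot U^2/4$ and dividing by the contraction rate $\eta\tilde\mu$ still gives the level $U^2/4$. No extra factor $\tilde\mu^2$ appears.
\item Your induction with $c=U^2\tilde\mu^2/64$ closes only if $\eta\tilde\mu c\ge2\eta^2U^2$, i.e.\ $\eta\le\tilde\mu^3/128$. The hypothesis $\eta\le\tilde\mu/8$ does not give this.
\end{itemize}

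In fact the claimed bound is false under the lemma's hypotheses. Take $d=1$, $\phi\equiv1$, $|\mathcal{A}|=|\mathcal{K}|=1$, $\mathcal{S}=\{0,1\}$, i.i.d.\ transitions with law $\nu(\{1\})=1/4$, $r(s,a)=s$, and $\gamma=0.9$.
\begin{itemize}
\item The mixing assumption holds for every $C>0$, so $\tilde\mu$ can be taken arbitrarily close to $\mu=1-\gamma=0.1$.
\item Here $\theta^\star=R=2.5$ and $U=6$.
\item The iteration is $\theta_{t+1}-\theta^\star=(1-\eta\mu)(\theta_t-\theta^\star)+\eta(r_t-\tfrac14)$ with i.i.d.\ $r_t$ of variance $3/16$.
\item Starting from $\theta_0=\theta^\star$ with $\eta=\tilde\mu/8$, one gets $\mathbb{E}[e_t]\uparrow\eta\cdot\tfrac{3}{16}/(\mu(2-\eta\mu))\approx0.0117$.
\item But $U^2\tilde\mu^2/64\approx0.0056$, so the bound fails for large $t$.
\end{itemize}

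What your recursion does yield is $\mathbb{E}[e_t]\le e_0+2\eta U^2/\tilde\mu\le e_0+U^2/4$, from your own induction with $c=2\eta U^2/\tilde\mu$. A bound of order $U^2\tilde\mu^2$ needs the downstream choices $\eta=\nu/\sqrt{T}$, $\nu\le\tilde\mu/8$ and $t\le T$, which are not hypotheses of the lemma. Under them, simply dropping the contraction gives $\mathbb{E}[e_t]\le e_0+2t\eta^2U^2\le e_0+2\nu^2U^2\le e_0+U^2\tilde\mu^2/32$. That is what the mean-square bound in Theorem~\ref{thm-TD-V} needs, up to constants. So your proposal is incomplete exactly where the paper's proof is. The right fix is to the statement (a weaker constant, or these added hypotheses), not more constant-chasing.
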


\begin{proof}
Observe first that by the construction of $\hat F$, we have
\begin{equation}\label{ineq:mse:L-F-xi}
    \|\hat{F}(\theta_t, \xi_t^\tau) - \hat{F}(\theta^\omega_\pi, \xi_t^\tau)\|_2
    =\|\phi(S_{t+\tau})- \gamma \phi(S_{t+\tau+1})\|_2\|\theta_t - \theta^\omega_\pi\|_2  \phi(S_{t+\tau})\|_2 \le 2\|\theta_t-\theta^\omega_\pi\|_2,
\end{equation}
and
\begin{equation}\label{ineq:mse:bounded-F-xi}
    \|\hat{F}(\theta^\omega_\pi, \xi_t^\tau)\|_2 = \|(\langle\phi(S_{t+\tau})-\gamma\phi(S_{t+\tau+1}), \theta_t\rangle-r(S_{t+\tau},A_{t+\tau})) \phi(S_{t+\tau})\|_2
    \le 2\|\theta^\omega_\pi\|_2 + 1 \le U.
\end{equation}
We then have
\begin{equation}\label{ineq:mse:cs1}
\begin{split}
    \langle \hat{F}(\theta_t, \xi_t^\tau), \theta_{t+1} - \theta_t \rangle 
    &= \langle \hat{F}(\theta_t, \xi_t^\tau) - \hat{F}(\theta^\omega_\pi, \xi_t^\tau) + \hat{F}(\theta^\omega_\pi, \xi_t^\tau), \theta_{t+1} - \theta_t \rangle 
    \\&\ge -2\|\theta_t-\theta^\omega_\pi\|_2 \| \theta_{t+1}-\theta_t\|_2 - U\|\theta_{t+1}-\theta_t\|_2,
\end{split}
\end{equation}
where the inequality follows from Cauchy-Schwarz inequality and the previous observations~\eqref{ineq:mse:L-F-xi} and~\eqref{ineq:mse:bounded-F-xi}.
We then obtain
\begin{equation}\label{ineq:mse:young2}
\begin{split}
    \eta \langle\hat{F}  (\theta_t, Z_t^\tau), \theta_{t+1}-\theta_t \rangle +\frac{1}{2}\|\theta_{t+1}-\theta_t\|_2^2     
    &\ge   \|\theta_{t+1}-\theta_t\|_2 \left(-\eta (2\|\theta_t-\theta^\omega_\pi\|_2+U) + \frac{1}{2}\|\theta_{t+1}-\theta_t\|_2\right) 
    \\& \ge   -\frac{1}{2}\eta^2 (2\|\theta_t-\theta^\omega_\pi\|_2+U)^2 
    \ge  - \eta^2 (4\|\theta_t-\theta^\omega_\pi\|_2^2 + U^2),
\end{split} 
\end{equation} 
where the first inequality follows from~\eqref{ineq:mse:cs1},  the second inequality holds 
\revise{since
$ bx - a x^2  \leq  b^2 /(4 a)$ for any $a>0$ and $x, b \in \RR$,}
and the third inequality uses $(a+b)^2 \le 2a^2 + 2b^2.$
Note that the update rule in Line~\ref{step:TD} of Algorithm~\ref{alg:TD} implies
\begin{equation*}
    \eta \langle \hat{F}(\theta_t, Z_t^\tau), \theta_t - \theta^\omega_\pi \rangle + \eta \langle \hat{F}(\theta_t, Z_t^\tau), \theta_{t+1}-\theta_t\rangle + \frac{1}{2} \|\theta_{t+1}-\theta_t\|_2^2= \frac{1}{2}\|\theta^\omega_\pi-\theta_t\|_2^2 - \frac{1}{2}\|\theta^\omega_\pi-\theta_{t+1}\|_2^2.
\end{equation*}
Lower bounding the left-hand side of the above expression using~\eqref{ineq:mse:young2} then yields
\begin{equation*}
    \eta \langle \hat{F}(\theta_t, \xi_t^\tau), \theta_t - \theta^\omega_\pi \rangle - 4 \eta^2 \|\theta_t - \theta^\omega_\pi\|_2^2 - U^2 \eta^2 \le \frac{1}{2}\|\theta^\omega_\pi-\theta_t\|_2^2 - \frac{1}{2}\|\theta^\omega_\pi-\theta_{t+1}\|_2^2.
\end{equation*}
Rearranging terms in the above expression further shows
\begin{equation*}
\begin{split}
   \frac{1}{2}\|\theta^\omega_\pi-\theta_{t+1}\|_2^2 
   &\le \left(\frac{1}{2} + 4 \eta^2\right) \|\theta_t-\theta^\omega_\pi\|_2^2 + U^2\eta^2 - \eta\langle\zeta_t,\theta_t-\theta^\omega_\pi\rangle -\eta\langle F(\theta_t), \theta_t - \theta^\omega_\pi\rangle
   \\&\le \left(\frac{1}{2} - \eta \mu  + 4 \eta^2\right) \|\theta_t-\theta^\omega_\pi\|_2^2 + U^2\eta^2 - \eta\langle\zeta_t,\theta_t-\theta^\omega_\pi\rangle ,
\end{split}
\end{equation*}
where the second inequality follows from Lemma~\ref{lemma-property-F}\ref{item:F:strong:monotone}.
The first claim then follows by noting that $\eta\le\tilde\mu /8$.
Taking expectation and multiplying by $2$ of the above expression yields
\begin{align*}
    \mb E^\omega_\pi [\|\theta^\omega_\pi-\theta_{t+1}\|_2^2 ] &\le \left(1 - 2\eta \mu  + 8 \eta^2\right) \|\theta_t-\theta^\omega_\pi\|_2^2 + 2U^2\eta^2 - 2\eta\langle\mb E^\omega_\pi [\zeta_t],\theta_t-\theta^\omega_\pi\rangle
    \\&\le \left(1 - 2\eta \tilde\mu  + 8 \eta^2\right) \|\theta_t-\theta^\omega_\pi\|_2^2 + 2U^2\eta^2,
\end{align*}
where the second inequality follows from Lemma~\ref{lemma:mixing}. 
Thus, the second claim also follows.
\end{proof}

\revise{Clearly, in view of Lemma \ref{lemma:TD-mse}, the third inequality in Condition \ref{assu-approx-H-V} is satisfied. 
Hence going forward we will focus on certifying the second inequality of Condition \ref{assu-approx-H-V}.
To this end,}
we proceed to establish the following high probability boundedness guarantee on the iterate~$\theta_t$ generated by Algorithm~\ref{alg:TD}. 
It should be noted that as the norm of the stochastic error $\zeta_t$ (defined in \eqref{def_td_err_zeta}) itself depends on the norm of the iterate $\theta_t$, \revise{one cannot simply take the telescopic sum of \eqref{td_norm_recursive} followed by invoking a standard off-the-shelf concentration argument to control the accumulation of stochastic errors (and subsequently the norm of $\theta_t$) in a high probability sense.}
In the following proposition, we construct an approximate martingale sequence that in turn will help us bound the accumulated stochastic error in stochastic approximation algorithms with high probability, 
which might be of independent interest.

\begin{proposition}
\label{prop-bounded-TD}
Suppose that Assumption~\ref{assu:geometric-ergodicity} holds.
Fix the total number of iterations $T>0$ and set $\theta_0=0$. For any $\delta \in(0,1)$, set 
$\tau> \max\{-\log(\sqrt{T}), \log(\mu)-\log(C(1+\gamma))\}/\log(\rho)$ 
and $\eta=\nu / \sqrt{T}$ 
with
$$
\begin{aligned}
& \nu = \min \left\{\frac{\tilde\mu }{L^2+8}, \frac{1}{2 U}, \frac{1}{4 G}\right\},
\ G=\max \left\{2 (1+\gamma) C (R^2+1), 4 \sqrt{\log \left(\frac{2T}{\delta}\right)}\left[(L+2)(R^2+1)+U (R+1)\right]\right\}.
\end{aligned}
$$
Then with probability at least $1-\delta$, we have
$$
\left\|\theta_t-\theta^\omega_\pi\right\|_2^2 \leq R^2+1, \quad\forall t \leq T .
$$
\end{proposition}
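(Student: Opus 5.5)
We will argue by a stopping-time (truncation) argument built on the recursion \eqref{td_norm_recursive} of Lemma~\ref{lemma:TD-mse}. Since $\eta=\nu/\sqrt{T}\le\tilde\mu/8$, the recursion is available. Telescoping from $0$ to $t$ with $\theta_0=0$ gives
\[
\|\theta_{t+1}-\theta^\omega_\pi\|_2^2\le R^2+2\eta^2U^2(t+1)+2\eta\sum_{j=0}^{t}\langle\zeta_j,\theta_j-\theta^\omega_\pi\rangle .
\]
Here $\|\theta_0-\theta^\omega_\pi\|_2\le R$. The deterministic term satisfies $2\eta^2U^2T\le 2\nu^2U^2\le 1/2$, by $\nu\le 1/(2U)$. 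It therefore suffices to show that, with probability $1-\delta$, the stochastic sum satisfies $2\eta\sum_{j\le t}\langle\zeta_j,\theta_j-\theta^\omega_\pi\rangle\le 1/2$ for every $t\le T$, \emph{on the event that all previous iterates are in the ball}.

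To handle the dependence of $\zeta_j$ on $\|\theta_j\|$, we define the stopping time $\sigma=\min\{t:\|\theta_t-\theta^\omega_\pi\|_2^2>R^2+1\}$. We then work with the truncated increments $X_j=\langle\zeta_j,\theta_j-\theta^\omega_\pi\rangle\mathbf 1\{j<\sigma\}$. Since $\{j<\sigma\}$ is $\mathcal F_{j-1}$-measurable, each $X_j$ is bounded whenever the indicator is active. Indeed, by \eqref{ineq:mse:L-F-xi}--\eqref{ineq:mse:bounded-F-xi} and Lemma~\ref{lemma-property-F}\ref{item:F-ub}, on $\{j<\sigma\}$ we have $\|\zeta_j\|_2\le (L+2)\|\theta_j-\theta^\omega_\pi\|_2+U$. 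Hence
\[
|X_j|\le (L+2)(R^2+1)+U(R+1).
\]
This is exactly the bracket appearing in $G$.

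We decompose $X_j=(X_j-\mathbb E[X_j\mid\mathcal F_{j-1}])+\mathbb E[X_j\mid\mathcal F_{j-1}]$.

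\textbf{Bias part.} By Lemma~\ref{lemma:mixing}, the bias satisfies $|\mathbb E[X_j\mid\mathcal F_{j-1}]|\le(1+\gamma)C\rho^\tau(R^2+1)$. The choice of $\tau$ gives $\rho^\tau\le 1/\sqrt T$. Summing over $j\le T$ and multiplying by $2\eta=2\nu/\sqrt T$, the bias contributes at most $2\nu(1+\gamma)C(R^2+1)\le \nu G\le 1/4$, since $G\ge 2(1+\gamma)C(R^2+1)$ and $\nu\le 1/(4G)$.

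\textbf{Martingale part.} The remainder is a martingale difference sequence with increments bounded by twice the bracket. Azuma–Hoeffding, applied for each $t$ and combined with a union bound over $t\le T$ at level $\delta/T$, gives with probability $1-\delta$ that
\[
\sum_{j\le t}(X_j-\mathbb E[X_j\mid\mathcal F_{j-1}])\le 4\sqrt{\log(2T/\delta)}\,[\cdots]\sqrt{T}\le G\sqrt T
\]
for all $t$. Multiplying by $2\eta$ gives at most $2\nu G\le 1/2$. Tightening constants gives exactly $1/4$ if we use $\sqrt{2t\log(2T/\delta)}$ with range $2[\cdots]$; I expect the constants in $G$ to be chosen precisely to make this work, possibly combined with absorbing the slack from $2\eta^2U^2T$.

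Finally, on this good event we show by induction that $\sigma>T$. Suppose $\sigma=t+1\le T$. Then all indicators up to $t$ equal $1$, so the truncated sums coincide with the true ones. The telescoped bound then gives $\|\theta_{t+1}-\theta^\omega_\pi\|_2^2\le R^2+1/2+1/4+1/4\le R^2+1$, contradicting the definition of $\sigma$.

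The main obstacle is precisely the circularity between the noise magnitude and the iterate norm. The truncation by the predictable indicator $\mathbf 1\{j<\sigma\}$ is what breaks it, because it makes both the Azuma increments and the mixing bias bounded. The second delicate point is bookkeeping the constants so that bias, martingale and $\eta^2$ terms sum to at most $1$. The role of the condition $\eta\le\tilde\mu/(L^2+8)$ is only to license Lemma~\ref{lemma:TD-mse}.
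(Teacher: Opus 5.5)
Your proposal is correct and is essentially the paper's argument. Both use a predictable truncation that bounds the increments $\langle\zeta_j,\theta_j-\theta^\omega_\pi\rangle$ by $b=(L+2)(R^2+1)+U(R+1)$, control the mixing bias via Lemma~\ref{lemma:mixing} with $\rho^\tau\le 1/\sqrt T$, apply Hoeffding/Azuma with a union bound over $t\le T$, and close with an induction showing the truncation is inactive on the good event. The only difference is that the paper truncates on the running noise sum $\mathcal G_t=\{Y_t\le G\sqrt t\}$ rather than on your exit time $\sigma$ from the ball of radius $\sqrt{R^2+1}$.

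For the constants, apply Hoeffding's lemma with the range $2b$ of $X_j$, i.e.\ sub-Gaussian proxy $b$. This bounds the martingale contribution by $\tfrac{\sqrt2}{2}\nu G\le\tfrac{\sqrt2}{8}$, so the total is $\tfrac12+\tfrac14+\tfrac{\sqrt2}{8}\le 1$. Your first estimate $2\nu G\le\tfrac12$ is too loose: it would push the bound past $R^2+1$.
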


\begin{proof}
Define random sequences $\{X_t=\langle\zeta_t, \theta_t-\theta^\omega_\pi\rangle\},\{\widetilde{X}_t=\langle\zeta_t, \theta_t-\theta^\omega_\pi\rangle \mathds{1}_{\mathcal{G}_t}\}$, where $\mathcal{G}_t=\left\{Y_t \leq G \sqrt{t}\right\}$, and
$$
Y_0 = \widetilde{Y}_0 = 0, \ Y_t=Y_{t-1}+X_{t-1}, \ \tilde{Y}_t=\widetilde{Y}_{t-1}+\widetilde{X}_{t-1}.
$$
Conditioning on the event $\mathcal{G}_t$ and defining $M_{(\nu, G)}=R^2+2 \nu^2 U^2+2 \nu G$, recursively applying Lemma~\ref{lemma:TD-mse} and using the parameter choice of $\eta$ yields
\begin{align}\label{ineq-mse-thetabound}
    \|\theta_t-\theta^\omega_\pi\|_2^2 \leq R^2+2 t \eta^2 U^2+2 \eta G \sqrt{t}  =R^2+2 \nu^2 U^2+2 \nu G=M_{(\nu,G)},
\end{align}
We proceed to show that the above bound happens with probability at least $1-\delta$ given a proper choice of $(\nu, G)$.
First, note that
\begin{align*}
\langle\zeta_t, \theta_t-\theta^\omega_\pi\rangle & =\langle\hat{F}_t(\theta_t,\xi^\tau_t)-\hat{F}_t(\theta^\omega_\pi,\xi^\tau_t), \theta_t-\theta^\omega_\pi\rangle-\langle F(\theta_t), \theta_t-\theta^\omega_\pi\rangle+\langle\hat{F}_t(\theta^\omega_\pi,\xi^\tau_t), \theta_t-\theta^\omega_\pi\rangle \\
& \leq \|\hat{F}_t(\theta_t,\xi^\tau_t)-\hat{F}_t(\theta^\omega_\pi,\xi^\tau_t)\|_2 \|\theta_t-\theta^\omega_\pi\|_2+ \|F(\theta_t)\|_2 \|\theta_t-\theta^\omega_\pi\|_2 + \|\hat{F}_t(\theta^\omega_\pi,\xi^\tau_t)\|_2 \|\theta_t-\theta^\omega_\pi\|_2\\
& \leq(L+2)\|\theta_t-\theta^\omega_\pi\|_2^2+U\|\theta_t-\theta^\omega_\pi\|_2,
\end{align*}
where the first inequality follows from Cauchy-Schwarz inequality, and 
the second inequality uses the bounds~\eqref{ineq:mse:L-F-xi}, Lemma~\ref{lemma-property-F}\ref{item:F-ub}, and~\eqref{ineq:mse:bounded-F-xi}.
We then have
$$
\big|\widetilde{X}_t\big|=\left|\left\langle\zeta_t, \theta_t-\theta^\omega_\pi\right\rangle \mathds{1}_{\mathcal{G}_t}\right| \leq(L+2) M_{(\nu, G)}+U \sqrt{M_{(\nu, G)}}.
$$
In addition, observe that
\begin{equation}\label{ineq:high-prob-cond-e}
    \big|\mb E^\omega_\pi[\tilde X_t | \mac F_{t-1}]\big| = |\mb E^\omega_\pi[\langle \zeta_t, \theta_t-\theta^\omega_\pi\rangle]\mathds{1}_{\mac G_t}| \le (1+\gamma) C\rho^\tau\left\|\theta_t-\theta^\omega_\pi\right\|_2^2\mathds{1}_{\mac G_t}\le (1+\gamma) C\rho^\tau M_{(\nu,G)},
\end{equation}
where the equality holds because $\mathds{1}_{\cG_t}$ is $\cF_{t-1}$-measurable, the first inequality follows from Lemma~\ref{lemma:mixing}, and the second inequality uses~\eqref{ineq-mse-thetabound}.
Let $b=(L+2) M_{(\nu, G)}+U \sqrt{M_{(\nu, G)}}$. Chernoff bound then implies
$$
\begin{aligned}
\mathbb{P}\left(\tilde Y_t \geq x\right) & \leq \min _{\lambda>0} \exp (-\lambda x) \cdot \mb E^\omega_\pi\left[\exp \left(\sum_{i=0}^{t-1} \lambda\left(\tilde Y_{ i+1}-\tilde Y_i\right)\right)\right] \\
& = \min _{\lambda>0} \exp (-\lambda x) \cdot \mb E^\omega_\pi\left[\exp \left(\sum_{i=0}^{t-2} \lambda\left(\tilde Y_{ i+1}-\tilde Y_i\right)\right) \cdot \mb E^\omega_\pi\left[\exp\left(\lambda\left(\tilde Y_t- \tilde Y_{t-1}\right)\right) \mid \mathcal{F}_{t-1}\right]\right] \\
& \le \min _{\lambda>0} \exp (-\lambda x) \cdot \mb E^\omega_\pi\left[\exp \left(\sum_{i=0}^{t-2} \lambda\left(\tilde Y_{ i+1}-\tilde Y_i\right)\right)\right] \cdot \exp \left(\lambda (1+\gamma) C\rho^\tau M_{(\nu,G)}+\frac{b^2 \lambda^2}{2}\right) \\
& \le \min _{\lambda>0} \exp (-\lambda x) \cdot \exp \left(t \lambda (1+\gamma) C\rho^\tau M_{(\nu,G)}+\frac{t b^2 \lambda^2}{2}\right) \\
& =\exp \left(\min _{\lambda>0}-(x-t (1+\gamma) C\rho^\tau M_{(\nu,G)}) \lambda+\frac{t b^2 \lambda^2}{2}\right)=\exp \left(-\frac{(x-t (1+\gamma) C\rho^\tau M_{(\nu,G)})^2}{2 t b^2}\right),
\end{aligned}
$$
where the second inequality follows from Hoeffding's lemma and~\eqref{ineq:high-prob-cond-e}, and the third inequality holds by recursive application of the second inequality. Thus, for any $\delta \in(0,1)$, by applying the union bound over $1 \leq t \leq T$  in the above inequality, we have
\[\tilde Y_t \leq t (1+\gamma) C\rho^\tau M_{(\nu,G)}+2 \left((L+2) M_{(\nu, G)}+U \sqrt{M_{(\nu, G)}}\right) \sqrt{\log \left(\frac{2T}{\delta}\right) t}, \quad\forall t \in[T],\] 
with probability at least $1-\delta$. By construction of~$\tau$, $G$, and $\nu,$ we have $t (1+\gamma) C\rho^\tau M_{(\nu,G)}\le G\sqrt{t}/2$, $2 (L+2) M_{(\nu, G)} \sqrt{\log \left(\frac{2T}{\delta}\right) t} \le G\sqrt{t}/4$, together with $2 U \sqrt{M_{(\nu, G)}} \sqrt{\log \left(\frac{2T}{\delta}\right) t} \le G\sqrt{t}/4$. Thus,
$\tilde Y_t \leq G\sqrt{t}$ for all $ t \in[T]$ with probability $1-\delta$.
Denote by $\cG=\{\tilde Y_t \leq G\sqrt{t} \ \forall t\in[T]\}$.
We now use induction to show that $Y_t = \tilde Y_t $ over~$\mac G$ for all $t\le T.$ Note that the claim holds trivially at $t=0.$ 
Suppose that the claim holds at iteration $t\ge 0$, then for any $\omega\in\mac G$, we have
\begin{align*}
Y_{t+1}(\omega)   =Y_t(\omega)+X_t(\omega)  &=Y_t(\omega)+X_t(\omega) \mathds{1}_{\left\{\tilde{Y}_t \leq G \sqrt{t}\right\}}(\omega) \\
& = Y_t(\omega)+X_t(\omega) \mathds{1}_{\left\{Y_t \leq G \sqrt{t}\right\}}(\omega)=Y_t(\omega)+\widetilde{X}_t(\omega) =\widetilde{Y}_{t+1}(\omega),
\end{align*}
where the second equality follows from the definition of~$\mac G,$ the third equality applies the induction hypothesis, whereas the fourth and fifth equalities follow from the definitions of~$\tilde X_t$ and $\tilde Y_{t+1},$ respectively. The induction is then complete. We may now deduce that 
\begin{equation*}
    \|\theta_t-\theta^\omega_\pi\|_2^2 \le M_{(\nu,G)} = R^2+2 \nu^2 U^2+2 \nu G\leq R^2+1 \quad\forall t \leq T ,
\end{equation*}
where the first inequality follows from the induction claim, the equality holds by definition of $M_{(\nu,G)}$, and the second inequality uses the choice of~$\nu$ that $\nu\le \min\{1/(2U),1/(4G)\}.$ Hence, the claim follows.
\end{proof}

We are now ready to show that with a proper specification of parameters $(\eta, T)$, Algorithm~\ref{alg:TD} indeed produces a solution that certifies  Condition~\ref{assu-approx-H-V}.
\begin{theorem}\label{thm-TD-V}
Under the same setup of Proposition~\ref{prop-bounded-TD}, with probability at least $1-\delta$, the following hold for all $t\le T$.
\begin{enumerate}[label = (\roman*)]
    \item \label{item-bias-TD-V} $\|\mb E^\omega_\pi[Q_{\theta_t}]-Q^\omega_\pi\|_{\infty} \leq(1-\eta \tilde\mu )^{t / 2}R+\varepsilon_{\rm{approx}}$;
    \item \label{item-bounded-TD-V} $\| Q_{\theta_t} \|_\infty \le R+1+\varepsilon_{\rm{approx}}$;
    \item \label{item-mse-TD-V} $\displaystyle \mb E^\omega_\pi[\|Q_{\theta_t}-Q^\omega_\pi\|_\infty^2] \le R^2 +\frac{U^2\tilde \mu^2}{64} + \varepsilon_{\rm{approx}}^2$.
\end{enumerate}
\end{theorem}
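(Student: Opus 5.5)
The plan is to decompose every quantity through the projected fixed point $Q_{\theta^\omega_\pi}=\phi(\cdot)^\top\theta^\omega_\pi$. For any $z\in\cZ$ we write
\[
Q_{\theta_t}(z)-Q^\omega_\pi(z)=\phi(z)^\top(\theta_t-\theta^\omega_\pi)+\big(Q_{\theta^\omega_\pi}(z)-Q^\omega_\pi(z)\big).
\]
The second term is bounded in absolute value by $\varepsilon_{\rm approx}$ by definition. The first term is bounded by $\|\theta_t-\theta^\omega_\pi\|_2$, using Cauchy--Schwarz and $\|\phi(z)\|_2\le 1$. Each of the three assertions then reduces to a bound on $\theta_t-\theta^\omega_\pi$ in the appropriate sense. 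We take $\theta_0=0$, so $\|\theta_0-\theta^\omega_\pi\|_2\le R$.

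For \ref{item-bias-TD-V}, linearity gives $\EE^\omega_\pi[Q_{\theta_t}]=Q_{\EE^\omega_\pi[\theta_t]}$. Applying the decomposition to $\EE^\omega_\pi[\theta_t]$ and invoking Lemma~\ref{lemma-bias-conv} yields $\|\EE^\omega_\pi[\theta_t]-\theta^\omega_\pi\|_2\le(1-\eta\tilde\mu)^{t/2}R$, which is the claim. We need to check that the stepsize $\eta=\nu/\sqrt T$ satisfies $\eta\le\tilde\mu/L^2$. This holds since $\nu\le\tilde\mu/(L^2+8)$ and $T\ge1$. The choice of $\tau$ in Proposition~\ref{prop-bounded-TD} also dominates the one required in Lemma~\ref{lemma-bias-conv}. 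This bound is deterministic, so it holds trivially on the high-probability event.

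For \ref{item-bounded-TD-V}, we work on the event of Proposition~\ref{prop-bounded-TD}, which has probability at least $1-\delta$ and on which $\|\theta_t-\theta^\omega_\pi\|_2\le\sqrt{R^2+1}\le R+1$ for all $t\le T$. We bound $|Q_{\theta_t}(z)|$ by $|\phi(z)^\top(\theta_t-\theta^\omega_\pi)|+|Q_{\theta^\omega_\pi}(z)|$. The bound $\|Q_{\theta^\omega_\pi}\|_\infty\le\|\theta^\omega_\pi\|_2\le R$ would give $2R+1$, which is weaker than stated. To reach $R+1+\varepsilon_{\rm approx}$, we instead write $Q_{\theta_t}=(Q_{\theta_t}-Q_{\theta^\omega_\pi})+(Q_{\theta^\omega_\pi}-Q^\omega_\pi)+Q^\omega_\pi$. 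The pieces are bounded by $\sqrt{R^2+1}$, by $\varepsilon_{\rm approx}$, and by $\|Q^\omega_\pi\|_\infty\le 1/(1-\gamma)$ respectively. This mismatch with the stated constant is the step we expect to be the main obstacle. We would reconcile it either by absorbing the $\|Q^\omega_\pi\|_\infty$ term through the convention that $R$ dominates it, or by accepting a constant of the form $R+1+\varepsilon_{\rm approx}$ up to such a term. The essential content is the high-probability boundedness from Proposition~\ref{prop-bounded-TD}.

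For \ref{item-mse-TD-V}, we take the supremum over $z$ and then the expectation. This gives $\EE^\omega_\pi[\|Q_{\theta_t}-Q^\omega_\pi\|_\infty^2]$ bounded via the expectation of $(\|\theta_t-\theta^\omega_\pi\|_2+\varepsilon_{\rm approx})^2$. Lemma~\ref{lemma:TD-mse} supplies $\EE^\omega_\pi\|\theta_t-\theta^\omega_\pi\|_2^2\le R^2+U^2\tilde\mu^2/64$, and it applies because $\eta\le\nu\le\tilde\mu/(L^2+8)\le\tilde\mu/8$. The cross term $2\varepsilon_{\rm approx}\EE\|\theta_t-\theta^\omega_\pi\|_2$ must then be handled to match the stated additive form. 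We would do this via Young's inequality, at the cost of a factor of $2$ on each term if needed. The alternative is an orthogonality-type argument showing the projection error and the parameter error do not interact, which would give the stated additive bound exactly.
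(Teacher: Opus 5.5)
Your decomposition through $Q_{\theta^\omega_\pi}$ is the paper's own, and your part~\ref{item-bias-TD-V} is exactly its argument. Your explicit check $\eta\le\tilde\mu/L^2$ is a useful addition. Your claim that the $\tau$ of Proposition~\ref{prop-bounded-TD} covers Lemma~\ref{lemma-bias-conv} is right in intent, since $\tilde\mu>0$ is needed for $\nu>0$; read literally, though, $\max\{\cdot\}/\log\rho$ with $\log\rho<0$ is the smaller of the two thresholds.

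The two obstacles you flag cannot be reconciled, and the paper gets past them only through invalid steps. In part~\ref{item-bounded-TD-V} the paper writes $\|Q_{\theta_t}\|_\infty\le\|Q_{\theta_t}-Q_{\theta^\omega_\pi}\|_\infty+\|Q_{\theta^\omega_\pi}-Q^\omega_\pi\|_\infty$. This omits $\|Q^\omega_\pi\|_\infty$; what that chain actually bounds is $\|Q_{\theta_t}-Q^\omega_\pi\|_\infty$. Neither of your fixes yields $R+1+\varepsilon_{\mathrm{approx}}$: even granting $\|Q^\omega_\pi\|_\infty\le R$, your three-term bound is about $2R+1+\varepsilon_{\mathrm{approx}}$. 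No argument that uses only the conclusion of Proposition~\ref{prop-bounded-TD} can do better. Suppose $\|\theta^\omega_\pi\|_2=R$ and some $\phi(z)=\theta^\omega_\pi/R$. The ball $\{\theta:\|\theta-\theta^\omega_\pi\|_2^2\le R^2+1\}$ contains $\theta^\omega_\pi(1+\sqrt{R^2+1}/R)$, for which $Q_\theta(z)=R+\sqrt{R^2+1}$. This exceeds $R+1+\varepsilon_{\mathrm{approx}}$, for example whenever $\varepsilon_{\mathrm{approx}}=0$. What is provable on the event is $\|Q_{\theta_t}\|_\infty\le\|\theta_t\|_2\le R+\sqrt{R^2+1}\le U$, or alternatively $\sqrt{R^2+1}+\varepsilon_{\mathrm{approx}}+(1-\gamma)^{-1}$.

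In part~\ref{item-mse-TD-V} the paper's chain is precisely your Young route, $\mathbb{E}\|Q_{\theta_t}-Q^\omega_\pi\|_\infty^2\le 2(\mathbb{E}\|\theta_t-\theta^\omega_\pi\|_2^2+\varepsilon_{\mathrm{approx}}^2)$, after which the factor $2$ is silently dropped. Your orthogonality alternative cannot work, for two reasons. The error is measured in $\|\cdot\|_\infty$, which has no Pythagorean identity. Also, $Q_{\theta^\omega_\pi}$ is the projected-Bellman fixed point, not the $\mathcal{L}^2(\nu^\omega_\pi)$-projection of $Q^\omega_\pi$.

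In fact the stated bound is false. Take $|\mathcal{A}|=|\mathcal{K}|=1$ and states $s_1,s_2$ with $r(s_1)=1$, $r(s_2)=0$. Let every state move to $s_1$ or $s_2$ with probability $1/2$, so Assumption~\ref{assu:geometric-ergodicity} holds trivially and $\Sigma=1/2$. Set $\gamma=0.9$, $d=1$, $\phi(s_1)=1$, $\phi(s_2)=0$. Then $Q^\omega_\pi=(5.5,\,4.5)$, $\theta^\omega_\pi=R=20/11$, $\varepsilon_{\mathrm{approx}}=4.5$, and $\tilde\mu\le\mu=1/20$. The right-hand side of~\ref{item-mse-TD-V} is therefore below $3.31+0.001+20.25<23.6$. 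At $t=0$, where $\theta_0=0$, the left-hand side is $5.5^2=30.25$. It still exceeds $23.6$ at $t=1$, since $\theta_1\in\{0,\eta\}$ with $\eta\le\nu<0.01$.

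You should therefore state and prove corrected bounds rather than look for a reconciliation. These are $\|Q_{\theta_t}\|_\infty\le U$ and, taking Lemma~\ref{lemma:TD-mse} as given, $\mathbb{E}\|Q_{\theta_t}-Q^\omega_\pi\|_\infty^2\le 2R^2+U^2\tilde\mu^2/32+2\varepsilon_{\mathrm{approx}}^2$. Condition~\ref{assu-approx-H-V} only needs some finite $B$ and $J$, so this changes constants downstream (for example $B$ in Theorem~\ref{thm:altogether-spda}) but not the rates.
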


\begin{proof}
For Assertion~\ref{item-bias-TD-V}, we have
\begin{equation*}
\begin{split}
    \|\mb E^\omega_\pi[Q_{\theta_t}]-Q^\omega_\pi\|_{\infty} &\le \|\mb E^\omega_\pi[Q_{\theta_t}]-Q_{\theta^\omega_\pi}\|_{\infty} + \|Q_{\theta^\omega_\pi}-Q^\omega_\pi\|_{\infty} 
    \\&\le \sup_{z\in\cZ}\|\phi(z)^\top \mb E^\omega_\pi[\theta_t] - \phi(z)^\top \theta^\omega_\pi\|_2 + \varepsilon_{\rm{approx}} 
    \\&\leq \sup_{z\in\cZ}\|\phi(z)\|_2 \|\mb E^\omega_\pi[\theta_t] - \theta^\omega_\pi\|_2 + \varepsilon_{\rm{approx}} \leq (1-\eta \tilde\mu )^{t / 2}R+\varepsilon_{\rm{approx}},
\end{split}
\end{equation*}
where the second inequality holds by the definition of $\varepsilon_{\rm{approx}}$, and the last inequality follows from the assumption that $\|\phi(z)\|\le 1$ for all $z\in\cZ$ and Lemma~\ref{lemma-bias-conv}.
For Assertion~\ref{item-bounded-TD-V}, we have
\begin{equation*}
\begin{split}
   \|Q_{\theta_t} \|_\infty 
   &\le \|Q_{\theta_t}-Q_{\theta^\omega_\pi} \|_\infty + \|Q_{\theta^\omega_\pi}-Q^\omega_\pi \|_\infty 
    \\&\le \sup_{z\in\cZ}\|\phi(z)^\top \theta_t - \phi(z)^\top \theta^\omega_\pi\|_2 + \varepsilon_{\rm{approx}} 
   \\&\le \sup_{z\in\cZ}\|\phi(z)\|\|\theta_t-\theta^\omega_\pi\|_2 + \varepsilon_{\rm{approx}} 
   \le \sqrt{R^2+1} + \varepsilon_{\rm{approx}} \le R+1+\varepsilon_{\rm{approx}},
\end{split}
\end{equation*}
where the fourth inequality follows from Lemma~\ref{prop-bounded-TD}. 
For Assertion~\ref{item-mse-TD-V}, observe that 
\begin{equation*}
\begin{split}
    \mb E^\omega_\pi[\|Q_{\theta_t}-Q^\omega_\pi\|_\infty^2] \le 2 (\mb E^\omega_\pi [\|\theta^\omega_\pi-\theta_t\|_2^2 ]+\varepsilon_{\rm{approx}}^2) \le R^2 + \frac{U^2\tilde \mu^2}{64} + \varepsilon_{\rm{approx}}^2,
\end{split}
\end{equation*}
where the second inequality uses Lemma~\ref{lemma:TD-mse}.
Hence, the claim follows.
\end{proof}

%% file: robust_eval_meta.tex

\subsection{Sample Complexity for Robust Policy Evaluation}\label{ssec:putting-it-together}
\revise{
With Theorem \ref{thm-V-spda} and Theorem \ref{thm-TD-V} in place, we  are now ready to determine the total number of samples required by Algorithm~\ref{alg:SPDA} to estimate the robust value function $V_\pi$ up to a pre-specified target precision $\varepsilon$ with high probability. 
}

\begin{theorem}
\label{thm:altogether-spda}
Suppose that Assumption~\ref{assu:geometric-ergodicity} holds, fix the total number of iterations $M>0$ in Algorithm~\ref{alg:SPDA} and $\delta\in(0,1)$. Denote $B=R+1+\varepsilon_{\rm{approx}}$. For any
$\varepsilon\ge 8 \varepsilon_{\rm{approx}}/(1-\gamma)$, 
set the parameters of Algorithm~\ref{alg:SPDA} as
$$
\alpha_m=\sqrt{m}, \quad \lambda_m=\frac{(m+1) B }{2 \sqrt{\log(|\cK|)}}, \quad \forall m \leq M,
$$ 
where Algorithm~\ref{alg:TD} is instantiated with parameters
\[\tau> \max\{-\log(\sqrt{T}), \log(\mu)-\log(C(1+\gamma))\}/\log(\rho),\ \eta=\nu/\sqrt{T},\ \theta_0=0, \ T=\cO\left(\frac{\log^2(B/\varepsilon)}{\nu^2\tilde\mu^2}\right),\]
with 
\begin{align*}
\nu = \min\! \left\{\frac{\tilde\mu }{L^2+8}, \frac{1}{2 U}, \frac{1}{4 G}\right\}\!,\,
 G=\max \left\{2 (1+\gamma) C (R^2+1), 4 \sqrt{\log \left(\!\frac{8MT}{\delta}\right)}\left[(L+2)(R^2+1)+U (R+1)\right]\right\}\!.
\end{align*}
The total number of samples required by Algorithm~\ref{alg:SPDA} to output
\begin{align*}
   -\varepsilon\le \hat V_\pi(s)-V_\pi(s) \le \varepsilon,\quad 
   -\varepsilon\le \hat Q_\pi(s,a,k)-Q_\pi(s,a,k) \le \varepsilon \quad \forall (s,a,k)\in\cS\times\cA\times\cK
\end{align*}
with probability at least $1-\delta$ is then bounded by
\begin{align*}
\widetilde{\mathcal O}\left(\left(\frac{\bar B^{2}\log(|\cK|)}{(1-\gamma)^{2}}\log\rbr{\frac{\bar B^2}{\delta(1-\gamma)^2\varepsilon^2}}+J\delta\right)\frac{1}{\tilde\mu^2\nu^2\varepsilon^2}\right).
\end{align*}
\end{theorem}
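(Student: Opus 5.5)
The plan is to combine Theorem~\ref{thm-V-spda} with Theorem~\ref{thm-TD-V}. Total sample count equals (outer iterations $M$) $\times$ (TD iterations $T$) $\times$ (samples per TD iteration, namely $\tau+1$ transitions, which is logarithmic in $T$ and absorbed into $\widetilde{\mathcal O}$). The structure is as follows. First, verify that running Algorithm~\ref{alg:TD} with the stated parameters at each outer iteration $m$ certifies Condition~\ref{assu-approx-H-V}, with $B=R+1+\varepsilon_{\rm approx}$, $J=R^2+U^2\tilde\mu^2/64+\varepsilon_{\rm approx}^2$, and $\underline{\varepsilon}\le (1-\gamma)\varepsilon/4$, each with probability $1-\delta/(4M)$. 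Second, invoke Theorem~\ref{thm-V-spda} to get $M$. Third, multiply.

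\emph{Certifying Condition~\ref{assu-approx-H-V}.} Conditionally on $\cF_{m-1}$, the policy $\omega^{(m)}$ is fixed, and the TD run at iteration $m$ uses fresh samples $\cD_m$. We therefore apply Proposition~\ref{prop-bounded-TD} and Theorem~\ref{thm-TD-V} with $\omega=\omega^{(m)}$ and confidence parameter $\delta'=\delta/(4M)$. This substitution is exactly why $G$ contains $\log(8MT/\delta)=\log(2T/\delta')$. Theorem~\ref{thm-TD-V}\ref{item-bounded-TD-V} gives $\|\hat Q_m\|_\infty\le B$, and Theorem~\ref{thm-TD-V}\ref{item-mse-TD-V} gives the second-moment bound $J$. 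Note that $R,U,L,\mu,C,\rho$ are uniform over $(\pi,\omega)$ by the definitions of $R$ and $\varepsilon_{\rm approx}$ as suprema; I would assume Assumption~\ref{assu:geometric-ergodicity} holds uniformly as well. For the bias, Theorem~\ref{thm-TD-V}\ref{item-bias-TD-V} gives
\[
\|\EE[\hat Q_m\mid\cF_{m-1}]-Q^{\omega^{(m)}}_\pi\|_\infty\le (1-\eta\tilde\mu)^{T/2}R+\varepsilon_{\rm approx}.
\]
With $\eta=\nu/\sqrt T$, we have $(1-\eta\tilde\mu)^{T/2}\le \exp(-\nu\tilde\mu\sqrt T/2)$. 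Choosing $\sqrt T\gtrsim \log(R/((1-\gamma)\varepsilon))/(\nu\tilde\mu)$, that is, $T=\cO(\log^2(B/\varepsilon)/(\nu^2\tilde\mu^2))$ with the $(1-\gamma)$ factor absorbed into the logarithm, makes the first term at most $(1-\gamma)\varepsilon/8$. Together with $\varepsilon_{\rm approx}\le(1-\gamma)\varepsilon/8$, which follows from the hypothesis $\varepsilon\ge 8\varepsilon_{\rm approx}/(1-\gamma)$, this yields $\underline\varepsilon\le(1-\gamma)\varepsilon/4$, i.e.\ $\varepsilon\ge4\underline\varepsilon/(1-\gamma)$ as Theorem~\ref{thm-V-spda} requires.

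\emph{Main obstacle: a mismatch in the definitions of $\nu$ and $T$.} Here $\nu$ depends on $G$, which depends on $\log T$, while $T$ depends on $\nu$; this is a mild circularity. I would resolve it by noting that $\nu^{-2}$ grows only like $\log(MT/\delta)$, so $T$ is determined up to logarithmic factors and any fixed-point choice is hidden in $\widetilde{\mathcal O}$. I would also check that the requirement on $\tau$ in Proposition~\ref{prop-bounded-TD}, namely $\rho^\tau\le 1/\sqrt T$, makes the per-iteration sample cost $\tau=\cO(\log T/\log(1/\rho))$, again logarithmic.

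\emph{Combining.} We take a union bound over the $M$ calls, each failing with probability at most $\delta/(4M)$, and we conditionalize properly on $\cF_{m-1}$. Theorem~\ref{thm-V-spda} then holds with probability $1-\delta$ using
\[
M=\cO\!\left(\frac{\bar B^2\log|\cK|}{(1-\gamma)^2\varepsilon^2}\log\frac{\bar B^2}{\delta(1-\gamma)^2\varepsilon^2}+\frac{J\delta}{\varepsilon^2}\right).
\]
Multiplying $M$ by $T(\tau+1)=\widetilde{\mathcal O}(1/(\nu^2\tilde\mu^2))$ gives the stated bound.
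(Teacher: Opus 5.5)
Your proposal is correct and follows the same route as the paper. You certify Condition~\ref{assu-approx-H-V} through Theorem~\ref{thm-TD-V}, using confidence $\delta/(4M)$, which accounts for the $\log(8MT/\delta)$ in $G$, and choosing $T$ so that the geometric bias term falls below $(1-\gamma)\varepsilon/8$; you then invoke Theorem~\ref{thm-V-spda} and multiply $M$ by the per-call TD cost. Your extra bookkeeping (the $\tau+1$ transitions per TD step, the mild circular dependence between $\nu$ and $T$, and the need for Assumption~\ref{assu:geometric-ergodicity} to hold with uniform constants across $\omega^{(m)}$) fills in details that the paper's one-line proof leaves implicit, and all of it is absorbed into $\widetilde{\mathcal O}$.
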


\begin{proof}
Note that the parameter choice clearly satisfies the conditions of Theorem~\ref{thm-TD-V}, which in turn certifies the conditions of Theorem~\ref{thm-V-spda}. In addition, observe that the total number of samples required is~$MT$, where 
\begin{align*}
M=\cO\left(\frac{\bar B^{2}\log(|\cK|)}{(1-\gamma)^{2} \varepsilon^2}\log\rbr{\frac{\bar B^2}{\delta(1-\gamma)^2\varepsilon^2}}+\frac{J\delta }{\varepsilon^2}\right)
\end{align*}
is given by Theorem~\ref{thm-V-spda}. Thus, the claim follows.
\end{proof}

A few remarks are in order before we conclude this section. 
First, in view of Theorem \ref{thm:altogether-spda}, 
Algorithm \ref{alg:SPDA} requires $\tilde{\cO}(1/\varepsilon^2)$ samples to estimate the robust value function up to $\varepsilon$ accuracy.
In addition, the obtained accuracy certificate is stated in a high probability sense. 
This also appears to be the first robust policy evaluation method for continuous-state robust MDPs with optimal sample complexity guarantees. 
Notably, in contrast to \cite{zhou2023natural,tamar2014scaling}, the proposed Algorithm \ref{alg:SPDA} does not require any restrictive assumption on the diameter of $\cP$ or the discount factor. 
Second, although we consider controller's action space to be finite within this section, results for robust policy evaluation developed here can be naturally  extended to  continuous action space with minor changes.
Finally, in Section \ref{sec:RPO}, we will utilize Algorithm \ref{alg:SPDA} as a subroutine, and  introduce an efficient policy optimization method for \eqref{def_rmdp_general} that obtains the optimal sample complexity up to a logarithmic factor.

%% file: robust_opt.tex

\section{Robust Policy Optimization} \label{sec:RPO}

\begin{algorithm}[b!]
\caption{Approximate policy iteration}
\begin{algorithmic}[1]
\REQUIRE Initial controller's policy $\pi_0\in\Pi$, number of iterations $N$, 
error tolerance $\Delta,\varepsilon>0$, 
confidence level $ 1- \delta $
\FOR{$n=0,1,\dots,N$}
  \STATE \label{step:matrix-game}
  {Find an approximate solution $(\pi_{n+1}(\cdot|s), \omega_{n+1}(\cdot|s))$ of~\eqref{matrix-game-concept}}
  such that $ \hatQsvp{\pi_n}{s}{\omega_{n+1}}{\pi} - \hatQsvp{\pi_n}{s}{\omega}{\pi_{n+1}}\le \Delta$ for all $(\pi(\cdot|s),\omega(\cdot|s))\in \Delta_\cA \times \Delta_\cK$ 
  \STATE \label{step:approximate-rpe}
  Compute $\hat Q_{\pi_{n+1}} $  using Algorithm~\ref{alg:SPDA} with input $\pi_{n+1}$
  such that $|\hat Q_{\pi_{n+1}}(s,a,k)-Q_{\pi_{n+1}}(s,a,k)| \le \varepsilon$ for all $(s,a,k)\in\cS\times\cA\times\cK$ with probability $1-\delta/(2N)$ 
\ENDFOR
\RETURN $\pi_{N+1}$
\end{algorithmic}\label{alg:api}
\end{algorithm}

With the development of robust policy evaluation in Section \ref{sec:RPE}, we are now ready to introduce the proposed method for the robust policy optimization problem \eqref{def_rmdp_general}.
In view of the dynamic game formulation~\eqref{def_dynamic_game} in Section~\ref{sec_ambiguity_set}, 
the robust policy optimization problem~\eqref{def_rmdp_general} is equivalent to a dynamic zero-sum game between the controller and nature. 
In Algorithm \ref{alg:api}, we present an approximate policy iteration method for solving~\eqref{def_dynamic_game}.
At the $n$-th iteration, the proposed method takes the form of 
\begin{equation}\label{matrix-game-concept}
   (\pi_{n+1}(\cdot|s), \omega_{n+1}(\cdot|s))\; {\text{ solves }}  \max_{\pi(\cdot|s)\in\Delta_{\cA}} \min_{\omega(\cdot|s)\in \Delta_{\cK}} (\pi(\cdot|s))^\top \hat{Q}_{\pi_n}(s)\omega(\cdot|s) \quad\forall s\in\cS,
\end{equation}
where $\hat{Q}_{\pi_n}$ denotes an estimate of the robust joint action-value function defined in \eqref{eq_def_opt_joint_q_give_pi}, and with a slight overload of notation we use  $\hat{Q}_{\pi_n}(s)\in\R^{|\cA|\times |\cK|}$ to denote the matrix defined as 
$\hat{Q}_{\pi_n}(s)[a,k] = \hat{Q}_{\pi_n}(s,a,k)$ for $(s,a,k) \in \cS \times \cA \times \cK$.
It is worth noting here that the estimator $\hat{Q}_{\pi_n}$ is constructed by invoking Algorithm~\ref{alg:SPDA} \revise{with a precision target $\varepsilon > 0$} and parameters specified as in Theorem~\ref{thm:altogether-spda}.
In addition, we assume access to a computational oracle that can produce $ (\pi_{n+1}(\cdot|s), \omega_{n+1}(\cdot|s))$ whose duality gap is upper bounded by a prespecified precision $\Delta > 0$. 
Such an oracle can be instantiated by various off-the-shelf methods \cite{nemirovski2004prox, lan2023novel,nesterov2005smooth,chambolle2011first}.
\revise{We will proceed to establish some general convergence properties of Algorithm~\ref{alg:api},
which will be used later in Theorem \ref{total_sample_complexity_opt} to determine the total iteration and sample complexities for finding an $\epsilon$-optimal policy for the robust policy optimization problem \eqref{def_rmdp_general}.}
Note that similar to \eqref{line:ftrl} in Section \ref{sec:RPE}, update \eqref{matrix-game-concept} here defines access to the updated controller's policy $\pi_{n+1}$ via a bilinear matrix game, and hence there is no need for explicitly solving \eqref{matrix-game-concept} for every state $s \in \cS$. 
Instead, $\pi_{n+1}(\cdot|s)$ is generated only when queried at state $s \in \cS$ by Algorithm \ref{alg:SPDA} in the robust policy evaluation step.

We begin by summarizing several useful properties of the iterates generated by Algorithm~\ref{alg:api}.
To facilitate our discussion, let us denote $\omega^\star_n $ as the optimal policy of nature's cost-minimizing MDP given controller's policy~$\pi_n$. The existence of $\omega^\star_n$ follows from the dynamic equations of \eqref{def_dynamic_game}. Clearly, $\omega^\star_n$ can be chosen independent of the initial state in \eqref{def_dynamic_game}.
\begin{lemma}
\label{lemma:api-descent}
Fix the total number of iterations $N>0$ in Algorithm~\ref{alg:api} and let $\delta\in(0,1)$. At each iteration of Algorithm~\ref{alg:api}, the following hold for all $s\in\cS$ with probability $1-\delta/(2N)$.
\begin{enumerate}[label = (\roman*)]
    \item \label{item-game-approx-error-Q} $\big| \hatQsvp{\pi_n}{s}{\omega}{\pi} - (\pi(\cdot|s))^\top Q_{\pi_n}(s)\,\omega(\cdot|s) \big| \le \varepsilon$  for all $(\pi,\omega)\in\Pi\times\Omega$;
    \item \label{item-game-real-Q-opt-error} $\Qsvp{\pi_n}{s}{\omega_{n+1}}{\pi} - \Qsvp{\pi_n}{s}{\omega}{\pi_{n+1}} \le \Delta+ 2\varepsilon $ for all $(\pi,\omega)\in\Pi\times\Omega$;
    \item \label{item-game-real-policy-progress} $\Qsvp{\pi_n}{s}{\omega^\star_{n+1}}{\pi_{n+1}}-\Qsvp{\pi_n}{s} {\omega^\star_n}{\pi_n} + \Delta + 2\varepsilon \ge 0$.
\end{enumerate}
\end{lemma}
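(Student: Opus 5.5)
We condition throughout on the event, of probability at least $1-\delta/(2N)$, on which the evaluation step (Line~\ref{step:approximate-rpe} of Algorithm~\ref{alg:api}) at the previous iteration succeeded. On this event $|\hat Q_{\pi_n}(s,a,k)-Q_{\pi_n}(s,a,k)|\le\varepsilon$ for all $(s,a,k)$.

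\emph{Assertion~\ref{item-game-approx-error-Q}.} Fix $s$ and write the bilinear difference as $\sum_{a,k}\pi(a|s)\omega(k|s)\,(\hat Q_{\pi_n}-Q_{\pi_n})(s,a,k)$. Each summand has absolute value at most $\varepsilon$ times the weight $\pi(a|s)\omega(k|s)$. Since $\pi(\cdot|s)\otimes\omega(\cdot|s)$ is a probability vector on $\cA\times\cK$, the sum is at most $\varepsilon$ in absolute value, uniformly over $(\pi,\omega)$ and $s$.

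\emph{Assertion~\ref{item-game-real-Q-opt-error}.} We combine the oracle guarantee in Line~\ref{step:matrix-game} with Assertion~\ref{item-game-approx-error-Q}. Replacing $\hat Q_{\pi_n}$ by $Q_{\pi_n}$ in each of the two bilinear terms changes each by at most $\varepsilon$. Hence the true gap is at most the estimated gap plus $2\varepsilon$, that is, at most $\Delta+2\varepsilon$.

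\emph{Assertion~\ref{item-game-real-policy-progress}.} This is the step requiring an MDP argument rather than pure algebra. We apply Assertion~\ref{item-game-real-Q-opt-error} with the choices $\pi=\pi_n$ and $\omega=\omega^\star_{n+1}$, which gives
\[
\Qsvp{\pi_n}{s}{\omega_{n+1}}{\pi_n}\le \Qsvp{\pi_n}{s}{\omega^\star_{n+1}}{\pi_{n+1}}+\Delta+2\varepsilon.
\]
It then suffices to show $\Qsvp{\pi_n}{s}{\omega_{n+1}}{\pi_n}\ge \Qsvp{\pi_n}{s}{\omega^\star_n}{\pi_n}$, and we do so in two steps.

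First, $Q_{\pi_n}(s,a,k)=r(s,a)+\gamma\int V_{\pi_n}(s')P_k(\diff s'|s,a)$. This holds because $\omega^\star_n$ attains the minimum in~\eqref{eq_def_opt_joint_q_give_pi} simultaneously for all states; it is an optimal stationary policy of nature's MDP and independent of the initial state, so $Q_{\pi_n}=Q^{\omega^\star_n}_{\pi_n}$. Consequently $\pi_n(\cdot|s)^\top Q_{\pi_n}(s)\,\omega(\cdot|s)=\sum_k\omega(k|s)H^{\omega^\star_n}_{\pi_n}(s,k)$.

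Second, the robust Bellman equation for nature's optimal policy states that $\omega^\star_n(\cdot|s)$ minimizes $\sum_k \rw_k H^{\omega^\star_n}_{\pi_n}(s,k)$ over $\rw\in\Delta_\cK$. Equivalently, $V_{\pi_n}(s)$ equals this minimum; this follows from Lemma~\ref{lemma:pdl}, since otherwise a one-step deviation would strictly improve nature's value. Hence the left-hand side above with $\omega_{n+1}$ is at least its value at $\omega^\star_n$.

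Chaining the displayed inequality with this bound gives Assertion~\ref{item-game-real-policy-progress}. The main obstacle is the identification $Q_{\pi_n}=Q^{\omega^\star_n}_{\pi_n}$ together with the statewise optimality of $\omega^\star_n$. We would justify both from the dynamic programming equations of~\eqref{def_dynamic_game} and the existence result in Lemma~\ref{lemma-exist-opt-policy}. A further subtlety is that the matrix game in~\eqref{matrix-game-concept} ranges over $\Delta_\cK$ rather than $W$; if $W\neq\Delta_\cK$, we would restrict to $W$ accordingly.
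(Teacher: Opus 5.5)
Your proposal is correct and follows the paper's proof essentially step for step. Assertion (i) is the probability-weighted sum bound, (ii) combines (i) with the oracle's gap guarantee, and (iii) chains the optimality of $\omega^\star_n$ with (ii) evaluated at $(\pi,\omega)=(\pi_n,\omega^\star_{n+1})$. Your extra justification ($Q_{\pi_n}=Q^{\omega^\star_n}_{\pi_n}$ and statewise optimality of $\omega^\star_n(\cdot|s)$ via a single-state deviation and Lemma~\ref{lemma:pdl}) fills in what the paper compresses into ``by optimality of $\omega^\star_n$''. Your caveat is also well taken: that step tacitly requires $\omega_{n+1}(\cdot|s)\in W$, so the matrix game \eqref{matrix-game-concept} should range over $W$ unless $W=\Delta_{\cK}$.
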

\begin{proof}
For Assertion~\ref{item-game-approx-error-Q}, we have with probability $1-\delta/(2N)$ that
\begin{align*}
& \left| (\pi(\cdot|s))^\top \hat Q_{\pi_n}(s)\omega(\cdot|s) - (\pi(\cdot|s))^\top Q_{\pi_n}(s)\omega(\cdot|s) \right|
\\&= \left|\sum_{a\in\cA}\sum_{k\in\cK} \left(\hat Q_{\pi_n}(s,a,k) - Q_{\pi_n}(s,a,k)\right)\omega(k|s) \pi(a|s) \right|
\le \varepsilon  \quad \forall (\pi,\omega)\in\Pi\times\Omega,\ \forall s\in\cS,
\end{align*}
where the inequality readily follows from the condition in Line~\ref{step:approximate-rpe} of Algorithm~\ref{alg:api}, which is in turn satisfied by the output of Algorithm~\ref{alg:SPDA} (\emph{cf}. Theorem~\ref{thm:altogether-spda}).
For Assertion~\ref{item-game-real-Q-opt-error}, we have with probability $1-\delta/(2N)$~that
\begin{equation*}
\begin{split}
    & \Qsvp{\pi_n}{s}{\omega_{n+1}}{\pi} - \Qsvp{\pi_n}{s}{\omega}{\pi_{n+1}}
    \\&\le \hatQsvp{\pi_n}{s}{\omega_{n+1}}{\pi}  - \hatQsvp{\pi_n}{s}{\omega}{\pi_{n+1}} + 2\varepsilon \le \Delta+ 2 \varepsilon \quad \forall (\pi,\omega)\in\Pi\times\Omega,\ \forall s\in\cS,
\end{split}
\end{equation*}
where the first inequality follows from Assertion~\ref{item-game-approx-error-Q}, and the second inequality applies the condition in Line~\ref{step:matrix-game} of Algorithm~\ref{alg:api}.
For Assertion~\ref{item-game-real-policy-progress}, we have with probability $1-\delta/(2N)$ that
\begin{align*}
    \Qsvp{\pi_n}{s}{\omega^\star_n}{\pi_n} & \le \Qsvp{\pi_n}{s}{\omega_{n+1}}{\pi_n} 
    \\&\le \Qsvp{\pi_n}{s}{\omega^\star_{n+1}}{\pi_{n+1}} + \Delta + 2\varepsilon \quad \forall s\in\cS,
\end{align*}
where the first inequality holds by optimality of~$\omega^\star_n$, and the second inequality follows by taking $(\pi,\omega)=(\pi_{n},\omega^\star_{n+1})$ in Assertion~\ref{item-game-real-Q-opt-error}.
\end{proof}

\begin{remark}\label{remark_high_prob}
    It is interesting to note here that for the purpose of the convergence characterization for Algorithm~\ref{alg:api}, one has to control the estimation error $\Vert\hat Q_{\pi_{n}} -Q_{\pi_{n}}\Vert_\infty $ either in expectation, or with high probability as stated in Line~\ref{step:approximate-rpe}. 
    \revise{
    Despite the abundant development of non-robust policy evaluation \cite{kotsalis2022simple, sutton1988learning}, it is unclear to us how existing  development could be readily adapted for controlling $\Vert\hat Q_{\pi_{n}} -Q_{\pi_{n}}\Vert_\infty $  in expectation.}
    \revise{Instead, in this manuscript we proceed} with Theorem \ref{thm:altogether-spda} that certifies high probability control.
   It can also be readily seen that Lemma \ref{lemma:api-descent} and the ensuing Lemma \ref{lemma-per-iter-gap-api} no longer hold for approximate policy iteration if one can only control the bias term  $\EE [{ \hat Q_{\pi_{n}} - Q_{\pi_n}}]$. This for instance is the approach taken in \cite{alacaoglu2022natural}. 
\end{remark}

We proceed by characterizing the difference of value functions between any pair of joint policies. 
This could be understood as the performance difference lemma \cite{kakade2002approximately} in the dynamic game setting. 

\begin{lemma}
\label{lemma:primal-dual-pdl}
For any joint policies $(\pi,\omega),(\pi', \omega')\in \Pi\times\Omega$ and any $s\in\cS$, we have
\begin{align*}
    V^\omega_\pi(s)-V^{\omega'}_{\pi'}(s)=\frac{1}{1-\gamma} \int_{\cS} \!\left[ \Qwsvp{\omega'}{\pi'}{s'}{\omega}{\pi}- \Qwsvp{\omega'}{\pi'}{s'}{\omega'}{\pi'}\right] d^\omega_\pi(\diff s'|s).
\end{align*}
\end{lemma}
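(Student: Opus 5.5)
This is the standard performance difference lemma applied to the joint Markov chain on $\cS$ driven by the controller and nature. The plan is to treat the pair $(\pi,\omega)$ as a single randomized policy acting on the joint action space $\cA\times\cK$, with transition law $(s,a,k)\mapsto P_k(\cdot|s,a)$. Under this view $Q^{\omega'}_{\pi'}$ from Definition~\ref{def_nature_state_action_func} is exactly the joint action-value function of the policy $(\pi',\omega')$.

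The key identity comes from Definition~\ref{def_nature_state_action_func} and the dynamic programming equation for the fixed joint policy:
\[
V^{\omega'}_{\pi'}(s)=\sum_{a\in\cA}\sum_{k\in\cK}\pi'(a|s)\,\omega'(k|s)\,Q^{\omega'}_{\pi'}(s,a,k)=\Qwsvp{\omega'}{\pi'}{s}{\omega'}{\pi'}.
\]

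First, for the trajectory $\{(S_t,A_t,K_t)\}$ generated by $(\pi,\omega)$ from $S_0=s$, write a telescoping sum using boundedness ($|V|\le 1/(1-\gamma)$) and $\gamma^tV^{\omega'}_{\pi'}(S_t)\to0$:
\[
V^\omega_\pi(s)-V^{\omega'}_{\pi'}(s)=\EE^\omega_\pi\Big[\sum_{t\ge0}\gamma^t\big(r(S_t,A_t)+\gamma V^{\omega'}_{\pi'}(S_{t+1})-V^{\omega'}_{\pi'}(S_t)\big)\Big].
\]
Second, condition on $(S_t,A_t,K_t)$ and use $S_{t+1}\sim P_{K_t}(\cdot|S_t,A_t)$. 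This gives $\EE[r(S_t,A_t)+\gamma V^{\omega'}_{\pi'}(S_{t+1})\mid S_t,A_t,K_t]=Q^{\omega'}_{\pi'}(S_t,A_t,K_t)$. Next average over $A_t\sim\pi(\cdot|S_t)$ and $K_t\sim\omega(\cdot|S_t)$, which are independent given $S_t$. That turns the summand into the bilinear form $\Qwsvp{\omega'}{\pi'}{S_t}{\omega}{\pi}$ minus $V^{\omega'}_{\pi'}(S_t)=\Qwsvp{\omega'}{\pi'}{S_t}{\omega'}{\pi'}$. Third, exchange sum and expectation by Fubini, justified by a geometric bound with bounded integrand. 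Then recognize $\sum_t\gamma^t\PP^\omega_\pi(S_t\in\cdot\mid S_0=s)=d^\omega_\pi(\cdot|s)/(1-\gamma)$ from Definition~\ref{def_discounted_visitation}, which gives the claimed formula.

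The only delicate point is measure-theoretic. The conditional expectation step requires $Q^{\omega'}_{\pi'}$ and $V^{\omega'}_{\pi'}$ to be bounded and Borel measurable, so that the integrals against $P_k$ and $d^\omega_\pi$ are well defined. Boundedness follows from $r\in[0,1]$. Measurability follows from the Borel measurability of policies and kernels, via the Ionescu Tulcea construction already invoked in the paper. Alternatively, one can simply cite \cite{kakade2002approximately} applied to the joint-action MDP, exactly as was done for Lemma~\ref{lemma:pdl}.
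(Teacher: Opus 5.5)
Your proof is correct and is essentially the paper's argument. The paper writes the one-step identity $V^\omega_\pi(s)-V^{\omega'}_{\pi'}(s) = (\pi(\cdot|s))^\top Q^{\omega'}_{\pi'}(s)\,\omega(\cdot|s) - (\pi'(\cdot|s))^\top Q^{\omega'}_{\pi'}(s)\,\omega'(\cdot|s) + \gamma\,\EE[V^\omega_\pi(S_1)-V^{\omega'}_{\pi'}(S_1)]$, with $S_1$ drawn under $(\pi,\omega)$, and unrolls it; your telescoping sum along the $(\pi,\omega)$-trajectory is that same unrolling written out, and it additionally makes explicit the vanishing tail $\gamma^T V^{\omega'}_{\pi'}(S_T)\to 0$ and the Fubini exchange that the paper's ``iteratively expanding'' leaves implicit.
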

\begin{proof}
We have
\begin{align*}
    V^\omega_\pi(s)-V^{\omega'}_{\pi'}(s) &= \Qwsvp{\omega}{\pi}{s}{\omega}{\pi} - \Qwsvp{\omega'}{\pi'}{s}{\omega'}{\pi'}
    \\&= (\pi(\cdot|s))^\top \rbr{Q^\omega_\pi(s) - Q^{\omega'}_{\pi'}(s)} \omega(\cdot|s) 
        + \Qwsvp{\omega'}{\pi'}{s}{\omega}{\pi} - \Qwsvp{\omega'}{\pi'}{s}{\omega'}{\pi'}
    \\&=\gamma \sum_{a\in\cA}\sum_{k\in\cK}\int_{\cS}[V^\omega_\pi(s')-V^{\omega'}_{\pi'}(s')]\pi(a|s)\omega(k|s)P_k(\diff s'|s,a)
    \\&\quad ~ + \Qwsvp{\omega'}{\pi'}{s}{\omega}{\pi} - \Qwsvp{\omega'}{\pi'}{s}{\omega'}{\pi'},
\end{align*}
where the third equality follows from Definition~\ref{def_nature_state_action_func}.
The claim then follows by iteratively expanding $V^\omega_\pi(s')-V^{\omega'}_{\pi'}(s')$ and using Definition~\ref{def_discounted_visitation}.
\end{proof}
We are now ready to establish the following convergence characterization for each update of Algorithm~\ref{alg:api}.
\revise{
In particular, Lemma \ref{lemma-per-iter-gap-api} below establishes that the optimality gap of the policy generated by Algorithm~\ref{alg:api} contracts linearly up to an additive error proportional to the robust policy evaluation error $\varepsilon$, and the primal-dual gap of the approximate solution $(\pi_{n+1}, \omega_{n+1})$ for the matrix game~\eqref{matrix-game-concept}.
}
\begin{lemma}\label{lemma-per-iter-gap-api}
Fix total iteration number $N>0$ in Algorithm~\ref{alg:api} and set $\delta\in(0,1)$. 
Let $\rho\in\cP(\cS)$ with $\mathrm{supp}(\rho)=\cS$, and \revise{let  $D=\sup_{(\pi,\omega)\in\Pi\times\Omega}\!\|\!\int_{\cS}d^\omega_{\pi}(\cdot|s)\rho(\diff s)/\rho(\cdot)\|_\infty$ denote the distribution mismatch coefficient.}
Then, with probability at least $1-\delta/N$,
\begin{align*}
\int_{\cS}\left[V_{\pi^\star}(s)-V_{\pi_{n+1}}(s)\right]\rho(\diff s) &\!\le\! \left(1-\frac{1-\gamma}{D}\right)\int_{\cS}\left[V_{\pi^\star}(s)-V_{\pi_n}(s)\right]\rho(\diff s)
 + \frac{2D(\Delta+2\varepsilon)}{1-\gamma}.
\end{align*}
\end{lemma}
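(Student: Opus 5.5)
The plan is a game-theoretic version of the classical approximate policy iteration argument. We compare consecutive iterates through Lemma~\ref{lemma:primal-dual-pdl}, lower-bound the per-state improvement by a nonnegative ``advantage'' of $\pi^\star$ over $\pi_n$, and relate that advantage to the optimality gap through the distribution mismatch coefficient $D$. Throughout we work on the event of Lemma~\ref{lemma:api-descent} at iteration $n$, which holds with probability at least $1-\delta/(2N)\ge 1-\delta/N$. We use the identities $V_{\pi_n}=V^{\omega^\star_n}_{\pi_n}$ and $Q^{\omega^\star_n}_{\pi_n}=Q_{\pi_n}$, so that $V_{\pi_n}(s)=\Qsvp{\pi_n}{s}{\omega^\star_n}{\pi_n}$.

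\emph{Step 1 (improvement).} Apply Lemma~\ref{lemma:primal-dual-pdl} with $(\pi,\omega)=(\pi_{n+1},\omega^\star_{n+1})$ and $(\pi',\omega')=(\pi_n,\omega^\star_n)$. This gives $V_{\pi_{n+1}}(s)-V_{\pi_n}(s)=\frac{1}{1-\gamma}\int I_n(s')\,d^{\omega^\star_{n+1}}_{\pi_{n+1}}(\diff s'|s)$, where
\[
I_n(s')=\Qsvp{\pi_n}{s'}{\omega^\star_{n+1}}{\pi_{n+1}}-V_{\pi_n}(s').
\]
We need two lower bounds on $I_n$. Lemma~\ref{lemma:api-descent}\ref{item-game-real-policy-progress} gives $I_n\ge-(\Delta+2\varepsilon)$. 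Lemma~\ref{lemma:api-descent}\ref{item-game-real-Q-opt-error} with $(\pi,\omega)=(\pi^\star,\omega^\star_{n+1})$ gives $I_n(s')\ge g_n(s')-(\Delta+2\varepsilon)$, where
\[
g_n(s')=\min_{\rw\in\Delta_\cK}(\pi^\star(\cdot|s'))^\top Q_{\pi_n}(s')\rw-V_{\pi_n}(s').
\]
Combining the two, $I_n\ge g_n^+-(\Delta+2\varepsilon)$. Since every visitation measure satisfies $d^\omega_\pi(\cdot|s)\ge(1-\gamma)\delta_s$ and $g_n^+\ge 0$, integrating against $\rho$ yields
\[
\int[V_{\pi_{n+1}}-V_{\pi_n}]\diff\rho\ \ge\ \int g_n^+\diff\rho-\frac{\Delta+2\varepsilon}{1-\gamma}.
\]

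\emph{Step 2 (gap bound).} Let $\omega'$ be a measurable pointwise minimizer attaining $g_n$. Measurability follows from a measurable selection argument applied to the finite-dimensional matrix game, in the same way as existence is handled in Appendix~\ref{sec_appendix}. Since $V_{\pi^\star}\le V^{\omega'}_{\pi^\star}$, Lemma~\ref{lemma:primal-dual-pdl} with $(\pi,\omega)=(\pi^\star,\omega')$ and $(\pi',\omega')=(\pi_n,\omega^\star_n)$ gives $V_{\pi^\star}(s)-V_{\pi_n}(s)\le\frac{1}{1-\gamma}\int g_n\,d^{\omega'}_{\pi^\star}(\diff s'|s)$. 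Replacing $g_n$ by $g_n^+$, integrating against $\rho$, and using the definition of $D$, we obtain $\int[V_{\pi^\star}-V_{\pi_n}]\diff\rho\le\frac{D}{1-\gamma}\int g_n^+\diff\rho$.

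\emph{Step 3 (combine).} Substitute Step 2 into Step 1 and subtract both sides from $\int[V_{\pi^\star}-V_{\pi_n}]\diff\rho$. This gives the contraction factor $1-(1-\gamma)/D$ with additive error $(\Delta+2\varepsilon)/(1-\gamma)$, which is at most the stated $2D(\Delta+2\varepsilon)/(1-\gamma)$ because $D\ge 1$.

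The main obstacle is Step 1. The improvement term must be simultaneously nonnegative up to the error (so that the crude bound $d\ge(1-\gamma)\delta_s$ can be used) and comparable to the advantage of $\pi^\star$. Taking the positive part $g_n^+$ resolves this. The other delicate point is ensuring that $\omega'$ is an admissible Borel policy.
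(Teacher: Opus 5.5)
Your proof is correct, but it organizes the argument differently from the paper. The paper first uses Lemma~\ref{lemma:api-descent}(iii) and $d^{\omega^\star_{n+1}}_{\pi_{n+1}}(\{s\}|s)\ge 1-\gamma$ to get the pointwise near-monotonicity $V_{\pi_n}-V_{\pi_{n+1}}-(\Delta+2\varepsilon)/(1-\gamma)\le 0$. It then integrates the improvement identity against the comparator's visitation measure $d^{\omega_{n+1}}_{\pi^\star}(\cdot|s)$. Next it adds the performance-difference expression for $V^{\omega_{n+1}}_{\pi^\star}-V_{\pi_n}$ under the same measure, so that the $V_{\pi_n}$ terms cancel, and bounds the leftover integrand by $\Delta+2\varepsilon$ via (ii). Only at the end does it apply $D$, to the nonpositive function above. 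You instead fuse (ii) and (iii) into the single pointwise bound $I_n\ge g_n^+-(\Delta+2\varepsilon)$ and apply $D$ to the nonnegative advantage $g_n^+$. This is the classical scheme in which the improvement dominates the advantage and the gap is at most $D/(1-\gamma)$ times the advantage. It skips the cancellation step and gives the additive term $(\Delta+2\varepsilon)/(1-\gamma)$. That is slightly sharper than the $(\Delta+2\varepsilon)/D+(\Delta+2\varepsilon)/(1-\gamma)$ the paper's chain produces before it is loosened to $2D(\Delta+2\varepsilon)/(1-\gamma)$. Your auxiliary best response $\omega'$ and its measurable-selection step are not actually needed: defining $g_n$ with $\omega_{n+1}(\cdot|s')$ in place of the minimum works verbatim, and this is what the paper does. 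Conversely, $\omega'$ can be taken as the vertex $e_{k(s')}$ with the smallest minimizing index, which is explicitly Borel, whereas the paper simply assumes the oracle output $\omega_{n+1}$ is an admissible policy. In both versions, membership of the comparator in $\Omega$ tacitly requires $W=\Delta_{\mathcal{K}}$, because the matrix game is posed over $\Delta_{\mathcal{K}}$. This is inherited from the paper rather than a flaw in your argument.
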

    
\begin{proof}
From Lemma~\ref{lemma:primal-dual-pdl}, we have with probability $1-\delta/(2N)$ that
\begin{equation}\label{ineq:nonneg}
\begin{aligned}
    &V_{\pi_{n+1}}(s)-V_{\pi_n}(s)+\frac{1}{1-\gamma}(\Delta+2\varepsilon) 
    \\&=\frac{1}{1-\gamma} \int_{\cS} \Big[\Qsvp{\pi_n}{s'}{\omega^\star_{n+1}}{\pi_{n+1}}-\Qsvp{\pi_n}{s'}{\omega^\star_n}{\pi_n}
    +\Delta + 2\varepsilon\Big] d^{\omega^\star_{n+1}}_{\pi_{n+1}}(\diff s'|s)
    \\&\ge \frac{d^{\omega^\star_{n+1}}_{\pi_{n+1}}(\{s\}|s)}{1-\gamma} \Big[\Qsvp{\pi_n}{s'}{\omega^\star_{n+1}}{\pi_{n+1}}-\Qsvp{\pi_n}{s'}{\omega^\star_n}{\pi_n}
    +\Delta + 2\varepsilon\Big]
    \\&\ge \Qsvp{\pi_n}{s'}{\omega^\star_{n+1}}{\pi_{n+1}}-\Qsvp{\pi_n}{s'}{\omega^\star_n}{\pi_n}
        +\Delta + 2\varepsilon
    \ge 0,
\end{aligned}
\end{equation}
where the first inequality applies Lemma~\ref{lemma:api-descent}, and the second inequality follows from the fact that $d^{\omega^\star_{n+1}}_{\pi_{n+1}}(\{s\}|s)$ $\ge1-\gamma$. 
Consequently, integrating both sides of the above inequality  with respect to $d^{\omega_{n+1}}_{\pi^\star}(\cdot|s)$ yields
\begin{equation}\label{eq:api-per-step}
    \begin{split}
        &\int_{\cS}\left[V_{\pi_n}(s')-V_{\pi_{n+1}}(s')\right]d^{\omega_{n+1}}_{\pi^\star}(\diff s'|s)
        \\&\le \int_{\cS}\left[\Qsvp{\pi_n}{s'}{\omega^\star_n}{\pi_n} - \Qsvp{\pi_n}{s'}{\omega^\star_{n+1}}{\pi_{n+1}}\right]d^{\omega_{n+1}}_{\pi^\star}(\diff s'|s)
        +\frac{1}{1-\gamma}(\Delta+2\varepsilon).
    \end{split}
\end{equation}
On the other hand, we have 
\begin{equation}\label{eq:api-subopt-gap}
\begin{split}
(1-\gamma)(V_{\pi^\star}(s) - V_{\pi_n}(s))
&\le (1-\gamma)(V^{\omega_{n+1}}_{\pi^\star}(s) - V_{\pi_n}(s))
\\&=\int_{\cS} \left[\Qsvp{\pi_n}{s'}{\omega_{n+1}}{\pi^\star} - \Qsvp{\pi_n}{s'}{\omega^\star_n}{\pi_n}\right] d^{\omega_{n+1}}_{\pi^\star}(\diff s'|s),
\end{split}
\end{equation}
where the inequality holds from the definition of $V_{\pi^\star}(s)$, and the equality follows from Lemma~\ref{lemma:primal-dual-pdl}.
Summing up~\eqref{eq:api-per-step} and~\eqref{eq:api-subopt-gap} and rearranging terms, we obtain with probability $1-\delta/N$ that
\begin{equation}\label{ineq:api-first-bound}
\begin{split}
&\int_{\cS}\left[V_{\pi_n}(s')-V_{\pi_{n+1}}(s')\right]d^{\omega_{n+1}}_{\pi^\star}(\diff s'|s) + (1-\gamma)(V_{\pi^\star}(s) - V_{\pi_n}(s))
\\&\le \int_{\cS}\big[\Qsvp{\pi_n}{s'}{\omega_{n+1}}{\pi^\star} - \Qsvp{\pi_n}{s'}{\omega^\star_{n+1}}{\pi_{n+1}}\big]d^{\omega_{n+1}}_{\pi^\star}(\diff s'|s)
+\frac{1}{1-\gamma}(\Delta+2\varepsilon)
\\&\le \Delta+2\varepsilon+\frac{1}{1-\gamma}(\Delta+2\varepsilon),
\end{split}
\end{equation}
where the second inequality follows from the observation that
\begin{align*}
&\Qsvp{\pi_n}{s}{\omega_{n+1}}{\pi^\star} - \Qsvp{\pi_n}{s}{\omega^\star_{n+1}}{\pi_{n+1}} \le \Delta+2\varepsilon \quad \forall s\in\cS,
\end{align*}
which in turn follows from Lemma~\ref{lemma:api-descent}\ref{item-game-real-Q-opt-error} applied to $(\pi,\omega)=(\pi^\star,\omega^\star_{n+1})$.
Finally, we obtain
\begin{align*}
& D \int_{\cS}\left[V_{\pi_n}(s)-V_{\pi_{n+1}}(s)\right] \rho(\diff s) +(1-\gamma)\int_{\cS}\left[V_{\pi^\star}(s) - V_{\pi_n}(s)\right] \rho(\diff s)
    \\&=D \int_{\cS}\left[V_{\pi_n}(s)-V_{\pi_{n+1}}(s)-\frac{1}{1-\gamma}(\Delta+2\varepsilon)\right] \rho(\diff s) + \frac{D}{1-\gamma}(\Delta+2\varepsilon)+(1-\gamma)\int_{\cS}\left[V_{\pi^\star}(s) - V_{\pi_n}(s)\right]\rho(\diff s)
    \\&\le \iint_{\cS\times\cS}\left[V_{\pi_n}(s')-V_{\pi_{n+1}}(s')-\frac{1}{1-\gamma}(\Delta+2\varepsilon)\right] d^{\omega_{n+1}}_{\pi^\star}(\diff s'|s)\rho(\diff s)
    \\&\quad+ \frac{D}{1-\gamma}(\Delta+2\varepsilon)
    +(1-\gamma)\int_{\cS}\left[V_{\pi^\star}(s) - V_{\pi_n}(s)\right]\rho(\diff s)
    \\&= \int_{\cS}\left[\int_{\cS}\left[V_{\pi_n}(s')-V_{\pi_{n+1}}(s')\right]d^{\omega_{n+1}}_{\pi^\star}(\diff s'|s) + (1-\gamma)(V_{\pi^\star}(s) - V_{\pi_n}(s))\right]\rho(\diff s)
    \\&\quad-\frac{1}{1-\gamma}(\Delta+2\varepsilon)+ \frac{D}{1-\gamma}(\Delta+2\varepsilon)
    \\&= \int_{\cS}\sbr{\Delta+2\varepsilon+\frac{1}{1-\gamma}(\Delta+2\varepsilon)} \rho(\diff s)-\frac{1}{1-\gamma}(\Delta+2\varepsilon)+ \frac{D}{1-\gamma}(\Delta+2\varepsilon)
    \\&\le \Delta+2\varepsilon+\frac{D}{1-\gamma}(\Delta+2\varepsilon),
\end{align*}
where the first inequality follows from
\eqref{ineq:nonneg} and $D\ge 1$ by construction, the second inequality holds because of~\eqref{ineq:api-first-bound}.
The claim thus follows after dividing both sides in the above expression by~$D$ and rearranging terms.
\end{proof}

\revise{With \revise{Theorem \ref{thm:altogether-spda}} and  Lemma~\ref{lemma-per-iter-gap-api} in place, we are now ready to  instantiate Algorithm \ref{alg:api} with concrete specifications of $\varepsilon$ and $\Delta$}, and correspondingly determine its \revise{iteration and} sample complexities  for the robust policy optimization problem \eqref{def_rmdp_general}. 
\begin{theorem}\label{total_sample_complexity_opt}
\label{thm-sample-complexity-api}
Let $\delta\in(0,1)$. Suppose $\rho\in\cP(\cS)$ is such that $\mathrm{supp}(\rho)=\cS$, and define $D=\sup_{(\pi,\omega)\in\Pi\times\Omega}$ $\!\|\!\int_{\cS}d^\omega_{\pi}(\cdot|s)\rho(\diff s)/\rho(\cdot)\|_\infty$. 
\revise{For any $\epsilon \ge 128 D^2 (1-\gamma)^{-3}\varepsilon_{\mathrm{approx}}$ and any $\pi_0\in\Pi$,
run Algorithm~\ref{alg:api} for
$N=\cO\left(D(1-\gamma)^{-1}\log\left(((1-\gamma)\epsilon)^{-1}\right)\right)$ iterations with $\Delta = (1-\gamma)^2\epsilon/(8 D^2)$ and $\varepsilon = (1-\gamma)^2\epsilon/(16D^2)$.}
Then, with probability at least $1-\delta$, the output~$\pi_N$ of Algorithm~\ref{alg:api} satisfies
\begin{align*}
\int_{\cS}V_{\pi^\star}(s)\rho(\diff s)-\int_{\cS}V_{\pi_N}(s)\rho(\diff s) \le \epsilon.
\end{align*}
In addition, the total number of samples required by Algorithm~\ref{alg:api} is bounded by
\begin{align*}
\widetilde{\mathcal O}\left(\left(\frac{\bar B^{2}\log(|\cK|)}{(1-\gamma)^{2}}\log\rbr{\frac{\bar B^2}{\delta(1-\gamma)^2\epsilon^2}}+J\delta\right)\frac{1}{\mu^2\nu^2\epsilon^2}\right).
\end{align*}
\end{theorem}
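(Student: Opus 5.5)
The argument unrolls the one-step contraction of Lemma~\ref{lemma-per-iter-gap-api} over $N$ iterations, converts the resulting geometric bound into an iteration count, and then multiplies by the per-call sample cost from Theorem~\ref{thm:altogether-spda}.

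\textbf{Step 1 (good event).} Let $g_n=\int_{\cS}[V_{\pi^\star}(s)-V_{\pi_n}(s)]\rho(\diff s)$. Each call to Algorithm~\ref{alg:SPDA} in Line~\ref{step:approximate-rpe} succeeds with probability $1-\delta/(2N)$. By a union bound over the $N$ iterations, with probability at least $1-\delta$ the per-iteration conclusion of Lemma~\ref{lemma-per-iter-gap-api} holds for every $n\le N$. (The lemma is stated with failure probability $\delta/N$; the budget can be re-split so the total failure is at most $\delta$, only logarithmic factors change.) On this event, with $q=1-(1-\gamma)/D\in[0,1)$ (recall $D\ge 1$),
\[
g_{n+1}\le q\,g_n+\frac{2D(\Delta+2\varepsilon)}{1-\gamma}.
\]

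\textbf{Step 2 (unroll).} Iterating the recursion gives
\[
g_N\le q^N g_0+\frac{2D(\Delta+2\varepsilon)}{1-\gamma}\cdot\frac{D}{1-\gamma}
= q^N g_0+\frac{2D^2(\Delta+2\varepsilon)}{(1-\gamma)^2}.
\]
Since $g_0\le 1/(1-\gamma)$, choosing $N\ge \frac{D}{1-\gamma}\log\frac{2}{(1-\gamma)\epsilon}$ makes $q^Ng_0\le\epsilon/2$. With $\Delta=(1-\gamma)^2\epsilon/(8D^2)$ and $\varepsilon=(1-\gamma)^2\epsilon/(16D^2)$ we get $\Delta+2\varepsilon=(1-\gamma)^2\epsilon/(4D^2)$, so the additive term equals exactly $\epsilon/2$. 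Hence $g_N\le\epsilon$, which gives the first claim.

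\textbf{Step 3 (admissibility of $\varepsilon$).} Theorem~\ref{thm:altogether-spda} can only certify accuracy $\varepsilon\ge 8\varepsilon_{\rm approx}/(1-\gamma)$. The hypothesis $\epsilon\ge 128D^2(1-\gamma)^{-3}\varepsilon_{\rm approx}$ gives exactly $\varepsilon=(1-\gamma)^2\epsilon/(16D^2)\ge 8\varepsilon_{\rm approx}/(1-\gamma)$, so each evaluation call is valid.

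\textbf{Step 4 (sample count).} Each evaluation call is run at confidence $1-\delta/(2N)$ and accuracy $\varepsilon\asymp(1-\gamma)^2\epsilon/D^2$. By Theorem~\ref{thm:altogether-spda} it uses $\widetilde{\cO}\big((\bar B^2\log|\cK|(1-\gamma)^{-2}\log(\cdot)+J\delta)/(\tilde\mu^2\nu^2\varepsilon^2)\big)$ samples. Multiplying by $N=\widetilde{\cO}(D/(1-\gamma))$ and using $\tilde\mu\ge\mu/2$ (after enlarging $\tau$ by a constant), the extra powers of $D$, $(1-\gamma)^{-1}$ and the $\log N$ factors are absorbed into $\widetilde{\cO}$. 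This yields the stated bound.

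\textbf{Main obstacle.} The delicate point is Step 4. The stated bound absorbs polynomial factors in $D$ and $(1-\gamma)^{-1}$, which come from $\varepsilon^{-2}\propto D^4(1-\gamma)^{-4}\epsilon^{-2}$ and from $N$, into the $\widetilde{\cO}$ notation. I would state explicitly that $\widetilde{\cO}$ treats $D$ and $1-\gamma$ as problem constants, consistent with how $\mu$ and $\nu$ appear. I would also check that replacing $\delta$ by $\delta/(2N)$ in the logarithms only costs logarithmic factors.
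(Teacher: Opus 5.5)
Your proposal is correct and follows essentially the same route as the paper: recursively apply Lemma~\ref{lemma-per-iter-gap-api}, use the choices of $N$, $\Delta$ and $\varepsilon$ so that each of the two terms is at most $\epsilon/2$, and then multiply by the per-call sample cost from Theorem~\ref{thm:altogether-spda}. Your explicit check that $\varepsilon\ge 8\varepsilon_{\mathrm{approx}}/(1-\gamma)$, and your remarks that the stated bound silently absorbs the polynomial factors in $D$ and $(1-\gamma)^{-1}$ (coming from $N$ and $\varepsilon^{-2}$) and writes $\mu$ in place of $\tilde\mu$, make precise what the paper's short proof leaves implicit.
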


\begin{proof}
We have
\begin{align*}
\int_{\cS}\left[V_{\pi^\star}(s)-V_{\pi_{N}}(s)\right]\rho(\diff s) &\le \left(1-\frac{1-\gamma}{D}\right)^{\!N}\int_{\cS}\left[V_{\pi^\star}(s) - V_{\pi_0}(s)\right]\rho(\diff s) + \frac{2D^2(\Delta+2\varepsilon)}{(1-\gamma)^2}\left(1-\left(1-\frac{1-\gamma}{D}\right)^{\!N}\right)
\\&\le \left(1-\frac{1-\gamma}{D}\right)^{\!N}\int_{\cS}\left[V_{\pi^\star}(s) - V_{\pi_0}(s)\right]\rho(\diff s) + \frac{2D^2(\Delta+2\varepsilon)}{(1-\gamma)^2} 
\le \epsilon,
\end{align*}
where the first inequality follows by a recursive application of Lemma~\ref{lemma-per-iter-gap-api}, 
and the third inequality uses the choice of $N$ and the choices of $\varepsilon$ and $\Delta$. The claim then follows by invoking Theorem~\ref{thm:altogether-spda} and the choice of~$\varepsilon$,  which states that each iteration of Algorithm~\ref{alg:api} requires  at most
\begin{align*}
\widetilde{\mathcal O}\left(\left(\frac{\bar B^{2}\log(|\cK|)}{(1-\gamma)^{2}}\log\rbr{\frac{\bar B^2}{\delta(1-\gamma)^2\varepsilon^2}}+J\delta\right)\frac{1}{\mu^2\nu^2\varepsilon^2}\right)
\end{align*}
samples.
\end{proof}

In view of Theorem~\ref{thm-sample-complexity-api}, the number of samples required by Algorithm~\ref{alg:api} to find an $\epsilon$-optimal policy for the robust policy optimization problem \eqref{def_rmdp_general} is bounded by $\tilde{\cO}(1/\epsilon^2)$.
Clearly, the obtained sample complexity bound is order-wise optimal up to a logarithmic factor \cite{sidford2018near}. 
Notably the accuracy certificate of Algorithm~\ref{alg:api} is also stated in a high probability sense.

\begin{remark}
Before we conclude our discussion, it might be worth noting here 
that the proposed methods in Sections~\ref{sec:RPE} and~\ref{sec:RPO} can be straightforwardly adapted for solving general infinite-horizon  zero-sum Markov games with continuous state and finite action spaces, 
and all the performance guarantees we obtained extend directly to this setting. 
Zero-sum Markov games have received considerable recent attention.
When the  model is known, dynamic programming methods and more recently policy gradient methods have been studied in \cite{zeng2022regularized,perolat2015approximate,cen2022faster,song2023can,van1978discounted,Patek1997}.
When the model is unknown, optimal $\cO(1/\epsilon^2)$ sample complexities have been established with model-based methods \cite{li2022minimax,zhang2023model,yan2024model} and model-free methods  \cite{alacaoglu2022natural,chen2023finite}.
On the other hand, it is unclear how the same sample complexity bound could be obtained for continuous-state MDPs, as existing optimal methods do not admit direct generalization to continuous state spaces. 
There also exists a fruitful line of research on developing model-free methods for solving zero-sum Markov games with continuous state spaces.
Yet it appears that the current development either obtains suboptimal sample complexity \cite{zhao2022provably} except for finite horizon problems \cite{jin2022power,huang2021towards}, or requires additional assumptions on the structure of the model \cite{xie2020learning,chen2022almost}.
In this sense, the proposed method in this manuscript seems to be the first algorithm for general infinite-horizon zero-sum  Markov games with order-wise optimal sample complexity of $\tilde{\cO}(1/\epsilon^2)$ for continuous state spaces. 
\end{remark}

%% file: outro.tex

\section{Concluding Remarks} \label{sec:outro}

In this manuscript, we study robust MDPs on continuous state spaces with a structured $\mathrm{s}$-rectangular ambiguity set. 
The proposed ambiguity set lies within the convex hull of unknown generating kernels, 
which are only accessible via samples.
We propose a stochastic first-order method for robust policy evaluation, and establish its high-probability convergence to the robust value function, from which we establish an $\tilde{\cO}(1/\epsilon^2)$ sample complexity.
With the high-probability accuracy certificate of the robust policy evaluation, we introduce an approximate policy iteration method that finds an $\epsilon$-optimal policy for the robust policy optimization problem with $\tilde{\cO}(1/\epsilon^2)$ sample complexity. 
Notably the proposed methods operate independently of the size of the state space, and thus can be implemented efficiently for continuous state spaces. 
The obtained sample complexities also appear to be new for solving robust MDPs with continuous state spaces.

We discuss a few directions that might be worth future investigation. 
First, it is interesting to consider ambiguity sets that are, in some sense, generated nonlinearly from the generating kernels, \emph{e.g.}, when the generation process involves a nonlinear parameterized function that is convex in its parameters. 
It would also be interesting to study whether similar methods could be developed for nonparametric ambiguity sets, for instance those generated by $\phi$-divergence or Wasserstein distance. 
Lastly, instead of the approximate policy iteration considered in this manuscript, it is also highly rewarding to directly design primal-dual-based methods for robust MDPs in the stochastic setting.

%% file: appendix.tex
\appendix
\section{Appendix}\label{sec_appendix}
\revm{We introduce the following non-robust Bellman evaluation operator, namely for all $v\in C_b(\cS)$ and $s\in\cS$,
\begin{align*}
    \cT^P_\pi v(s) = \sum_{a\in\cA} \pi(a|s) \sbr{r(s,a) + \gamma \int_{\cS} v(s') P(\diff s'|s,a)}.
\end{align*}
We also define the robust Bellman evaluation operator through
\begin{align*}
    \cT_\pi v(s) = \inf_{P\in\cP} \cT^P_\pi v(s),
\end{align*}}
and finally, the robust Bellman operator $\cT$ is defined through
\begin{equation*}
    \cT v(s)= \revm{\sup_{\pi\in\Pi}\cT_\pi v(s). }
\end{equation*}
\revm{Lemma~\ref{lemma-exist-opt-policy} shows that if $\cP$ is a mixture ambiguity set in the sense of Definition~\ref{def_mix_amb_set}, then the robust Bellman operator admits a fixed point and the robust MDP~\eqref{def_rmdp_general} is solvable. The corresponding results for tabular robust MDPs are established in~\cite{wiesemann2013robust, le2007robust}}. 
In addition, Lemma~\ref{lemma-exist-opt-policy}\ref{item-dp-1} and~\ref{item-dp-2} use standard proof techniques similar to those in~\citep[Theorem 2.7]{neufeld2023markov}, which studies $(\rm s,\rm a)$-rectangular {ambiguity} sets \cite{nilim2005robust, iyengar2005robust} on measurable state and action spaces.
\revm{In the ensuing discussion, we denote $C_b(\cS)$ the set of continuous and bounded functions over $\cS$, and note that $C_b(\cS)$ equipped with the supremum norm is a Banach space.}
\begin{lemma} \label{lemma-exist-opt-policy}
The following hold for the robust MDP problem \eqref{def_rmdp_general} with mixture ambiguity set (Definition~\ref{def_mix_amb_set}).
\begin{enumerate}[label = (\roman*)]
    \item \label{item-dp-1} For every $v\in C_b(\cS)$, there exists $P'\in\cP$ such that for all $(s,p)\in\cS\times\Delta_{\cA}$, we have $\cT^{P'}_\pi v(s)=\cT_\pi v(s)$.
    Moreover, there exists $\pi'\in\Pi$ such that for every $s\in\cS$, we have 
    $\cT_{\pi'} v(s) = \cT v(s).$
    Furthermore, 
    there exists a pair $(P',\pi')\in\cP\times\Pi$ that $\cT^{P'}_{\pi'} v(s)=\cT v(s).$
    \item \label{item-dp-2} For any $v\in C_b(\cS)$, 
    we have $\cT v\in C_b(\cS)$. In addition, for all $v,w\in C_b(\cS)$, we have $\|\cT v - \cT w\|_\infty \le \gamma \|v-w\|_\infty$. In particular, there exists a unique $v\opt\in C_b(\cS)$ such that $\cT v\opt= v\opt$. Moreover, for every $v_0\in C_b(\cS)$, we have $v\opt=\lim_{n\to\infty}\cT^n v_0.$
    \item \label{item-dp-3} 
    Let $(P\opt,\pi\opt)$ be such that $\cT^{P\opt}_{\pi\opt} v\opt(s)=\cT v\opt(s).$
    We then have $v\opt(s)=V^{P\opt}_{\pi\opt}(s)= \sup_{\pi\in\Pi}\inf_{P\in\cP} V^P_\pi(s)$ for all $s\in\cS$.
\end{enumerate}
\end{lemma}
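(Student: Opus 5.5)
The plan follows the standard dynamic-programming route for Borel models (as in \citep[Theorem 2.7]{neufeld2023markov}), with the mixture structure giving compactness and continuity for free. For \ref{item-dp-1}, fix $v\in C_b(\cS)$ and $s$. Since the statewise marginal $\cP_s$ is parametrized by $\rw\in W$, the controller-kernel pairing at state $s$ reduces to the finite-dimensional bilinear function
\[
g_v(s,p,\rw)=\sum_{a\in\cA}p_a\Bigl[r(s,a)+\gamma\sum_{k\in\cK}\rw_k\int_{\cS}v(s')P_k(\diff s'|s,a)\Bigr].
\]
Because $P_k(\cdot|s,a)$ is weakly continuous in $s$ and $v$ is bounded and continuous, $s\mapsto\int v\,\diff P_k(\cdot|s,a)$ is continuous. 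Hence $g_v$ is jointly continuous on $\cS\times\Delta_\cA\times W$, with $W$ and $\Delta_\cA$ compact.

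By $\mathrm{s}$-rectangularity, $\cT_\pi v(s)=\min_{\rw\in W}g_v(s,\pi(\cdot|s),\rw)$ and $\cT v(s)=\max_{p\in\Delta_\cA}\min_{\rw\in W}g_v(s,p,\rw)$. The optimization over the whole product set $\cP$ decouples across states, and measurability of the selections is handled next. I would use a measurable selection theorem for continuous functions on compact-valued, constant correspondences, namely the Kuratowski--Ryll-Nardzewski theorem or Berge's maximum theorem combined with a measurable-selection result. This gives Borel maps $(s,p)\mapsto\rw^\star(s,p)$ attaining the inner minimum and $s\mapsto p^\star(s)$ attaining the outer maximum. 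Setting $P'(\cdot|s,\cdot)=\sum_k\rw^\star_k(s,\pi(\cdot|s))P_k(\cdot|s,\cdot)$ and $\pi'(\cdot|s)=p^\star(s)$ yields the three claims; for the joint pair, take $P'$ built from $\rw^\star(s,p^\star(s))$. I expect the main technical point to be checking that the resulting $P'$ lies in $\cP_0$, i.e.\ that it is Borel measurable in $s$. This follows from composing Borel maps with the measurable kernels $P_k$.

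For \ref{item-dp-2}, Berge's maximum theorem applied twice (min over compact $W$, then max over compact $\Delta_\cA$) shows that $s\mapsto\cT v(s)$ is continuous. Boundedness is immediate from $r\in[0,1]$. For the contraction, note that $|g_v-g_w|\le\gamma\|v-w\|_\infty$ pointwise, and max--min operations are $1$-Lipschitz in sup-norm. Banach's fixed point theorem on $C_b(\cS)$ then gives a unique $v^\star$ and convergence of $\cT^nv_0$.

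For \ref{item-dp-3}, let $(P^\star,\pi^\star)$ be the selections from \ref{item-dp-1} at $v^\star$. Since $\cT^{P^\star}_{\pi^\star}v^\star=v^\star$ and $\cT^{P^\star}_{\pi^\star}$ is a $\gamma$-contraction on bounded measurable functions with fixed point $V^{P^\star}_{\pi^\star}$, we get $v^\star=V^{P^\star}_{\pi^\star}$. For the saddle identity I would prove two inequalities.
\begin{enumerate}[label=(\alph*)]
\item For any $P\in\cP$, $\cT^{P}_{\pi^\star}v^\star\ge\cT_{\pi^\star}v^\star=v^\star$. Iterating this monotone operator gives $V^P_{\pi^\star}\ge v^\star$, so $\inf_PV^P_{\pi^\star}\ge v^\star$.
\item For any $\pi\in\Pi$, take $P_\pi$ from \ref{item-dp-1} applied to $\cT_\pi$ at $v^\star$. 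Then $\cT^{P_\pi}_\pi v^\star=\cT_\pi v^\star\le\cT v^\star=v^\star$, and iterating gives $V^{P_\pi}_\pi\le v^\star$, hence $\inf_PV^P_\pi\le v^\star$.
\end{enumerate}
Combining (a) and (b) gives $\sup_\pi\inf_PV^P_\pi=v^\star$, attained at $\pi^\star$. The subtle step here is that \ref{item-dp-1} must be applied for a general Borel $\pi$, not just the optimal one. This is exactly why \ref{item-dp-1} is stated for all $(s,p)$, so that the selection is uniform in the controller's mixed action.
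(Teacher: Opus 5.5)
Your proof is correct and shares the paper's skeleton: Berge's maximum theorem with a measurable selection for \ref{item-dp-1}, Banach's fixed point theorem for \ref{item-dp-2}, and a monotone-iteration sandwich for \ref{item-dp-3}. It departs from the paper in two useful ways.

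\emph{Parametrization by $\rw$.} The paper optimizes the measure-valued function $f(s,p,Q)$ over $Q\in\cP_s$ equipped with the weak topology. It therefore has to prove joint continuity under weak convergence of kernels, invoke Lemma~\ref{lemma:cP_s-continuous} so that Berge's theorem applies to the moving constraint set $s\mapsto\cP_s$, and then argue separately that the measure-valued selector $Q'$ yields a kernel in $\cP_0$. Parametrizing by $\rw\in W$ instead reduces everything to a continuous function optimized over the fixed compact sets $W$ and $\Delta_{\cA}$. That makes Lemma~\ref{lemma:cP_s-continuous} unnecessary, and Borel measurability of $P'$ becomes immediate, since $P'(\mathcal B|s,a)$ is a finite sum of products of Borel functions of $s$. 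The price is that you use the finite-mixture structure explicitly, whereas the paper's formulation would extend with little change to any $\mathrm{s}$-rectangular set whose marginals form a continuous compact-valued correspondence.

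\emph{The bound $\inf_P V^P_\pi\le v^\star$.} The paper writes $v^\star\ge\cT_\pi v^\star\ge\lim_n(\cT_\pi)^n v^\star=V_\pi$. The last equality tacitly identifies the fixed point of the robust evaluation operator $\cT_\pi$ with $\inf_P V^P_\pi$; contraction only gives existence and uniqueness of that fixed point, not this identification. Your step (b) instead applies the selection from \ref{item-dp-1} to the arbitrary Borel policy $\pi$ and iterates only the non-robust operator $\cT^{P_\pi}_\pi$. It therefore needs nothing beyond the standard fact that $V^P_\pi$ is the fixed point of $\cT^P_\pi$, which makes that direction self-contained. It is also precisely where the uniform-in-$p$ formulation of \ref{item-dp-1} gets used.
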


\begin{proof}
Let $v\in C_b(\cS)$, and define 
the map  $f: \cS\times\Delta_{\cA}\times\cP_s \to \RR$ through
\begin{align*}
f(s,p,Q) = \sum_{a\in\cA} p(a) \sbr{r(s,a) + \gamma\int_{\cS} v(s') Q(\diff s'|s,a)} \quad \forall Q\in\cP_s.
\end{align*}
We aim to show that $f$ is continuous. To this end, let \revm{$(s,p,Q)\in\cS\times\Delta_{\cA}\times\cP_s $ and $\{(s_n, p_n, Q_n)\}_{n\in\NN}$ be a sequence in $\cS\times\Delta_{\cA}\times\cP_s$ with $(s_n, p_n, Q_n) \to (s,p,Q)$ as $n\to\infty$, where $Q_n\to Q$ is understood as $Q_n(\cdot|s,a)$ converges weakly to $Q(\cdot|s,a)$ for every $(s,a)\in\cS\times\cA$.}
We then have
\begin{align*}
&|f(s_n,p_n,Q_n)-f(s,p,Q)| \\&\le \revm{|f(s_n,p,Q_n)-f(s,p,Q_n)|}+ |f(s_n,p_n,Q_n)-f(s_n,p,Q_n)|  + | f(s,p,Q_n)-f(s,p,Q)|,
\end{align*}
\revm{where the first term converges to $0$ because $r$ is continuous in $s$ and because $\int_\cS v(s') Q_n(\diff s'|s,a)$ is continuous in $s$, which in turn holds because $Q_n(\diff s'|s,a)$ is continuous in $s$. The second term converges to $0$ because~$r$ and $v$ are bounded and $p_n$ converges to $p$.
The third term converges to zero because $Q_n(\cdot|s,a)$ converges weakly to $Q(\cdot|s,a)$ and the integrand is continuous.}
Hence,~$f$ is continuous. 
Now, we define the $\argmin$ correspondence $\mu: \cS\times\Delta_{\cA} \rightrightarrows \cP_s$ through
\[\mu(s,p)=\{Q\in\cP_s:  f(s,p,Q)=g(s,p)\}\]
where 
\[g(s,p)= \min_{Q\in\cP_s} f(s,p,Q).\]
The set-valued mapping $s\mapsto \cP_s$ is continuous by Lemma~\ref{lemma:cP_s-continuous}, and hence Berge's maximum theorem~\citep[Theorem~17.31]{aliprantis2006infinite} implies that $g$ is continuous and that $\mu$ is non-empty-valued and upper hemicontinuous.
Measurable selection theorem~\citep[Theorem~18.19]{aliprantis2006infinite} then asserts that $\mu(s,p)$ has a Borel-measurable selector~$Q'$. \revm{By construction, we have $Q'\in\cP_s$ because $Q'$ is a Borel-measurable function from $\cS\times\Delta(\cA)$ to $\Delta(\cS)$ for any fixed $a\in\cA$.
\citep[Theorem~15.13]{aliprantis2006infinite} then implies that $s\mapsto Q'(\cB|s,p,a)$ is a Borel-measurable real-valued function for every Borel set $\cB\subseteq \mathbb R^S$.
Since compositions of Borel measurable functions remain Borel measurable, for any fixed $\pi\in\Pi$, then the kernel~$P'$ defined through $P'(\cdot|s,a)=Q'(\cdot|s,\pi(\cdot|s),a)$ is an element of $\cP$.
We then have 
\[\cT^{P'}_\pi v(s)= \inf_{Q\in\cP_s} f(s,p,Q)=\inf_{P\in\cP} \cT^P_\pi v(s),\]
where the first equality follows from the previous construction that $P'(\cdot|s,a)=Q'(\cdot|s,\pi(\cdot|s),a)$, and the second equality holds because $\cP$ is $\rm s$-rectangular.}
Similarly, we define the $\argmax$ correspondence $ \nu: \cS \rightrightarrows \Delta_{\cA}$ through
\[
\nu(s)=\{p \in \Delta_{\cA}: g(s,p)=h(s)\} ,
\]
where
\[h(s)=\max_{p\in\Delta_{\cA}} g(s,p).\]
Since $g$ is continuous and the constant correspondence $\cS\rightrightarrows\Delta_{\cA}$ is compact-valued therefore trivially continuous, by Berge's maximum theorem~\citep[Theorem~17.31]{aliprantis2006infinite}, $h$ is continuous and $\nu$ is non-empty-valued and upper hemicontinuous.
The measurable selection theorem~\citep[Theorem~18.19]{aliprantis2006infinite} thus implies that $\nu(s)$ has a Borel-measurable selector $p'$. 
\revm{A similar argument leading up to the claim that $P'\in\cP$ implies that the policy~$\pi'$ defined through $\pi'(\cdot|s)=p'(\cdot|s)$ is an element of $\Pi$.}
Then, we have
\begin{align*}
    \cT v(s)=\sup_{\pi\in\Pi}\cT_\pi v(s)
    =\max_{p\in\Delta_{\cA}} g(s,p)
    =g(s,\pi'(\cdot|s))
    &= \inf_{P\in\cP} \cT^P_{\pi'} v(s) = \cT^{P'}_{\pi'} v(s),
\end{align*}
\revm{where the first equality holds by definition of $\cT$, the second equality follows because of the separability of~$\Pi$ across $s$, the third equality follows from the previous construction where $\pi'(\cdot|s)=p'(\cdot|s)$, and the fourth equality uses the definition of $g$. The last equality then follows by construction of $P'$.}
Assertion~\ref{item-dp-1} thus follows.
For Assertion~\ref{item-dp-2}, note first that $\cT v$ inherits continuity from the continuity of $h$ shown in the proof of Assertion~\ref{item-dp-1}. Since $v\in C_b(\cS)$, we have
\begin{align*}
    \|\cT v\|_\infty\le \max_{a\in\cA} \|r(\cdot,a)\|_\infty + \gamma \|v\|_\infty,
\end{align*}
which readily implies that $\cT v \in C_b(\cS)$. 
Moreover, we have
\begin{align*}
    |\cT v- \cT w| \le \gamma \sup_{p\in\Delta_{\cA}}\sup_{P\in\cP}\left|\int_{\cS} (v(s')-w(s')) \sum_{a\in\cA} P(\diff s'|s,a) p(a)\right| \le \gamma \|v-w\|_\infty,
\end{align*}
where the first inequality follows by the definition of $\cT$ and the observation that for any set $\cQ$ and functions $G,H:\cQ\to\RR$, we have
\begin{equation*}
\Big|\inf _{q\in \cQ} G(q)-\inf _{q \in \cQ} H(q)\Big| \leq \sup _{q \in \cQ}|G(q)-H(q)| .
\end{equation*}
We have thus shown that $\cT$ is a contraction operator on $C_b(\cS)$. Banach's fixed point theorem then \revm{applies because $\cC_b(\cS)$ equipped with supremum norm is a Banach space. Hence,}
there exists a unique fixed point $v\opt\in C_b(\cS)$ of $\cT$ such that $v\opt=\cT v\opt=\lim_{n\to\infty} \cT^n v_0$ for all $v_0\in C_b(\cS)$. Hence, Assertion~\ref{item-dp-2} follows.
For Assertion~\ref{item-dp-3}, 
note that
\begin{align*}
    v\opt(s)=\cT v\opt(s) \ge \cT_\pi v\opt(s) \ge \lim_{n\to\infty} (\cT_\pi)^n v\opt(s) = V_\pi(s) \quad\forall \pi\in\Pi, \ s\in\cS,
\end{align*}
where the first inequality holds by definitions of $\cT$ and $\cT_\pi$, the second inequality follows because $\cT_\pi$ is monotone, and the second equality uses the contraction property of $\cT_\pi$. The above expression readily implies
\begin{align*}
    v\opt(s) \ge \sup_{\pi\in\Pi} V_\pi(s)= \sup_{\pi\in\Pi} \inf_{P\in\cP} V^P_\pi(s)\quad \forall s\in\cS.
\end{align*}
On the other hand, we have
\begin{align*}
    v\opt(s)=\cT^{P\opt}_{\pi\opt} v\opt(s) \le \cT^{P}_{\pi\opt} v\opt(s) \le \lim_{n\to\infty} (\cT^{P}_{\pi\opt})^n v\opt(s)=V^P_{\pi\opt}(s)\quad \forall P\in\cP,\ s\in\cS,
\end{align*}
where the inequality uses the optimality of~$P\opt$, the second inequality holds because $\cT^{P}_{\pi\opt} $ is monotone, and the second inequality follows from the contraction property of $\cT^P_{\pi\opt}$.
We have thus shown that $v\opt(s)\le V^P_{\pi\opt}(s)$ for all $P\in\cP$ and $s\in\cS$, which in turn implies that \[v\opt(s)=\inf_{P\in\cP}V^P_{\pi\opt}(s) \le \sup_{\pi\in\Pi}\inf_{P\in\cP}V^P_{\pi}(s)\quad\forall s\in\cS.\]
The claim follows by combining the previous insights \[\sup_{\pi\in\Pi} \inf_{P\in\cP} V^P_\pi(s)\le v\opt(s)\le \sup_{\pi\in\Pi} \inf_{P\in\cP} V^P_\pi(s) \quad \forall s\in\cS\]
and observing that 
$v\opt(s)=\cT^{P\opt}_{\pi\opt} v\opt(s) = \lim_{n\to\infty}(\cT^{P\opt}_{\pi\opt})^n v\opt(s) = V^{P\opt}_{\pi\opt}(s)$ for all  $s\in\cS$.

\end{proof}
\revm{
\begin{lemma} \label{lemma:cP_s-continuous}
Let $\cS \subseteq \mathbb{R}^d$ be a closed set and define the set-valued mapping $F : \cS \rightrightarrows (\Delta_{\cS})^A$ through $F(s)=\cP_s$ where $\cP_s$ is specified in Definition~\ref{def_mix_amb_set}. Then $F$ is a continuous correspondence (\emph{i.e.,} both upper and lower hemicontinuous) with respect to the product topology of the weak topologies on $\Delta_{\cS}$. 
\end{lemma}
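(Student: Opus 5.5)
We need to show that $F(s)=\cP_s$ is both upper and lower hemicontinuous, with $\cP_s$ viewed as a subset of $(\Delta_{\cS})^A$ under the product weak topology. The plan is to write $F$ as the image of a jointly continuous map on a fixed compact set. Define $\Phi:\cS\times W\to(\Delta_{\cS})^A$ by
\begin{align*}
\Phi(s,\rw)=\Bigl(\sum_{k\in\cK}\rw_k P_k(\cdot|s,a)\Bigr)_{a\in\cA},
\end{align*}
so that $F(s)=\Phi(s,W)$. Since $W\subseteq\Delta_{\cK}$ is closed and bounded, it is compact.

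\textbf{Step 1: joint continuity of $\Phi$.} Take $(s_n,\rw^n)\to(s,\rw)$ and a bounded continuous $g$ on $\cS$. Then
\begin{align*}
\Bigl|\sum_k \rw^n_k\!\int g\,\diff P_k(\cdot|s_n,a)-\sum_k \rw_k\!\int g\,\diff P_k(\cdot|s,a)\Bigr|
\le \|g\|_\infty\|\rw^n-\rw\|_1+\sum_k \rw_k\Bigl|\int g\,\diff P_k(\cdot|s_n,a)-\int g\,\diff P_k(\cdot|s,a)\Bigr|.
\end{align*}
The first term vanishes because $\rw^n\to\rw$. The second vanishes by the weak continuity of $P_k(\cdot|s,a)$ in $s$ assumed in Definition~\ref{def_mix_amb_set}. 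This holds for each of the finitely many $a$, which gives convergence in the product topology.

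\textbf{Step 2: lower hemicontinuity.} Let $s_n\to s$ and $Q=\Phi(s,\rw)\in F(s)$. Choose $Q_n=\Phi(s_n,\rw)\in F(s_n)$; by Step 1, $Q_n\to Q$. This is the sequential characterization of lower hemicontinuity. It is valid here because $\cS\subseteq\RR^d$ is metrizable, and I would cite the corresponding result in \citep{aliprantis2006infinite}.

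\textbf{Step 3: upper hemicontinuity.} Values are compact, since each $F(s)$ is a continuous image of the compact set $W$. I would therefore use the sequential criterion: whenever $s_n\to s$ and $Q_n\in F(s_n)$, the sequence $(Q_n)$ has a subsequence converging to a point of $F(s)$. Write $Q_n=\Phi(s_n,\rw^n)$. By compactness of $W$, pass to a subsequence with $\rw^n\to\rw\in W$; Step 1 then gives $Q_n\to\Phi(s,\rw)\in F(s)$.

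\textbf{Main obstacle.} The delicate point is that the sequential criterion for upper hemicontinuity requires the codomain to be metrizable, or at least a direct compactness argument. Since $\cS$ is a closed subset of $\RR^d$, it is Polish, so $\Delta_{\cS}$ with the weak topology is metrizable (Prokhorov metric), and so is the finite product $(\Delta_{\cS})^A$. With metrizability in hand, the standard closed-graph/compact-range characterization in \citep{aliprantis2006infinite} applies. The key fact is that $\Phi(\{s_n\}\cup\{s\}\times W)$ is compact, as the continuous image of a compact set, which justifies the subsequence argument.
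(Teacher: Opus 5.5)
Your proof is correct and has the same overall structure as the paper's. Both write $F(s)=\Phi(\{s\}\times W)$ for the jointly continuous map $\Phi(s,\rw)=\sum_{k}\rw_k P_k(\cdot|s,\cdot)$. Both get lower hemicontinuity by freezing $\rw$ and moving only $s$.

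The real difference is in upper hemicontinuity. The paper works directly from the open-set definition. Given an open $U\supseteq F(s)$, joint continuity gives, for each $\rw\in W$, product neighborhoods $V_{\rw}\times X_{\rw}$ of $(s,\rw)$ that $\Phi$ maps into $U$. A finite subcover of the compact set $W$ is then extracted, and the intersection of the finitely many $V_{\rw}$ is the required neighborhood of $s$. This tube-lemma argument uses only compactness of $W$ and continuity of $\Phi$, with no countability or metrizability assumptions. Your sequential route instead uses the sequential characterization of upper hemicontinuity with compact values. That needs $\cS$ to be first countable and, crucially, $(\Delta_{\cS})^A$ to be metrizable. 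You correctly supply this via the Prokhorov metric on the Polish space $\cS$, so the argument goes through, but it uses more structure than the result requires.

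In the other direction, your Step~1 checks joint continuity of $\Phi$ explicitly with bounded continuous test functions. The key estimate is $\|g\|_\infty\|\rw^n-\rw\|_1+\sum_k\rw_k|\int g\,\diff P_k(\cdot|s_n,a)-\int g\,\diff P_k(\cdot|s,a)|$. The paper only asserts joint continuity from continuity of ``addition and scalar multiplication of probability measures'', which is loosely phrased since the relevant operation is the convex-combination map. On that point your version is the more careful one.
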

\begin{proof}
Define the function $\Phi : \cS \times W \to (\Delta_{\cS})^A$ through $\Phi(s,\mathrm{w}) = \sum_{k=1}^K \mathrm{w}_k P_k(\cdot|s,\cdot).$
Since addition and scalar multiplication of probability measures are continuous under the weak topology, and each $P_k(\cdot|s,a)$ is continuous in $s$ for all $a\in\cA$ according to Definition~\ref{def_mix_amb_set}, the function $\Phi$ is jointly continuous under the product topology.
By construction, we have $F(s) = \Phi(\{s\} \times W)$.
We first show upper hemicontinuity of $F.$
Fix $s \in \cS$ and let $U \subseteq \Delta_{\cS}$ be open with $F(s) \subseteq U$.
For each ${\mathrm{w}} \in W$, continuity of $\Phi$ yields open neighborhoods $V_{\mathrm{w}}$ of~$s$ in $\cS$ and $X_{\mathrm{w}}$ of ${\mathrm{w}}$ in $\Delta_{\cK}$ such that
$\Phi(V_{\mathrm{w}} \times X_{\mathrm{w}}) \subseteq U.$
The family $\{X_{\mathrm{w}}\}_{{\mathrm{w}} \in W}$ is an open cover of the compact set $W$, so there exist ${\mathrm{w}}^{(1)},\dots,{\mathrm{w}}^{(m)}$ with
$W \subseteq \bigcup_{i=1}^m X_{{\mathrm{w}}^{(i)}}$.
Let $V = \bigcap_{i=1}^m V_{{\mathrm{w}}^{(i)}}$. Then $V$ is a neighborhood of~$s$ and for all $s' \in V$,
\[
F(s') = \Phi(s'\times W) \subseteq U,
\]
which proves upper hemicontinuity.
We now proceed to show lower hemicontinuity of $F$.
Fix $s \in \cS$ and let $U \subseteq \Delta_{\cS}$ be open with $F(s) \cap U \neq \emptyset$.
Choose $\mu \in F(s) \cap U$. Then $\mu = \Phi(s,{\mathrm{w}})$ for some ${\mathrm{w}} \in W$.
By continuity of $\Phi$, there exists a neighborhood $V$ of $s$ such that
$\Phi(s',{\mathrm{w}}) \in U$ for all $s' \in V.$
Hence, for all $s' \in V$,
\[
F(s') \cap U \neq \varnothing,
\]
which establishes lower hemicontinuity.
Therefore, $F$ is continuous as a correspondence.
\end{proof}
}

\begin{lemma}
\label{lemma-fixed-prop34}

Let $(\Omega,\cF,\PP)$ be a probability space and $\{\cF_m\}_{m\ge 0}$ be a filtration. Assume that the stochastic processes $\{X_m\}_{m\ge 0}$ and $\{\hat X_m\}_{m\ge0}$ with $X_m:\Omega\to[0,\bar{X}]$, $\hat X_m:\Omega\to\R$ are such that $X_m$ and~$\hat X_m$ are both $\cF_m$-measurable for every $m\ge 0$. 
Fix a positive integer $M>1$, and suppose that for any $\delta\in(0,1)$, there exist $\underline\varepsilon,B,J>0$ 
such that for all $m\in[M]$,
\[\|\EE[ \hat X_m | \mac F_{m-1}] - X_m \|_\infty \le \underline\varepsilon, \quad  \| \hat X_m \|_\infty \le B, \quad \EE\Big[ \big\|\hat X_m - X_m\big\|_\infty^2 \mid \cF_{m-1}\Big] \le J\]
hold with probability $1-\delta/({4}M)$.
Then, for any $\varepsilon\ge\underline\varepsilon$ and $\vartheta \in \Delta_{[M]}$, we have with probability $1-\delta{/2}$ that
\[\displaystyle\left|\sum_{m=1}^M \vartheta_m(\hat X_m-X_m)\right| \leq \varepsilon + (B+\bar{X}) \sqrt{2 \sum_{m=1}^M \vartheta_m^2 \log \left(\frac{8 M}{\delta }\right)} + \sqrt{\frac{J\delta}{M}}.  \]
\end{lemma}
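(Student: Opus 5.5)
The plan is to split the weighted error into a bias part, a martingale part and an exceptional part, and to make the martingale increments bounded by truncating on a good event. Concretely, let $E_m$ be the event on which the three conditions hold at step $m$. Choose $E_m$ to be $\cF_{m-1}$-measurable, or more precisely take $G_m\in\cF_{m-1}$ to be the event that the conditional bias and conditional second-moment bounds hold; the bound $|\hat X_m|\le B$ is handled separately. The decomposition is
\begin{align*}
\sum_{m=1}^M \vartheta_m(\hat X_m - X_m) = \sum_{m=1}^M \vartheta_m\big(\EE[\hat X_m|\cF_{m-1}]-X_m\big) + \sum_{m=1}^M \vartheta_m\big(\hat X_m - \EE[\hat X_m|\cF_{m-1}]\big).
\end{align*}
On $\bigcap_m E_m$, the first sum is at most $\underline\varepsilon\le\varepsilon$ in absolute value, because $\vartheta\in\Delta_{[M]}$.

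For the second sum, I would introduce truncated increments $D_m = \vartheta_m(\tilde X_m - \EE[\tilde X_m|\cF_{m-1}])$, where $\tilde X_m = \hat X_m\mathds 1\{|\hat X_m|\le B\}$. Then $|\tilde X_m|\le B$, so each $D_m$ is a martingale difference lying in an interval of length at most $2\vartheta_m B\le 2\vartheta_m(B+\bar X)$. Azuma--Hoeffding then gives
\begin{align*}
\Big|\sum_m D_m\Big|\le (B+\bar X)\sqrt{2\textstyle\sum_m\vartheta_m^2\log(8M/\delta)}
\end{align*}
with probability $1-\delta/(4M)$. One could even afford $\delta/4$ here, but matching the stated log is easy. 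Note that $\bar X$ enters only through the crude range bound, which is also where it is needed if one instead centres at $X_m$.

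Next I would compare back to the untruncated sum. On $\bigcap_m E_m$ we have $\tilde X_m=\hat X_m$, so the only discrepancy is $\sum_m\vartheta_m\big(\EE[\hat X_m|\cF_{m-1}]-\EE[\tilde X_m|\cF_{m-1}]\big)=\sum_m\vartheta_m\EE[\hat X_m\mathds 1\{|\hat X_m|>B\}|\cF_{m-1}]$. I would bound this by Cauchy--Schwarz, using the second-moment condition $J$ (after writing $\hat X_m = (\hat X_m-X_m)+X_m$) together with the conditional probability of the bad event, which is of order $\delta/(4M)$. This yields a term of size $\sqrt{J\delta/M}$ once the weights are summed; this is precisely where the extra $\sqrt{J\delta/M}$ in the statement comes from.

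The final step is a union bound. The good events fail with total probability at most $M\cdot\delta/(4M)=\delta/4$, and the concentration step fails with probability at most $\delta/4$, so everything holds with probability $1-\delta/2$.

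The main obstacle is that the bad event is not $\cF_{m-1}$-measurable, so its conditional probability is not pointwise small and only the unconditional probability is controlled. I would first try to handle this by working with $\EE$ of the correction term and then applying Markov's inequality, absorbing the slack into constants. If that fails, I would redefine the truncation through the $\cF_{m-1}$-measurable events $G_m$ alone.
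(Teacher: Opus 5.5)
Your bias/martingale split and the Azuma--Hoeffding step for the truncated increments are fine. The overall architecture is essentially the paper's: truncate so that Azuma applies, pay for the truncation with Cauchy--Schwarz using $J$ and the tail probability $\delta/(4M)$, and union-bound over the good events. The paper truncates $Y_m=\hat X_m-X_m$ at level $B+\bar X$ rather than $\hat X_m$ at level $B$, which is cosmetic. The gap is exactly the step you flag, and neither fallback closes it. The correction $\sum_m\vartheta_m\mathbb{E}[\hat X_m\mathbf{1}\{|\hat X_m|>B\}\mid\mathcal{F}_{m-1}]$ must be small pathwise. That needs $p_m:=\mathbb{P}(|\hat X_m|>B\mid\mathcal{F}_{m-1})=O(\delta/M)$ pathwise, whereas the hypothesis only gives $\mathbb{E}[p_m]\le\delta/(4M)$. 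Your first fallback applies Markov to the expected correction, which is of order $\sqrt{J\delta/M}$ (and even that needs an unconditional second moment that is not assumed). At failure probability $\delta/4$ this yields a threshold of order $\sqrt{J/(\delta M)}$, so you lose a factor $1/\delta$, not a constant. Your second fallback truncates only on $G_m\in\mathcal{F}_{m-1}$. That leaves the increments unbounded, so Azuma is unavailable, and a conditional-variance (Chebyshev-type) bound again gives only order $\sqrt{J/(\delta M)}$.

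In fact the step cannot be completed, because the statement is false as written. The paper's proof has the same hole: it bounds only the unconditional mean $\mathbb{E}[Y'_m]$ and then uses that bound for $\sum_m\vartheta_m\mathbb{E}[Y'_m\mid\mathcal{F}_{m-1}]$ after invoking Azuma. For a counterexample, let $\delta\le 1/10$, $X_m\equiv 0$, $\vartheta_m=1/M$, and $H$ an event with $\mathbb{P}(H)=1/8$. Set $\hat X_m\equiv 0$ off $H$. On $H$ let $\hat X_1,\dots,\hat X_M$ be i.i.d.\ with $\hat X_m=L$ with probability $p=2\delta/M$ and $\hat X_m=-b$ otherwise, where $b=pL/(1-p)<L$. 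Take $\mathcal{F}_m=\sigma(H,\hat X_1,\dots,\hat X_m)$. Then $\mathbb{E}[\hat X_m\mid\mathcal{F}_{m-1}]=0$, $\mathbb{E}[\hat X_m^2\mid\mathcal{F}_{m-1}]\le J:=pL^2/(1-p)$, and $\mathbb{P}(|\hat X_m|>b)=p/8=\delta/(4M)$. So the hypotheses hold with $B=b$ and any $\underline\varepsilon>0$. Yet the event $H\cap\{\hat X_m\ne L\ \forall m\}$ has probability at least $(1-2\delta)/8>\delta/2$, and on it the weighted error equals $-b$. Meanwhile $\sqrt{J\delta/M}=b\sqrt{(1-p)/2}<b/\sqrt{2}$. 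With $\varepsilon=\underline\varepsilon\le b/10$ and $\bar X\le b$, the Azuma term is $o(b)$ as $M\to\infty$, so the claimed bound is violated for large $M$.

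The natural repair is to assume the tail bound conditionally, e.g.\ $p_m\le\delta/(4M)$ on the good event. Your conditional Cauchy--Schwarz then gives a pathwise correction of at most $\sqrt{J\delta/(4M)}+\bar X\delta/(4M)$. This fits inside the stated bound: the $\bar X$ part is covered by the slack between your Azuma coefficient $B$ and the stated $B+\bar X$. The rest of your argument then goes through.
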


\begin{proof}
Define a random sequence $\{Y_m\}_{m=0}^M$ through $Y_m = \hat X_m-X_m$ for all $m\in[M]$, and let $Y'_m=Y_m \mathds{1}_{\{\|Y_m\|_\infty \le B + \bar{X}\}}$ for all $m\in[M]$ and $s\in\Omega$. 
We aim to establish an upper bound for $\EE[Y'_m]$.
To this end,
observe first that
\begin{equation}
\begin{split}\label{eq-abs-Ym}
    \EE\Big[ \big|Y_m \mathds{1}_{\{\|Y_m\|_\infty {>} B+\bar{X}\}}\big|\Big] \le \sqrt{\EE[ \|Y_m\|^2] \cdot \EE\Big[\big|\mathds{1}_{\{\|Y_m\|_\infty {>} B+\bar{X}\}}\big|^2\Big] }
    \le \sqrt{\frac{J\delta}{{4}M}},
\end{split}
\end{equation}
where the first inequality follows from Cauchy-Schwarz inequality, and the second inequality holds because of the assumptions $\EE[ \|\hat X_m - X_m\|^2 \mid \cF_{m-1}] \le J$ together with  $\PP(\| \hat X_m \|_\infty \le B) \ge 1-\delta/({4}M)$.
Next, note that
\begin{equation}
\begin{split}\label{eq-Ym}
    \EE[Y_m]&=\EE\Big[ \big(\hat X_m - X_m \big)\mathds{1}_{\cbr{\|\EE[ \hat X_m | \mac F_{m-1}] - X_m \|_\infty \le \underline\varepsilon}} + \big(\hat X_m - X_m \big)\mathds{1}_{\cbr{\|\EE[ \hat X_m | \mac F_{m-1}] - X_m \|_\infty > \underline\varepsilon}}\Big] 
    \\&\le \EE\Big[ \big\|\EE[ \hat X_m | \mac F_{m-1}] - X_m \big\|_\infty \mathds{1}_{\cbr{\|\EE[ \hat X_m | \mac F_{m-1}] - X_m \|_\infty \le \underline\varepsilon}}\Big] 
    \\&\quad+ \sqrt{\EE\Big[\big\|\hat X_m  - X_m \big\|_\infty^2\Big] \PP\Big(\|\EE[ \hat X_m | \mac F_{m-1}] - X_m \|_\infty > \underline\varepsilon\Big) }
    \\&\le \underline\varepsilon+ \sqrt{\frac{J\delta}{{4}M}},
\end{split}
\end{equation}
where the first inequality holds by the law of iterated expectations and the Cauchy-Schwarz inequality, and the second inequality follows from the assumptions $\EE[ \|\hat X_m - X_m\|^2 \mid \cF_{m-1}] \le J$ together with  $\PP(\| \hat X_m \|_\infty \le B) \ge 1-\delta/({4}M)$.
We then have for every $m\in[M]$ that
\begin{align*}
\EE[Y'_m] =\EE \Big[  Y_m - Y_m \mathds{1}_{\{\|Y_m\|_\infty {>} B+\bar{X}\}}  \Big] 
\le \EE\Big[ Y_m \Big] +  \EE\Big[ \big|Y_m \mathds{1}_{\{\|Y_m\|_\infty {>} B+\bar{X}\}}\big|\Big] 
\le \underline\varepsilon + \sqrt{\frac{J\delta}{M}} ,
\end{align*}
where the second inequality follows by combining the previous two observations~\eqref{eq-abs-Ym} and~\eqref{eq-Ym}.
{Since the sequence $\{\vartheta_m (Y'_m-\EE[Y'_m|\cF_{m-1}])\}_{m\ge0}$ forms a martingale difference sequence with respect to $\{\cF_m\}_{m\ge0}$, we may then invoke Azuma's inequality~\citep[Proposition~2.20]{wainwright2019high} to obtain}
\begin{equation*}
\begin{split}
    \left|\sum_{m=1}^M \vartheta_m Y'_m\right| 
    &\le (B+\bar{X})\sqrt{2 \sum_{m=1}^M \vartheta_m^2 \log \left(\frac{{8}M}{\delta}\right)} + \underline\varepsilon + \sqrt{\frac{J\delta}{M}},
\end{split}
\end{equation*}
which holds with probability $1-\delta/{4}$.
Finally, since $\PP(\| \hat X_m \|_\infty \le B) \ge 1-\delta/({4}M)$, we obtain
\[\PP\rbr{\sum_{m=1}^M Y_m\ne \sum_{m=1}^M Y'_m} \le \frac{\delta}{{4}} .\]
Hence, the claim follows.
\end{proof}

\begin{lemma}[Pinsker's inequality {\citep[Lemma~2.5(i)]{tsybakov2008nonparametric}}]\label{lemma:pinsker}
Let $p,q\in\Delta_{[K]}$. We then have $2\mathsf{D}(p,q)\ge \|p-q\|_1^2 $.
\end{lemma}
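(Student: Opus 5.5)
We will prove Pinsker's inequality in the form $2\mathsf{D}(p,q)\ge\|p-q\|_1^2$ by first reducing to two-point distributions and then handling the binary case by a one-variable calculus argument. Without loss of generality assume $\mathsf{D}(p,q)<\infty$, i.e.\ $q_k=0$ implies $p_k=0$; otherwise there is nothing to show.

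\emph{Reduction to the binary case.} Let $A=\{k\in[K]: p_k\ge q_k\}$ and set $\bar p=p(A)=\sum_{k\in A}p_k$, $\bar q=q(A)$. Then
\[
\|p-q\|_1=\sum_{k\in A}(p_k-q_k)+\sum_{k\notin A}(q_k-p_k)=2(\bar p-\bar q).
\]
Applying the log-sum inequality separately on $A$ and on $A^c$ gives the data-processing bound
\[
\mathsf{D}(p,q)\ge \bar p\log\frac{\bar p}{\bar q}+(1-\bar p)\log\frac{1-\bar p}{1-\bar q}=:d(\bar p,\bar q),
\]
where $d(a,b)$ is the binary KL divergence. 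It therefore suffices to prove $d(a,b)\ge 2(a-b)^2$ for all $a,b\in[0,1]$, since then $2\mathsf{D}(p,q)\ge 4(\bar p-\bar q)^2=\|p-q\|_1^2$.

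\emph{Binary case.} Fix $a\in[0,1]$ and define $g(b)=d(a,b)-2(a-b)^2$ for $b\in(0,1)$; the endpoints follow by continuity or because $d=\infty$ there. Then $g(a)=0$, and
\[
g'(b)=-\frac{a}{b}+\frac{1-a}{1-b}+4(a-b)=(b-a)\Big(\frac{1}{b(1-b)}-4\Big).
\]
Since $b(1-b)\le 1/4$, the second factor is nonnegative. Hence $g'(b)\le 0$ for $b\le a$ and $g'(b)\ge 0$ for $b\ge a$, so $g$ attains its minimum at $b=a$ and $g\ge 0$ on $(0,1)$.

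\emph{Expected difficulty.} The only delicate step is the reduction via the log-sum inequality, in particular the bookkeeping for zero entries. This is handled by the conventions $0\log 0=0$ and $0\log(0/0)=0$, together with the finiteness assumption on $\mathsf{D}(p,q)$. The binary computation itself is routine.
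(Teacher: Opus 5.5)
Your proof is correct. The identity $\|p-q\|_1=2\bigl(p(A)-q(A)\bigr)$, the log-sum (data-processing) bound $\mathsf{D}(p,q)\ge d\bigl(p(A),q(A)\bigr)$, and the factorization $g'(b)=(b-a)\bigl(\frac{1}{b(1-b)}-4\bigr)$ all hold. The degenerate cases are covered by your continuity remark: these are $a\in\{0,1\}$, and $b\in\{0,1\}$, where finiteness forces $a=b$.

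The paper gives no proof of its own. It imports the inequality from Tsybakov's Lemma~2.5(i), where it is stated as $V(P,Q)\le\sqrt{K(P,Q)/2}$ with $V=\frac12\|p-q\|_1$. The argument there does not reduce to two-point distributions. Instead it combines the pointwise bound $\bigl(\frac43+\frac23x\bigr)\psi(x)\ge(x-1)^2$, where $\psi(x)=x\log x-x+1$ and $x\ge 0$, with Cauchy--Schwarz:
\[
\|p-q\|_1=\sum_{k:\,q_k>0}q_k\Bigl|\frac{p_k}{q_k}-1\Bigr|
\le\Bigl(\sum_{k:\,q_k>0}\bigl(\tfrac43q_k+\tfrac23p_k\bigr)\Bigr)^{1/2}\Bigl(\sum_{k:\,q_k>0}q_k\,\psi\bigl(\tfrac{p_k}{q_k}\bigr)\Bigr)^{1/2}
=\sqrt{2\,\mathsf{D}(p,q)}.
\]
Here $\mathsf{D}(p,q)<\infty$ gives $p_k=0$ whenever $q_k=0$, so the first factor equals $\sqrt2$ and the second equals $\sqrt{\mathsf{D}(p,q)}$.

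Your route is the standard textbook argument, and it makes the structure more transparent. Data processing isolates a one-variable problem, and the constant $2$ comes visibly from the curvature bound $b(1-b)\le\frac14$ of the binary divergence. That bound is tight as $a\to b\to\frac12$. The cited route needs no partition and no log-sum bookkeeping for zero entries, and it works verbatim for densities on arbitrary measurable spaces. Its cost is checking a less obvious scalar inequality. For the paper's purposes, where the lemma is only applied to weight vectors in the simplex over the $K$ generating kernels, either argument suffices.
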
